\def\calA{{\mathcal{A}}}
\def\calP{{\mathcal{P}}}
\def\calX{{\mathcal{X}}}
\def\Real{{\mathbb{R}}}
\definecolor{gray}{RGB}{128,128,128}
\newtheorem{assumption}{Assumption}
\newtheorem{theorem}{Theorem}
\newtheorem{lemma}{Lemma}
\newtheorem{remark}{Remark}
\newtheorem{corollary}{Corollary}
\DeclareMathOperator*{\argmin}{arg\,min}
\DeclareMathOperator{\NetReg}{Net-Reg}
\DeclareMathOperator{\col}{col}
\DeclareMathOperator{\inout}{in}
\DeclareMathOperator{\outin}{out}
\newcolumntype{M}[1]{>{\centering\arraybackslash}m{#1}}
\newcolumntype{N}{@{}m{0pt}@{}}
\title{\LARGE \bf Regret and Cumulative Constraint Violation Analysis for Distributed Online Constrained\\ Convex Optimization
}
\author{Xinlei Yi, Xiuxian Li, Tao Yang, Lihua Xie,\\ Tianyou Chai, and Karl H. Johansson%
	\thanks{This work was supported in part by the Knut and Alice Wallenberg Foundation, in part by the  Swedish Foundation for Strategic Research, in part by the Swedish Research Council, in part by the Ministry of Education of Republic of Singapore under Grant AcRF TIER 1- 2019-T1-001-088 (RG72/19), and in part by the National Natural Science Foundation of China under Grant 62003243, Grant 61991403, Grant 61991404, and Grant 61991400.}
	\thanks{X. Yi and K. H. Johansson are with the Division of Decision and Control Systems, School of Electrical Engineering and Computer Science, KTH Royal Institute of Technology, 10044 Stockholm, Sweden, and also with the Digital Futures, 10044 Stockholm, Sweden.
		{\tt\small \{xinleiy, kallej\}@kth.se}.}%
	\thanks{X. Li is with the Department of Control Science and Engineering, College of Electronics and Information Engineering, and the Shanghai Research Institute for Intelligent Autonomous Systems, Tongji University, Shanghai, P. R. China. {\tt\small xli@tongji.edu.cn}.}
	\thanks{T. Yang and T. Chai are with the State Key Laboratory of Synthetical Automation for Process Industries, Northeastern University, 110819 Shenyang, China. {\tt\small \{yangtao,tychai\}@mail.neu.edu.cn}.}
	\thanks{L. Xie is with School of Electrical and Electronic Engineering,
		Nanyang Technological University, 50 Nanyang Avenue, Singapore 639798. {\tt\small elhxie@ntu.edu.sg}.}
}
\begin{document}

\maketitle
\thispagestyle{plain}
\pagestyle{plain}

\begin{abstract}\label{online_op:Abstract}
This paper considers the distributed online convex optimization problem with time-varying constraints over a network of agents. This is a sequential decision making problem with two sequences of arbitrarily varying convex loss and constraint functions. At each round, each agent selects a decision from  the decision set, and then only a portion of the loss function and a coordinate block of the constraint function at this round are privately revealed to this agent. The goal of the network is to minimize the network-wide loss accumulated over time. Two distributed online algorithms with full-information and bandit feedback are proposed. Both dynamic and static network regret bounds are analyzed for the proposed algorithms, and network cumulative constraint violation is used to measure constraint violation, which excludes the situation that strictly feasible constraints can compensate the effects of violated constraints. In particular, we show that the proposed algorithms achieve $\mathcal{O}(T^{\max\{\kappa,1-\kappa\}})$ static network regret and $\mathcal{O}(T^{1-\kappa/2})$ network cumulative constraint violation, where $T$ is the time horizon and $\kappa\in(0,1)$ is a user-defined trade-off parameter. Moreover, if the loss functions are strongly convex, then the static network regret bound can be reduced to $\mathcal{O}(T^{\kappa})$.  Finally, numerical simulations are provided to illustrate the effectiveness of the theoretical results.

\emph{Index Terms}---Cumulative constraint violation, distributed optimization, online optimization, regret, time-varying constraints.
\end{abstract}
\section{INTRODUCTION}\label{online_opsec:intro}
Online convex optimization is a promising framework for machine learning and has wide applications such as online binary classification \cite{crammer2006online}, dictionary learning \cite{mairal2009online}, and online display advertising \cite{goldfarb2011online}. It can be traced back at least to the 1990's \cite{cesa1996worst,gentile1999linear,gordon1999regret}.
Simply speaking, online convex optimization is a sequential decision making problem with a sequence of arbitrarily varying convex loss functions. At each round, a decision maker selects a decision from the decision/constraint set and then the loss function at this round is revealed. The goal of the decision maker is to minimize the loss accumulated over time. For an online convex optimization algorithm, the standard performance metric is regret, which is the performance gap between the decision sequence induced by the algorithm and a benchmark in hindsight. If the benchmark is the optimal static (dynamic) decision sequence, then regret is called static (dynamic) regret.

Over the past decades, online convex optimization has been extensively studied, e.g.,
\cite{zinkevich2003online,hazan2007logarithmic,agarwal2010optimal,shalev2012online,yi2016tracking,mokhtari2016online,hazan2016introduction,shamir2017optimal,zhang2018adaptive,zhang2018dynamic,shames2019online,
	ijcai2020-731}. 
In these studies, the proposed algorithms usually are   projection-based and the results basically ensure that sublinear static regret can be achieved. For example, the projection-based online gradient descent algorithm proposed in \cite{zinkevich2003online} achieved an $\mathcal{O}(\sqrt{T})$ static regret bound, where $T$ is the time horizon. This is a tight bound up to constant factors \cite{hazan2007logarithmic}. The static regret bound can be reduced under more stringent strong convexity conditions on the loss functions \cite{hazan2007logarithmic,shalev2012online,mokhtari2016online,hazan2016introduction}. Note that the projection operator is performed at each round. It could yield heavy computation and/or storage burden when the constraint set is determined by a set of functional constraints. To tackle this challenge, online convex optimization with long term constraints was considered, e.g., \cite{mahdavi2012trading,jenatton2016adaptive,NIPS2018_7852,yu2020lowJMLR,yi2021regret}. In this new problem, the constraints are relaxed to be soft long term constraints. In other words, instead of requiring the constraints to be satisfied at each round, the constraints  should be satisfied in the long term on average. In addition to regret, the other performance metric in this case is constraint violation, which is the violation of the cumulative constraint functions. The problem can be further extended to a more general scenario where the constraint functions are time-varying and revealed to the decision maker after the decision is chosen, e.g.,  \cite{paternain2016online,sun2017safety,yu2017online,neely2017online,chen2017online,
	chen2018bandit,chen2018heterogeneous,cao2019online,pmlr-v97-liakopoulos19a,sadeghi2019online,wei2020online}.

Distributed optimization methods are becoming core aspects of various important applications in view of flexibility and scalability to large-scale datasets and systems, and from the perspective of data privacy and locality \cite{pmlr-v97-koloskova19a}. Motivated by this, a distributed variation of the classic online convex optimization was considered, e.g., \cite{tsianos2012distributed,
	mateos2014distributed,koppel2015saddle,hosseini2016online,
	akbari2017distributed,shahrampour2018distributed,akbari2018individual,zhang2019distributed,
	wan2020projection,carnevale2020distributed}.
In this setting, at each round the loss function is decomposed across a network of agents. Each agent selects a decision from  the decision set and then its own portion of the loss function, i.e., the local loss function, at this round is revealed to itself only. The goal of the network is to minimize the network-wide loss accumulated over time, and the performance metric for a distributed algorithm is hence measured by network regret, i.e., the average of all individual regrets. Each agent's individual regret is the difference between the cumulative global losses evaluated at this agent's decision sequence and a benchmark in hindsight. In order to avoid the potential computation and/or storage challenge caused by the projection operator when using projection-based algorithms, distributed online convex optimization with long term constraints was considered in \cite{yuan2017adaptive,yuan2021distributed,yuan2021distributedb}. For this problem, network constraint violation can be similarly defined and also is a performance metric. For example, in \cite{yuan2017adaptive}, an $\mathcal{O}(T^{0.5+\beta})$ static network regret bound and an $\mathcal{O}(T^{1-\beta/2})$ network constraint violation bound were achieved, where $\beta\in(0,0.5)$ is a user-defined parameter which enables the trade-off between these two bounds. 
In \cite{sharma2020distributed}, the above distributed online convex optimization with long term constraints was extended to a more general scenario where the constraint functions are time-varying and at each round only a coordinate block of the constraint function is privately revealed to each agent after its decision is chosen, and an $\mathcal{O}(T^{\max\{a,1-a+b\}}+T^aP_T)$ network regret bound and an $\mathcal{O}(\sqrt{T^{\max\{2-b,2+2b-2a\}}+T^{1+a-b}P_T})$ network constraint violation bound were achieved, where $a,b\in(0,1)$ with $a>b$ are user-defined trade-off parameters and $P_T\ge0$ is the path-length of the benchmark (see Theorem~\ref{online_op:corollaryreg}).
Other forms of distributed variation of the centralized online convex optimization have also been considered, e.g., \cite{raginsky2011decentralized,lee2016coordinate,lee2017stochastic,
	Li2018distributed,yi2020distributed,yi2019distributed,li2020distributed}.

It should be pointed out that the commonly used (network) constraint violation metrics have a potential drawback since they implicitly allow constraint violations at some rounds to be compensated by strictly feasible decisions at other rounds. In order to avoid this drawback, a more strict metric, cumulative constraint violation, i.e., the constraint violation accumulated over time, was considered in \cite{NIPS2018_7852,yi2021regret,yuan2021distributed,yuan2021distributedb}. For example, in \cite{NIPS2018_7852}, the centralized online convex optimization with long term constraints was considered, and an $\mathcal{O}(T^{\max\{\kappa,1-\kappa\}})$ static regret bound and an $\mathcal{O}(T^{1-\kappa/2})$ cumulative constraint violation bound were achieved, where $\kappa\in(0,1)$ is a user-defined trade-off parameter. These results were extended to the distributed setting with quadratic loss functions and linear constraint functions in \cite{yuan2021distributed}, and further to the distributed setting with arbitrary convex loss and constraint functions in \cite{yuan2021distributedb}.

In this paper, same as \cite{sharma2020distributed}, we study the general online constrained convex optimization problem. However, different from \cite{sharma2020distributed}, we adopt a more strict metric, network cumulative constraint violation. Moreover, we consider both full-information and bandit feedback scenarios. The full-information feedback means that the expressions or (sub)gradients of the  loss and constraint functions are revealed, while the bandit feedback means that only the values of the loss and constraint functions are revealed at the sampling instance.
We propose two distributed online algorithms to solve the problem, which have a good property that they do not use the time horizon or any other parameters related to the loss or constraint functions to design the algorithm parameters. We have the following contributions.
\begin{itemize}
	\item To the best of our knowledge, this paper is the first to consider cumulative constraint violation for distributed online convex optimization with time-varying constraints, see Remark~\ref{online_op:remark_problem} for more detailed explanations. 
	\item We show in Theorems~\ref{online_op:corollaryreg} and \ref{online_op:corollaryreg_bandit} that the proposed algorithms achieve an $\mathcal{O}(T^{\max\{\kappa,1-\kappa\}}+T^{\kappa}P_T)$ network regret bound and an $\mathcal{O}(T^{1-\kappa/2})$ network cumulative constraint violation bound. These bounds recover the results achieved by the centralized online algorithms proposed in \cite{zinkevich2003online,yi2016tracking,cao2019online}, and are smaller than the bounds achieved in \cite{sharma2020distributed} although the standard network constraint violation metric rather than the more strict metric was used in \cite{sharma2020distributed}, see Remarks~\ref{online_op:remark_dynamic} and \ref{online_op:remark_dynamic_bandit} for more detailed explanations.
	\item We show in Corollaries~\ref{online_op:theoremstatic} and \ref{online_op:theoremstatic_bandit} that the proposed algorithms achieve an $\mathcal{O}(T^{\max\{\kappa,1-\kappa\}})$ static network regret bound and an $\mathcal{O}(T^{1-\kappa/2})$ network cumulative constraint violation bound. These bounds generalize the results in \cite{agarwal2010optimal,shamir2017optimal,mahdavi2012trading,jenatton2016adaptive,
		NIPS2018_7852,sun2017safety,yuan2017adaptive,yuan2021distributed,yuan2021distributedb} to more general settings,  and also improve the results in \cite{yuan2017adaptive}, see Remarks~\ref{online_op:remark_static} and \ref{online_op:remark_static_bandit} for more detailed explanations.
	\item When the loss functions are strongly convex, we show in Theorems~\ref{online_op:corollaryreg_sc} and \ref{online_op:corollaryreg_sc_bandit} that the static network regret bound can be improved to $\mathcal{O}(T^{\kappa})$. This result generalizes the result in \cite{yuan2017adaptive} to more general settings, see Remark~\ref{online_op:remark_sc} for more detailed explanations. 
\end{itemize}

In conclusion, as explained at the end of Section~\ref{online_opsec:algorithm}, the results in this paper can be viewed as nontrivial extensions of existing results.
The detailed comparison of this paper to some of the related works is summarized in TABLE~\ref{online_op::table}\footnote{In this table, we do not list the dynamic regret since most of the listed works do not consider that.}.

\bgroup
\def\arraystretch{1.4}
\begin{table*}[ht!]
	\caption{Comparison of this paper to related works on online constrained convex optimization.}
	\label{online_op::table}
	\begin{center}
		\begin{small}
\begin{tabular}{M{1.1cm}|M{1.5cm}|M{1.8cm}|M{1.8cm}|M{2.8cm}|M{3.4cm}|M{2.2cm}N}
\hline
Reference&Problem type&Loss functions&Constraint functions&Static regret&Constraint violation&Cumulative constraint violation&\\

\hline
\cite{mahdavi2012trading}&Centralized&Convex&Convex, time-invariant&$\mathcal{O}(\sqrt{T})$& $\mathcal{O}(T^{3/4})$&Not given&\\

\hline
\multirow{2}{*}{\cite{NIPS2018_7852}}&\multirow{2}{*}{Centralized}&Convex
&\multirow{2}{*}{\parbox{1.8cm}{\centering Convex, time-invariant}}&$\mathcal{O}(T^{\max\{\kappa,1-\kappa\}})$& \multicolumn{2}{c}{$\mathcal{O}(T^{1-\kappa/2})$}&\\[7pt]
\cline{3-3}\cline{5-7}
&&Strongly convex&&$\mathcal{O}(\log(T))$&\multicolumn{2}{c}{$\mathcal{O}(\sqrt{\log(T)T})$}&\\

\hline
\cite{sun2017safety}&Centralized&Convex&Convex&$\mathcal{O}(\sqrt{T})$& $\mathcal{O}(T^{3/4})$&Not given&\\

\hline
\multirow{2}{*}{\cite{yuan2017adaptive}}&\multirow{2}{*}{Distributed}
&Convex&\multirow{2}{*}{\parbox{1.8cm}{\centering Convex, time-invariant}}&$\mathcal{O}(T^{\max\{0.5+\beta\}})$& $\mathcal{O}(T^{1-\beta/2})$&Not given&\\[7pt]
\cline{3-3}\cline{5-7}
&&Strongly convex&&$\mathcal{O}(T^{\kappa})$& $\mathcal{O}(T^{1-\kappa/2})$&Not given&\\

\hline
\cite{yuan2021distributed}&Distributed&Quadratic
&Linear, time-invariant&$\mathcal{O}(T^{\max\{\kappa,1-\kappa\}})$& \multicolumn{2}{c}{$\mathcal{O}(T^{1-\kappa/2})$}&\\

\hline
\multirow{2}{*}{\cite{yuan2021distributedb}}&\multirow{2}{*}{Distributed}&Convex
&\multirow{2}{*}{\parbox{1.8cm}{\centering Convex, time-invariant}}&$\mathcal{O}(T^{\max\{\kappa,1-\kappa\}})$& \multicolumn{2}{c}{$\mathcal{O}(T^{1-\kappa/2})$}&\\
\cline{3-3}\cline{5-7}
&&Strongly convex&&$\mathcal{O}(\log(T))$&\multicolumn{2}{c}{$\mathcal{O}(\sqrt{\log(T)T})$}&\\

\hline
\cite{sharma2020distributed}&Distributed&Convex&Convex&$\mathcal{O}(T^{\max\{a,1-a+b\}})$& $\mathcal{O}(T^{\max\{1-b/2,1+b-a\}})$&Not given&\\

\hline
\multirow{2}{*}{\parbox{1.1cm}{\centering This paper}}&\multirow{2}{*}{Distributed}&Convex
&\multirow{2}{*}{Convex}&$\mathcal{O}(T^{\max\{\kappa,1-\kappa\}})$& \multicolumn{2}{c}{\multirow{2}{*}{$\mathcal{O}(T^{1-\kappa/2})$}}&\\
\cline{3-3}\cline{5-5}
&&Strongly convex&&$\mathcal{O}(T^\kappa)$&\multicolumn{2}{c}{}&\\
\hline
\end{tabular}

\end{small}
\end{center}
\vskip -0.1in
\end{table*}
\egroup

The rest of this paper is organized as follows. Section~\ref{online_opsec:problem} formulates the considered problem. Sections~\ref{online_opsec:algorithm} and \ref{online_opsec:algorithm_bandit} provide distributed online algorithms with full-information and bandit feedback, respectively, and analyze their regret and cumulative constraint violation bounds. Section~\ref{online_opsec:simulation} gives simulation examples. Finally, Section~\ref{online_opsec:conclusion} concludes this paper.

\noindent {\bf Notations}: All inequalities and equalities throughout this paper are understood componentwise. $\mathbb{R}^p$ and $\mathbb{R}^p_+$ stand for the set of $p$-dimensional vectors and nonnegative vectors, respectively. $\mathbb{N}_+$ denotes the set of all positive integers. $[n]$ represents the set $\{1,\dots,n\}$ for any positive integer $n$. $\|\cdot\|$ ($\|\cdot\|_1$) stands for the Euclidean norm (1-norm) for vectors and the induced 2-norm (1-norm) for matrices. $\mathbb{B}^p$ and $\mathbb{S}^p$ are the unit ball and sphere centered around the origin in $\mathbb{R}^p$ under Euclidean norm, respectively. 
$x^\top$ denotes the transpose of a vector or a matrix. $\langle x,y\rangle$ represents the standard inner product of two vectors $x$ and $y$. ${\bf 0}_m$ (${\bf 1}_m$) denotes the column zero (one) vector with dimension $m$. $\col(z_1,\dots,z_n)$ is the concatenated column vector of $z_i\in\mathbb{R}^{m_i},~i\in[n]$. The notation $A\otimes B$ denotes the Kronecker product
of matrices $A$ and $B$. For a set $\mathbb{K}\subseteq\mathbb{R}^p$, $\calP_{\mathbb{K}}(\cdot)$ denotes the projection operator, i.e.,  $\calP_{\mathbb{K}}(x)=\argmin_{y\in\mathbb{K}}\|x-y\|^2,~\forall x\in\Real^{p}$. For simplicity, $[\cdot]_+$ is used to denote $\calP_{\mathbb{R}^p_+}(\cdot)$. For a scalar function $f:\mathbb{R}^p\rightarrow\mathbb{R}$, let $\partial f(x)\in\mathbb{R}^p$ denote the (sub)gradient of $f$ at $x$, and let $\partial [f(x)]_+$ denote the (sub)gradient of $[f]_+$ at $x$, i.e.,
\begin{align*}
	\partial [f(x)]_+=
	\begin{cases}
		\bm{0}_p, & \mbox{if } f(x)<0 \\
		\partial f(x), & \mbox{otherwise}.
	\end{cases}
\end{align*}
For a vector function $f=[f_1,\dots,f_d]^\top:\mathbb{R}^{p}\rightarrow\mathbb{R}^d$, its (sub)gradient at $x$ is written as $\partial f(x)=[\partial f_1(x),\dots,\partial f_d(x)]\in\mathbb{R}^{p\times d}$. Similarly, the (sub)gradient of $[f]_+$ at $x$ is written as $\partial [f(x)]_+=[\partial [f_1(x)]_+,\dots,\partial [f_d(x)]_+]\in\mathbb{R}^{p\times d}$.

\section{Problem Formulation}\label{online_opsec:problem}
In this section, we formulate the considered problem and provide the motivating examples at the end of this section.

We consider distributed online convex optimization with time-varying constraints. Specifically, consider a network of $n$ agents indexed by $i\in[n]$, which can communicate with each other according to a time-varying directed graph which will be described shortly. Let $\{f_{i,t}:\mathbb{X}\rightarrow \mathbb{R}\}$ and $\{g_{i,t}:\mathbb{X}\rightarrow \mathbb{R}^{m_i}\}$ be sequences of local convex loss and constraint functions over time $t=1,2,\dots$, respectively, where $\mathbb{X}\subseteq\mathbb{R}^p$ is a known convex set, $p$ and $m_i$ are positive integers, and $g_{i,t}\le{\bm 0}_{m_i}$ is the local constraint. At time $t$, each agent $i$ selects a decision $x_{i,t}\in \mathbb{X}$. After the selection, the agent receives full-information or bandit feedback about the loss function $f_{i,t}$ and constraint function $g_{i,t}$, which is held privately by this agent. 
The objective is to design distributed sequential decision selection algorithms such that the network-wide loss accumulated over time is minimized. Similar to \cite{yuan2017adaptive,yuan2021distributed,yuan2021distributedb,sharma2020distributed}, we use network regret and cumulative constraint violation to measure performance of such an algorithm. Specifically, network regret and cumulative constraint violation are defined as
\begin{align}\label{online_op:reg}
	\NetReg(\{x_{i,t}\},y_{[T]})
	:=&\frac{1}{n}\sum_{i=1}^{n}\sum_{t=1}^{T}f_t(x_{i,t})-\sum_{t=1}^{T}f_t(y_{t})
\end{align}
and
\begin{align}\label{online_op:regc}
	\frac{1}{n}\sum_{i=1}^n\sum_{t=1}^T\|[g_{t}(x_{i,t})]_+\|
\end{align}
respectively, where $T$ is the time horizon, $y_{[T]}=(y_{1},\dots,y_{T})$ is a  benchmark, and
\begin{align}
	f_t(x)&=\frac{1}{n}\sum_{j=1}^nf_{j,t}(x)\label{online_op:ft}\\
	g_t(x)&=\col(g_{1,t}(x),\dots,g_{n,t}(x))\label{online_op:gt}
\end{align}
are the global loss and constraint functions, respectively.

In the literature, there are two commonly used benchmarks. One is the optimal dynamic decision sequence $$x_{[T]}^*=(x^*_{1},\dots,x^*_{T}),$$ where $x_t^*\in\mathbb{X}$ denotes the minimizer of $f_t(x)$ constrained by $g_t(x)\le{\bm 0}_{m}$ with $m=\sum_{i=1}^{n}m_i$. In other words, $x_t^*$ is the best choice by knowing the functions $f_{t}$ and $g_{t}$ in advance.
In order to guarantee that the optimal dynamic decision sequence always exists, we assume that for any $T\in\mathbb{N}_+$, the set of all the feasible decision sequences
$$
\mathcal{X}_{T}=\{(x_1,\dots,x_T):~x_t\in \mathbb{X},~g_{t}(x_t)\le{\bf0}_{m},~
\forall t\in[T]\}
$$ is non-empty. In this case $\NetReg(\{x_{i,t}\},x_{[T]}^*)$ is called the dynamic network regret. Another benchmark is the optimal static decision sequence $$\check{x}^*_{[T]}=(\check{x}^*_T,\dots,\check{x}^*_T),$$ where $\check{x}^*_T\in\mathbb{X}$ denotes the minimizer of $\sum_{t=1}^{T}f_t(x)$ constrained by $g_t(x)\le{\bm 0}_{m},~\forall t\in[T]$.
Similar to above, in order to guarantee that the optimal static decision sequence always exists, we assume that for any $T\in\mathbb{N}_+$, the set of all the feasible static decision sequences
$$\check{\calX}_{T}=\{(x,\dots,x):~x\in \mathbb{X},~g_{t}(x)\le{\bf0}_{m},~\forall t\in[T]\}\subseteq\calX_{T}$$ is non-empty.
In this case $\NetReg(\{x_{i,t}\},\check{x}^*_{[T]})$ is called the static network regret. 
The network cumulative constraint violation $\frac{1}{n}\sum_{i=1}^n\sum_{t=1}^T\|[g_{t}(x_{i,t})]_+\|$ is more strict than the network constraint violation $\frac{1}{n}\sum_{i=1}^n\|[\sum_{t=1}^Tg_{t}(x_{i,t})]_+\|$ since
\begin{align*}
	\frac{1}{n}\sum_{i=1}^n\Big\|\Big[\sum_{t=1}^Tg_{t}(x_{i,t})\Big]_+\Big\|
	\le\frac{1}{n}\sum_{i=1}^n\Big\|\sum_{t=1}^T[g_{t}(x_{i,t})]_+\Big\|
	\le\frac{1}{n}\sum_{i=1}^n\sum_{t=1}^T\|[g_{t}(x_{i,t})]_+\|.
\end{align*}
For simplicity purposes, we use standard constraint violation metrics to refer the metrics that take the summation over rounds before the projection, such as the network constraint violation $\frac{1}{n}\sum_{i=1}^n\|[\sum_{t=1}^Tg_{t}(x_{i,t})]_+\|$.
The standard constraint violation metrics are commonly used in the literature, e.g., \cite{mahdavi2012trading,jenatton2016adaptive,sun2017safety,chen2017online,
	neely2017online,yu2017online,chen2018heterogeneous,chen2018bandit,cao2019online,
	yuan2017adaptive,sharma2020distributed}, but have the drawback that they implicitly allow constraint violations at some rounds to be compensated by strictly feasible decisions at other rounds. It is straightforward to see that the network cumulative constraint violation $\frac{1}{n}\sum_{i=1}^n\sum_{t=1}^T\|[g_{t}(x_{i,t})]_+\|$ does not have such a drawback.

Note that each agent alone cannot compute network regret and cumulative constraint violation since it does not know other agents' local loss and constraint functions. Agents can use a consensus protocol to collaborate. Therefore, agents need to communicate with each other. We assume that agents are allowed to share their decisions through a communication network modeled by a time-varying directed graph. Specifically, let $\mathcal{G}_t=(\mathcal{V},\mathcal{E}_t)$ denote the directed graph at the $t$-th round, where $\mathcal{V}=[n]$ is the agent set and $\mathcal{E}_t\subseteq\mathcal{V}\times\mathcal{V}$ the edge set. A directed edge $(j,i)\in\mathcal{E}_t$ means that agent $i$ can receive data from agent $j$ at the $t$-th round. Let $\mathcal{N}^{\inout}_i(\mathcal{G}_t)=\{j\in [n]\mid (j,i)\in\mathcal{E}_t\}$ and $\mathcal{N}^{\outin}_i(\mathcal{G}_t)=\{j\in [n]\mid (i,j)\in\mathcal{E}_t\}$ be the sets of in- and out-neighbors, respectively, of agent $i$ at the $t$-th round. A directed path is a sequence of consecutive directed edges. A  directed graph is said to be strongly connected if there is at least one directed path
from any agent to any other agent in the graph. The adjacency (mixing) matrix $W_t\in\mathbb{R}^{n\times n}$  fulfills $[W_t]_{ij}>0$ if $(j,i)\in\mathcal{E}_t$ or $i=j$, and $[W_t]_{ij}=0$ otherwise.

We make the following standing assumption on the loss and constraint functions.
\begin{assumption}\label{online_op:assfunction}
	\begin{enumerate}
		\item The set $\mathbb{X}$ is convex and closed. Moreover, it contains the ball of radius $r(\mathbb{X})$ centered at the origin and is contained in the ball of radius $R(\mathbb{X})$, i.e.,
		\begin{align}
			r(\mathbb{X})\mathbb{B}^p\subseteq\mathbb{X}\subseteq R(\mathbb{X})\mathbb{B}^p.\label{online_op:domainupper}
		\end{align}
		\item For any $i\in[n],~t\in\mathbb{N}_+$, the functions $f_{i,t}$ and $g_{i,t}$ are convex.  Moreover, there exists a constant $F_1$ such that
		\begin{subequations}\label{online_op:ftgtupper}
			\begin{align}
				&|f_{i,t}(x)-f_{i,t}(y)|\le F_1,\label{online_op:ftgtupper-a}\\
				&\|g_{i,t}(x)\|\le F_1,~\forall i\in[n],~t\in\mathbb{N}_+,~ x,y\in\mathbb{X}.\label{online_op:ftgtupper-b}
			\end{align}
		\end{subequations}
		\item For any $i\in[n],~t\in\mathbb{N}_+,~x\in\mathbb{X}$, the subgradients $\partial f_{i,t}(x)$ and $\partial g_{i,t}(x)$ exist. Moreover, there exists a constant $F_2$ such that
		\begin{subequations}\label{online_op:subgupper}
			\begin{align}
				&\|\partial f_{i,t}(x)\|\le F_2,\label{online_op:subgupper-a}\\
				& \|\partial g_{i,t}(x)\|\le F_2,~\forall i\in[n],~t\in\mathbb{N}_+,~ x\in\mathbb{X}.\label{online_op:subgupper-b}
			\end{align}
		\end{subequations}
	\end{enumerate}
\end{assumption}

The following assumption is made on the graph.
\begin{assumption}\label{online_op:assgraph}
	For any $t\in\mathbb{N}_+$, the directed graph $\mathcal{G}_t$ satisfies the following conditions:
	\begin{enumerate}[label=(\alph*)]
		\item There exists a constant $w\in(0,1)$, such that $[W_t]_{ij}\ge w$ if $[W_t]_{ij}>0$.
		\item The mixing matrix $W_t$ is doubly stochastic, i.e., $\sum_{i=1}^n[W_t]_{ij}=\sum_{j=1}^n[W_t]_{ij}=1,~\forall i,j\in[n]$.
		\item There exists an integer $B>0$ such that the directed graph $(\mathcal{V},\cup_{l=0}^{ B -1}\mathcal{E}_{t+l})$ is strongly connected.
	\end{enumerate}
\end{assumption}

\begin{remark}\label{online_op:remark_problem}
	To the best of our knowledge, this paper is the first to consider cumulative constraint violation for distributed online convex optimization with time-varying constraints.
	The problem considered in this paper is a distributed variation of the centralized online convex optimization with time-varying constraints considered in \cite{sun2017safety,chen2017online,yu2017online,neely2017online,chen2018heterogeneous,chen2018bandit,cao2019online}. The same distributed online constrained convex optimization problem had also been considered in \cite{sharma2020distributed}, but in \cite{sharma2020distributed} the standard  network constraint violation metric was used.
	It should be pointed out that the considered problem is more general than the distributed problems considered in \cite{yuan2017adaptive,yuan2021distributed,yuan2021distributedb}. Specifically, in \cite{yuan2017adaptive,yuan2021distributedb} the global constraint function is time-invariant and known by each agent in advance, and in \cite{yuan2017adaptive} the standard network constraint violation metric was used. In \cite{yuan2021distributed}, the local loss functions are quadratic and the global constraint function is time-invariant, linear, and known by each agent in advance.
	It should also be highlighted that the considered problem in this paper and the distributed online optimization with time-varying coupled inequality constraints considered in \cite{yi2020distributed,yi2019distributed} are different kinds of distributed problems. Specifically, in \cite{yi2020distributed,yi2019distributed} at the $t$-th round the global loss function is $\sum_{i=1}^{n}f_{i,t}(x_i)$ and the constraints are $\sum_{i=1}^{n}g_{i,t}(x_i)\le{\bm 0}_{m}$, where $x_i\in\mathbb{R}^{p_i}$ with $p_i$ being a positive integer. Therefore, the algorithms proposed in \cite{yi2020distributed,yi2019distributed} cannot be used to solve the problem considered in this paper. Moreover, the standard constraint violation metric was used in \cite{yi2020distributed,yi2019distributed} and it is unclear how to extend the results to the more strict constraint violation metric.
\end{remark}

Noting that the problem considered in this paper incorporates the problems considered in \cite{sun2017safety,chen2017online,yu2017online,neely2017online,chen2018heterogeneous,
	chen2018bandit,cao2019online,yuan2017adaptive,yuan2021distributed,yuan2021distributedb}, the examples studied in these papers, such as online job scheduling \cite{yu2017online}, online network resource allocation \cite{chen2017online}, mobile fog computing in IoT \cite{chen2018bandit}, online linear regressions \cite{yuan2021distributed}, and online spam filtering task \cite{yuan2021distributedb}, motivate this paper. We omit the details of these motivating examples due to space limitations. In the simulations we use the distributed online linear regression problem with time-varying linear inequality constraints as an example to compare the performance of different algorithms.

\section{Distributed Online Algorithm with Full-Information Feedback}\label{online_opsec:algorithm}
In this section, we  consider  the distributed online constrained convex optimization problem formulated in Section \ref{online_opsec:problem} with full-information feedback. We first propose a distributed online algorithm, and then derive network regret and cumulative constraint violation bounds for this algorithm.

\subsection{Algorithm Description}

Recall that at the $t$-th round, the global loss and constraint functions are given in \eqref{online_op:ft} and \eqref{online_op:gt}, respectively. The associated regularized convex-concave function is
\begin{align*}
	\calA_{t}(x_t,q_t):=f_{t}(x_t)
	+q_t^\top [g_t(x_t)]_+-\frac{\beta_{t+1}}{2}\|q_t\|^2,
\end{align*}
where $x_t\in\mathbb{X}$ and $q_t\in\mathbb{R}^m_+$ represent the primal and dual variables, respectively, and $\beta_{t+1}$ is the regularization parameter. Here, the clipped constraint function $[g_t(x_t)]_+$ is used, which is essential for analyzing cumulative constraint violation. The primal and dual variables can be updated by the standard primal--dual algorithm
\begin{align}
	x_{t+1}&=\calP_\mathbb{X}\Big(x_t-\alpha_{t+1}\frac{\partial \calA_{t}(x_t,q_t)}{\partial x}\Big)
	=\calP_\mathbb{X}(x_t-\alpha_{t+1}\omega_{t+1}),\label{online_op:al_c_x}\\
	q_{t+1}&=\Big[q_t+\gamma_{t+1}\frac{\partial \calA_{t}(x_t,q_t)}{\partial q}\Big]_+
	=[(1-\beta_{t+1}\gamma_{t+1})q_t+\gamma_{t+1}[g_t(x_t)]_+]_+,\label{online_op:al_c_q}
\end{align}
where $\alpha_{t+1}>0$ and $\gamma_{t+1}>0$ are the stepsizes used in the primal and dual updates, respectively, and
\begin{align*}
	\omega_{t+1}=\frac{1}{n}\sum_{i=1}^{n}\partial f_{i,t}(x_{t})+\partial [g_{t}(x_{t})]_+ q_{t}.
\end{align*}

We then modify the centralized algorithm \eqref{online_op:al_c_x}--\eqref{online_op:al_c_q} to a distributed manner. We use $x_{i,t}$ to denote the local copy of the primal variable $x_t$ and rewrite the dual variable in an agent-wise manner, i.e.,
$q_t=\col(q_{1,t},\dots,q_{n,t})$
with each $q_{i,t}\in\mathbb{R}^{m_i}$. Then, for each agent $i$, $z_{i,t+1}$ computed by the consensus protocol \eqref{online_op:al_z} is used to track the average estimation $\frac{1}{n}\sum_{i=1}^nx_{i,t}$ and thus can be used to estimate $x_{t}$. Moreover, $\omega_{i,t+1}$ defined in \eqref{online_op:al_bigomega} can be understood as a part of $\omega_{t+1}$ that is available to agent $i$. In this case,  each $x_{i,t+1}$ is updated by \eqref{online_op:al_x} which is similar to the updating rule \eqref{online_op:al_c_x}, and
the updating rule \eqref{online_op:al_c_q} can be executed in an agent-wise manner as
\begin{align}
	q_{i,t+1}=[(1-\beta_{t+1}\gamma_{t+1})q_{i,t}+\gamma_{t+1}[g_{i,t}(x_{i,t})]_+]_+.
	\label{online_op:al_c_q2}
\end{align}
In order to avoid using the upper bounds of the loss and constraint functions and their subgradients to design the algorithm parameters $\alpha_t$, $\beta_t$, and $\gamma_t$, inspired by the algorithms proposed in \cite{yu2017online,yi2020distributed,neely2017online}, we slightly modify the dual updating rule \eqref{online_op:al_c_q2} as \eqref{online_op:al_q}.
As a result, the updating rule \eqref{online_op:al_c_x}--\eqref{online_op:al_c_q}  can be executed in a distributed manner, which is given in pseudo-code as Algorithm~\ref{online_op:algorithm}.

\begin{algorithm}
	\caption{Distributed Online Algorithm with Full-Information Feedback}
	\begin{algorithmic}\label{online_op:algorithm}
		\STATE \textbf{Input}:   non-increasing and positive sequences $\{\alpha_t\}$, $\{\beta_t\}$ and $\{\gamma_t\}$.
		\STATE \textbf{Initialize}:  $x_{i,1}\in\mathbb{X}$ and $q_{i,1}={\bm 0}_{m_i}$ for all $i\in[n]$.
		\FOR{$t=1,\dots$}
		\FOR{$i=1,\dots,n$  in parallel}
		\STATE  Broadcast $x_{i,t}$ to $\mathcal{N}^{\outin}_i(\mathcal{G}_{t})$ and receive $x_{j,t}$ from $j\in\mathcal{N}^{\inout}_i(\mathcal{G}_{t})$;
		\STATE  Observe $\partial f_{i,t}(x_{i,t})$, $\partial g_{i,t}(x_{i,t})$, and $g_{i,t}(x_{i,t})$;
		\STATE  Update \begin{align}
			z_{i,t+1}&=\sum_{j=1}^n[W_{t}]_{ij}x_{j,t},\label{online_op:al_z}\\
			\omega_{i,t+1}&=\partial f_{i,t}(x_{i,t})+\partial [g_{i,t}(x_{i,t})]_+ q_{i,t},\label{online_op:al_bigomega}\\
			x_{i,t+1}&=\calP_\mathbb{X}(z_{i,t+1}-\alpha_{t+1}\omega_{i,t+1}),\label{online_op:al_x}\\
			q_{i,t+1}&=[(1-\beta_{t+1}\gamma_{t+1})q_{i,t}+\gamma_{t+1}([g_{i,t}(x_{i,t})]_+
			+(\partial [g_{i,t}(x_{i,t})]_+)^\top(x_{i,t+1}-x_{i,t}))]_{+}.\label{online_op:al_q}
		\end{align}
		\ENDFOR
		\ENDFOR
		\STATE  \textbf{Output}: $\{x_{i,t}\}$.
	\end{algorithmic}
\end{algorithm}

\begin{remark}
	Algorithm~\ref{online_op:algorithm} can be recognized as the distributed variant of the centralized algorithm proposed in \cite{yu2017online,neely2017online}. It is different from the distributed algorithm proposed in \cite{yi2020distributed} since they are designed for solving different problems as explained in Remark~\ref{online_op:remark_problem}. One obvious difference is that in our Algorithm~\ref{online_op:algorithm} the local primal variables are communicated between agents, while in the algorithm proposed in \cite{yi2020distributed} the local dual variables are communicated between agents. Therefore, the proofs are also different when analyzing their performance.
\end{remark}

\subsection{Performance Analysis}
This section analyzes network regret and cumulative constraint violation bounds for Algorithm~\ref{online_op:algorithm}.

We first characterize dynamic network regret and cumulative constraint violation bounds based on some natural vanishing stepsize sequences in the following theorem.

\begin{theorem}\label{online_op:corollaryreg}
	Suppose Assumptions~\ref{online_op:assfunction}--\ref{online_op:assgraph} hold. For all $i\in[n]$, let $\{x_{i,t}\}$ be the sequences generated by Algorithm~\ref{online_op:algorithm} with
	\begin{align}\label{online_op:stepsize1}
		\alpha_t=\frac{\alpha_0}{t^{\kappa}},~\beta_t=\frac{1}{t^\kappa},
		~\gamma_t=\frac{1}{t^{1-k}},~\forall t\in\mathbb{N}_+,
	\end{align} where $\alpha_0>0$ and $\kappa\in(0,1)$. Then, for any $T\in\mathbb{N}_+$ and any benchmark $y_{[T]}\in\calX_{T}$,
	\begin{align}
		&\NetReg(\{x_{i,t}\},y_{[T]})
		=\mathcal{O}\Big(\alpha_0 T^{1-\kappa}+\frac{T^\kappa(1+P_T)}{\alpha_0}\Big),\label{online_op:corollaryregequ1}\\
		&\frac{1}{n}\sum_{i=1}^n\sum_{t=1}^T\|[g_{t}(x_{i,t})]_+\|=
		\mathcal{O}(\sqrt{(\alpha_0+1)T^{2-\kappa}}),\label{online_op:corollaryconsequ}
	\end{align}
	where $P_T=\sum_{t=1}^{T-1}\|y_{t+1}-y_{t}\|$ is the path-length of the benchmark $y_{[T]}$.
\end{theorem}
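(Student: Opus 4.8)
The plan is to run a coupled primal--dual drift analysis in which the consensus step, the primal step, and the dual step are each turned into a one-round inequality and then telescoped. Fix a feasible comparator $y_t$ (so that $[g_{i,t}(y_t)]_+={\bm 0}_{m_i}$) and apply non-expansiveness of $\calP_\mathbb{X}$ to \eqref{online_op:al_x}. Since $z_{i,t+1}=\sum_j[W_t]_{ij}x_{j,t}\in\mathbb{X}$, this gives, for each agent,
\begin{align*}
2\alpha_{t+1}\langle\omega_{i,t+1},z_{i,t+1}-y_t\rangle\le\|z_{i,t+1}-y_t\|^2-\|x_{i,t+1}-y_t\|^2+\alpha_{t+1}^2\|\omega_{i,t+1}\|^2.
\end{align*}
Writing $z_{i,t+1}-y_t=(x_{i,t}-y_t)+(z_{i,t+1}-x_{i,t})$, using convexity of $f_{i,t}$ and of $[g_{i,t}]_+$ together with $q_{i,t}\ge{\bm 0}$, the inner product lower-bounds the Lagrangian gap $L_{i,t}:=f_{i,t}(x_{i,t})-f_{i,t}(y_t)+\langle q_{i,t},[g_{i,t}(x_{i,t})]_+\rangle$, while $\langle\omega_{i,t+1},z_{i,t+1}-x_{i,t}\rangle$ is a pure consensus-disagreement remainder. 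Summing over $i$ and using double stochasticity of $W_t$ (so that $\sum_i\|z_{i,t+1}-y_t\|^2\le\sum_i\|x_{i,t}-y_t\|^2$ and $\bar z_{t+1}=\bar x_t$, with $\bar x_t:=\tfrac1n\sum_i x_{i,t}$) yields a telescopable master inequality for $\sum_i L_{i,t}$, driven by $\sum_i\alpha_{t+1}^2\|\omega_{i,t+1}\|^2$ and the disagreement remainder.

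For the regret I would exploit that the penalty $\langle q_{i,t},[g_{i,t}(x_{i,t})]_+\rangle\ge0$ can simply be dropped, so $f_{i,t}(x_{i,t})-f_{i,t}(y_t)\le L_{i,t}\le\text{RHS}$. Two auxiliary estimates then enter. First, a \emph{consensus bound}: by Assumption~\ref{online_op:assgraph} the product of doubly stochastic matrices contracts geometrically over every $B$ steps, so $\|x_{i,t}-\bar x_t\|$ is controlled by a geometric convolution of the past increments $\alpha_{s+1}\|\omega_{i,s+1}\|$, giving $\sum_{t=1}^T\sum_i\|x_{i,t}-\bar x_t\|=\mathcal{O}(\alpha_0 T^{1-\kappa})$. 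Second, a \emph{uniform dual bound} $\sup_{i,t}\|q_{i,t}\|=\mathcal{O}(1)$, proved by induction from the scalar recursion obtained from \eqref{online_op:al_q}, where $\beta_{t+1}\gamma_{t+1}=(t+1)^{-1}$ supplies the contraction and the inputs $\gamma_{t+1}\|h_{i,t}\|$ are summably small; this makes $\|\omega_{i,t+1}\|=\mathcal{O}(1)$. These also bridge local to network loss: by $F_2$-Lipschitzness both $\tfrac1n\sum_i f_{i,t}(x_{i,t})$ and $\tfrac1n\sum_i f_t(x_{i,t})$ lie within $\tfrac{F_2}{n}\sum_i\|x_{i,t}-\bar x_t\|$ of $f_t(\bar x_t)$, while $\tfrac1n\sum_i f_{i,t}(y_t)=f_t(y_t)$ exactly, so $\NetReg(\{x_{i,t}\},y_{[T]})$ exceeds the summed algorithmic quantity only by the consensus total above.

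It then remains to assemble and telescope. Dividing the master inequality by $2\alpha_{t+1}$ and summing, the comparator terms are treated by Abel summation: since $\{\alpha_t\}$ is non-increasing and $\|x_{i,t+1}-y_t\|^2\ge\|x_{i,t+1}-y_{t+1}\|^2-4R(\mathbb{X})\|y_{t+1}-y_t\|$,
\begin{align*}
\sum_{t=1}^T\frac{\|x_{i,t}-y_t\|^2-\|x_{i,t+1}-y_t\|^2}{2\alpha_{t+1}}\le\frac{(2R(\mathbb{X}))^2}{2\alpha_{T+1}}+\frac{2R(\mathbb{X})}{\alpha_{T+1}}P_T=\mathcal{O}\Big(\frac{T^\kappa(1+P_T)}{\alpha_0}\Big),
\end{align*}
while the gradient and disagreement terms contribute $\mathcal{O}(\alpha_0 T^{1-\kappa})$; using $\sum_{t=1}^T t^{-a}=\mathcal{O}(T^{1-a})$ and the stepsizes $\alpha_t=\alpha_0 t^{-\kappa},\beta_t=t^{-\kappa},\gamma_t=t^{-(1-\kappa)}$ gives \eqref{online_op:corollaryregequ1}. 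For \eqref{online_op:corollaryconsequ}, squaring \eqref{online_op:al_q} with $\|[\cdot]_+\|\le\|\cdot\|$ yields a dual-drift inequality in which the first-order correction in $h_{i,t}$ aligns the penalty with the clipped constraint at the updated iterate; combined with the regularization $\beta_{t+1}\|q_{i,t}\|^2$ and the uniform dual bound, this controls $\sum_t\|[g_t(x_{i,t})]_+\|^2=\mathcal{O}((\alpha_0+1)T^{1-\kappa})$ (after converting the agentwise blocks into the stacked $g_t$ via the consensus bound), and Cauchy--Schwarz gives $\sum_t\|[g_t(x_{i,t})]_+\|\le\sqrt{T\sum_t\|[g_t(x_{i,t})]_+\|^2}=\mathcal{O}(\sqrt{(\alpha_0+1)T^{2-\kappa}})$.

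The hard part will be the tight, uniform control of the dual magnitude. The penalty vector $h_{i,t}$ contains the primal increment $x_{i,t+1}-x_{i,t}$, which itself carries the term $\alpha_{t+1}\partial[g_{i,t}(x_{i,t})]_+q_{i,t}$, so the dual recursion feeds back on itself through the primal step; establishing $\sup_{i,t}\|q_{i,t}\|=\mathcal{O}(1)$ — so that the $\alpha_{t+1}\|q_{i,t}\|^2$ contributions to $\|\omega_{i,t+1}\|^2$ stay of order $\alpha_0 T^{1-\kappa}$ and do not overwhelm the $\beta_{t+1}\|q_{i,t}\|^2$ regularization — is the delicate induction on which every subsequent order estimate, including the $\mathcal{O}(T^{1-\kappa})$ bound on the accumulated squared violation, ultimately rests.
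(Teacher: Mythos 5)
There is a genuine gap, and it sits exactly where you say the whole argument rests: the claimed uniform dual bound $\sup_{i,t}\|q_{i,t}\|=\mathcal{O}(1)$ is false under the stepsizes \eqref{online_op:stepsize1}. From \eqref{online_op:al_q} and nonexpansiveness of $[\cdot]_+$ the best one-step estimate is
\begin{align*}
\|q_{i,t+1}\|\le\Big(1-\frac{1}{t+1}\Big)\|q_{i,t}\|+\frac{C}{(t+1)^{1-\kappa}},
\end{align*}
since $\beta_{t+1}\gamma_{t+1}=(t+1)^{-1}$ while the input has size $\gamma_{t+1}\cdot\mathcal{O}(1)$. The inputs $(t+1)^{-(1-\kappa)}$ are \emph{not} summable for $\kappa\in(0,1)$, and the contraction $1/(t+1)$ is too weak: multiplying by $(t+1)$ and telescoping gives only $\|q_{i,T}\|=\mathcal{O}(T^{\kappa})$, and this order is actually attained --- if $[g_{i,t}(x_{i,t})]_+$ stays bounded away from zero the recursion equilibrates at $\|q_{i,t}\|\approx c/\beta_t\sim t^{\kappa}$. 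The paper's induction proves precisely $\|\beta_t q_{i,t}\|\le F_1$, i.e.\ $\|q_{i,t}\|\le F_1 t^{\kappa}$, not $\mathcal{O}(1)$. Your downstream estimates inherit the error: with $\|q_{i,t}\|\sim t^{\kappa}$ one has $\|\omega_{i,t+1}\|=\mathcal{O}(t^{\kappa})$, so your gradient contribution $\sum_{t=1}^T\frac{\alpha_{t+1}}{2}\|\omega_{i,t+1}\|^2=\mathcal{O}(\alpha_0 T^{1+\kappa})$ is superlinear and destroys \eqref{online_op:corollaryregequ1}; likewise your consensus total $\mathcal{O}(\alpha_0 T^{1-\kappa})$, which presumes increments $\alpha_{s+1}\|\omega_{i,s+1}\|=\mathcal{O}(\alpha_0 s^{-\kappa})$, degrades to $\mathcal{O}(\alpha_0 T)$.

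The paper succeeds precisely because it never bounds $\|\omega_{i,t+1}\|$ and hence never needs a uniform dual bound; two structural devices replace it. First, the coupling term $q_{i,t}^\top b_{i,t+1}$, with $b_{i,t+1}=[g_{i,t}(x_{i,t})]_++(\partial[g_{i,t}(x_{i,t})]_+)^\top(x_{i,t+1}-x_{i,t})$ (this is exactly why the first-order correction is built into \eqref{online_op:al_q}), appears with opposite signs in the per-round primal inequality \eqref{online_op:lemma_regretdeltaequ} of Lemma~\ref{online_op:lemma_regretdelta} and in the dual-drift inequality \eqref{online_op:gvirtualnorm} of Lemma~\ref{online_op:lemma_virtualbound}, so the potentially $t^{\kappa}$-sized terms cancel; what survives is $\gamma_t\|b_{i,t}-\beta_t q_{i,t-1}\|^2=\mathcal{O}(\gamma_t)$, which uses the induction $\|\beta_t q_{i,t}\|\le F_1$ with the matching $\beta_t$ factor. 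Second, the projection residual enters with a negative sign, $-\frac{1}{2\alpha_{t+1}}\|\epsilon^x_{i,t}\|^2$, and all consensus and primal-increment cross terms are absorbed against it by Young's inequality, giving $\mathcal{O}(\alpha_t)$ per round. Repairing your proof requires abandoning the $\mathcal{O}(1)$ claim and reproducing this cancellation-plus-absorption structure. Separately (and now moot), your violation bound routes through $\sum_{t}\|[g_t(x_{i,t})]_+\|^2=\mathcal{O}((\alpha_0+1)T^{1-\kappa})$ plus Cauchy--Schwarz; the paper instead optimizes a static dual comparator $\mu_{ij}^0$ to bound $\|\sum_{t=1}^T[g_{i,t}(x_{j,t})]_+\|^2$ and then exploits componentwise nonnegativity via $\sum_{t=1}^T\|[g_t]_+\|\le\|\sum_{t=1}^T[g_t]_+\|_1\le\sqrt{m}\,\|\sum_{t=1}^T[g_t]_+\|$, and it is not clear your stronger sum-of-squares claim is obtainable by these techniques at all.
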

\begin{proof}
	The explicit expressions of the right-hand sides of \eqref{online_op:corollaryregequ1}--\eqref{online_op:corollaryconsequ}, and the proof are given in  Appendix~\ref{online_op:corollaryregproof}.
\end{proof}

\begin{remark}\label{online_op:remark_dynamic}
	It should be pointed out that the sequences in \eqref{online_op:stepsize1} do not use the time horizon $T$ or any other parameters related to the loss or constraint functions. The intuition of designing the sequences in \eqref{online_op:stepsize1} is to make the network regret and cumulative constraint violation bounds provided in Lemma~\ref{online_op:theoremreg} in  Appendix~\ref{online_op:corollaryregproof} as small as possible. The idea is original to \cite{yi2020distributed} and has also been used in \cite{sharma2020distributed,yi2019distributed}.
	The omitted constants in the right-hand sides of \eqref{online_op:corollaryregequ1}--\eqref{online_op:corollaryconsequ} depend on the user-defined trade-off parameter $\kappa$, the number of agents $n$, the constants related to the loss and constraint functions as assumed in Assumption~\ref{online_op:assfunction}, and the constants related to the communication network connectivity as assumed in Assumption~\ref{online_op:assgraph}. Theorem~\ref{online_op:corollaryreg} shows that Algorithm~\ref{online_op:algorithm} achieves improved performance compared with the dynamic network regret bound $\mathcal{O}(T^{\max\{a,1-a+b\}}+T^aP_T)$ and the standard network constraint violation bound $\mathcal{O}(\sqrt{T^{\max\{2-b,2+2b-2a\}}+T^{1+a-b}P_T})$ achieved by the distributed online algorithm proposed in \cite{sharma2020distributed}, where $a,b\in(0,1)$ and $a>b$.
	If setting $\alpha_0=1$ and  $\kappa=0.5$ in Theorem~\ref{online_op:corollaryreg}, the dynamic regret bound $\mathcal{O}(\sqrt{T}(1+P_T))$ for centralized online convex optimization achieved in \cite{zinkevich2003online} is recovered. If the path-length $P_T$ is known in advance, then setting $\alpha_0=\sqrt{1+P_T}$ and  $\kappa=0.5$ in Theorem~\ref{online_op:corollaryreg} recovers the dynamic regret bound $\mathcal{O}(\sqrt{T(1+P_T)})$. This is the optimal dynamic regret bound for centralized online convex optimization as shown in \cite{zhang2018adaptive,yi2021regret}.
\end{remark}

We then provide static network regret and cumulative constraint violation bounds for Algorithm~\ref{online_op:algorithm} by
replacing $y_{[T]}$ with the static sequence $\check{x}^*_{[T]}$ in Theorem~\ref{online_op:corollaryreg}.
\begin{corollary}\label{online_op:theoremstatic}
	Under the same conditions as stated in Theorem~\ref{online_op:corollaryreg} with $\alpha_0=1$, for any $T\in\mathbb{N}_+$, it holds that
	\begin{align}
		&\NetReg(\{x_{i,t}\},\check{x}^*_{[T]})
		=\mathcal{O}(T^{\max\{\kappa,1-\kappa\}}),\label{online_op:staticregequ1}\\
		&\frac{1}{n}\sum_{i=1}^n\sum_{t=1}^T\|[g_{t}(x_{i,t})]_+\|=
		\mathcal{O}(T^{1-\kappa/2}).
		\label{online_op:staticconsequ}
	\end{align}
\end{corollary}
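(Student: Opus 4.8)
The plan is to obtain Corollary~\ref{online_op:theoremstatic} as a direct specialization of Theorem~\ref{online_op:corollaryreg} to the static comparator sequence $\check{x}^*_{[T]}$, with no new analysis required. First I would verify that $\check{x}^*_{[T]}$ is an admissible comparator for Theorem~\ref{online_op:corollaryreg}: by construction $\check{x}^*_{[T]}=(\check{x}^*_T,\dots,\check{x}^*_T)\in\check{\calX}_{T}$, and since the problem formulation records the inclusion $\check{\calX}_{T}\subseteq\calX_{T}$, the hypothesis $y_{[T]}\in\calX_{T}$ of Theorem~\ref{online_op:corollaryreg} is satisfied by taking $y_{[T]}=\check{x}^*_{[T]}$.

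The key observation is that the path-length of a static sequence vanishes. Since every entry of $\check{x}^*_{[T]}$ equals $\check{x}^*_T$, we have $y_{t+1}-y_{t}=\bm{0}_p$ for all $t$, so that
\[
P_T=\sum_{t=1}^{T-1}\|y_{t+1}-y_{t}\|=0.
\]
With $\alpha_0=1$ and $P_T=0$ I would then substitute directly into the two bounds \eqref{online_op:corollaryregequ1} and \eqref{online_op:corollaryconsequ}. The regret bound \eqref{online_op:corollaryregequ1} collapses to $\mathcal{O}(T^{1-\kappa}+T^{\kappa})$, and because $T^{1-\kappa}+T^{\kappa}\le 2\,T^{\max\{\kappa,1-\kappa\}}$ for every $T\ge1$, this is precisely $\mathcal{O}(T^{\max\{\kappa,1-\kappa\}})$, giving \eqref{online_op:staticregequ1}. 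The cumulative constraint violation bound \eqref{online_op:corollaryconsequ} is independent of $P_T$, so with $\alpha_0=1$ it reduces to $\mathcal{O}(\sqrt{2\,T^{2-\kappa}})=\mathcal{O}(T^{1-\kappa/2})$, which is \eqref{online_op:staticconsequ}.

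Because the statement is a pure specialization, I do not anticipate any genuine analytical obstacle beyond Theorem~\ref{online_op:corollaryreg} itself: the entire difficulty is already absorbed into the proof of that theorem (deferred to Appendix~\ref{online_op:corollaryregproof}), which treats the general dynamic comparator. The only points needing explicit care are the two structural facts above, namely that $\check{x}^*_{[T]}$ is feasible so that the theorem applies, and that its path-length is identically zero so that the $T^{\kappa}P_T$ term drops out, leaving the stated static bounds.
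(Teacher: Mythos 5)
Your proposal is correct and matches the paper's own argument exactly: the paper derives Corollary~\ref{online_op:theoremstatic} precisely by substituting the static comparator $\check{x}^*_{[T]}\in\check{\calX}_T\subseteq\calX_T$ into Theorem~\ref{online_op:corollaryreg}, whereupon $P_T=0$ and $\alpha_0=1$ collapse \eqref{online_op:corollaryregequ1}--\eqref{online_op:corollaryconsequ} to the stated bounds. Your two explicit care points (feasibility of the static comparator and vanishing path-length) are the right ones, and no further analysis is needed.
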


\begin{remark}\label{online_op:remark_static}
	The results presented in Corollary~\ref{online_op:theoremstatic} generalize the results in \cite{jenatton2016adaptive,NIPS2018_7852,sun2017safety,yuan2021distributed}.
	Specifically,  by setting $\kappa=0.5$ in Corollary~\ref{online_op:theoremstatic}, the result in \cite{sun2017safety} is recovered, although the algorithm proposed in \cite{sun2017safety} is centralized and the standard constraint violation metric rather than the more strict metric is used. The bounds presented in \eqref{online_op:staticregequ1}--\eqref{online_op:staticconsequ} are consistent with the result in \cite{jenatton2016adaptive,NIPS2018_7852,yuan2021distributedb}, although the proposed algorithm in \cite{jenatton2016adaptive,NIPS2018_7852} is centralized, the constraint functions in \cite{jenatton2016adaptive,NIPS2018_7852,yuan2021distributedb} are time-invariant and known in advance, and the standard constraint violation metric is used in \cite{jenatton2016adaptive}.
	The same performance was also achieved in \cite{yuan2021distributed} when the loss functions are quadratic and the constraint functions are time-invariant, linear, and known in advance.
	Corollary~\ref{online_op:theoremstatic} also shows that Algorithm~\ref{online_op:algorithm} achieves improved performance compared with the static network regret bound $\mathcal{O}(T^{0.5+\beta})$ and the standard network constraint violation bound $\mathcal{O}(T^{1-\beta/2})$ achieved by the distributed online algorithm proposed in \cite{yuan2017adaptive}, where $\beta\in(0,0.5)$, although the global constraint functions in \cite{yuan2017adaptive} are time-invariant and known in advance by each agent. 
\end{remark}

The static network regret bound in Corollary~\ref{online_op:theoremstatic} at least is $\mathcal{O}(\sqrt{T})$ and it can be reduced to strictly less than $\mathcal{O}(\sqrt{T})$ if the local loss functions $f_{i,t}(x)$ are strongly convex.
\begin{assumption}\label{online_op:assstrongconvex}
	For any $i\in[n]$ and $t\in\mathbb{N}_+$, $\{f_{i,t}(x)\}$ are strongly convex with convex parameter $\mu>0$ over $\mathbb{X}$ , i.e., for all $x,y\in\mathbb{X}$,
	\begin{align}\label{online_op:assstrongconvexequ}
		f_{i,t}(x)\ge f_{i,t}(y)+\langle x-y,\partial f_{i,t}(y)\rangle+\frac{\mu}{2}\|x-y\|.
	\end{align}
\end{assumption}

\begin{theorem}\label{online_op:corollaryreg_sc}
	Suppose Assumptions~\ref{online_op:assfunction}--\ref{online_op:assstrongconvex} hold. For all $i\in[n]$, let $\{x_{i,t}\}$ be the sequences generated by Algorithm~\ref{online_op:algorithm} with
	\begin{align}\label{online_op:stepsize1_sc}
		\alpha_t=\frac{1}{t^{c}},~\beta_t=\frac{1}{t^\kappa},
		~\gamma_t=\frac{1}{t^{1-k}},~\forall t\in\mathbb{N}_+,
	\end{align} where $c\in[\max\{\kappa,1-\kappa\},1)$ and $\kappa\in(0,1)$. Then, for any $T\in\mathbb{N}_+$, it holds that
	\begin{align}
		&\NetReg(\{x_{i,t}\},\check{x}^*_{[T]})
		=\mathcal{O}(T^{\kappa}),\label{online_op:corollaryregequ1_sc}\\
		&\frac{1}{n}\sum_{i=1}^n\sum_{t=1}^T\|[g_{t}(x_{i,t})]_+\|=
		\mathcal{O}(T^{1-\kappa/2}).
		\label{online_op:corollaryconsequ_sc}
	\end{align}
\end{theorem}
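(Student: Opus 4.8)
The plan is to reuse the master inequality that underlies Theorem~\ref{online_op:corollaryreg}, namely Lemma~\ref{online_op:theoremreg}, and to modify it at the single point where the loss functions enter. In deriving that inequality the loss is used only through the first-order convexity bound $f_t(x_{i,t})-f_t(\check{x}^*_T)\le\langle\partial f_t(x_{i,t}),x_{i,t}-\check{x}^*_T\rangle$; under Assumption~\ref{online_op:assstrongconvex} the global loss $f_t=\frac1n\sum_j f_{j,t}$ is itself $\mu$-strongly convex, so this can be sharpened to carry an extra $-\frac{\mu}{2}\|x_{i,t}-\check{x}^*_T\|^2$ in every term. Since the stepsizes $\beta_t=t^{-\kappa}$ and $\gamma_t=t^{-(1-\kappa)}$ are exactly those of Theorem~\ref{online_op:corollaryreg}, and since neither the consensus/tracking estimates (which rest on double stochasticity and convexity of $\|\cdot\|^2$) nor the dual-drift/constraint-violation estimates use convexity of the loss, the entire constraint-violation analysis carries over verbatim. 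This already delivers the bound $\frac1n\sum_{i}\sum_t\|[g_t(x_{i,t})]_+\|=\mathcal{O}(T^{1-\kappa/2})$ of \eqref{online_op:corollaryconsequ_sc}, exactly as in Corollary~\ref{online_op:theoremstatic}.

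It remains to establish the sharpened regret bound \eqref{online_op:corollaryregequ1_sc}. The point is that, with the faster primal stepsize $\alpha_t=t^{-c}$, the comparator-dependent boundary term produced by the primal projection step is of order $1/\alpha_T=T^c$, which for $c$ close to $1$ would dominate and destroy the bound; strong convexity is precisely what removes it. Collecting the quadratic comparator terms, the distributed projection analysis yields, after summing over agents, a telescoping block whose per-round increment is $\frac{1}{2\alpha_{t+1}}-\frac{1}{2\alpha_t}=\frac12\big((t+1)^c-t^c\big)$, against which the injected $-\frac{\mu}{2}\|x_{i,t}-\check{x}^*_T\|^2$ is set. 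Because $(t+1)^c-t^c\le c\,t^{c-1}\to0$, there is a threshold $t_0=t_0(\mu,c)$ beyond which this increment is at most $\frac{\mu}{2}$; hence every term with $t\ge t_0$ is nonpositive, the finitely many terms with $t<t_0$ together with the boundary term $\frac{1}{2\alpha_1}\|x_{i,1}-\check{x}^*_T\|^2$ contribute only $\mathcal{O}(1)$, and the whole quadratic block collapses to a constant. This is the step that distinguishes the strongly convex analysis from Theorem~\ref{online_op:corollaryreg}.

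With the comparator quadratics neutralized, the regret is controlled by the stepsize-weighted residual terms inherited from Lemma~\ref{online_op:theoremreg}, which are of the form $\sum_{t}\alpha_t(\cdots)=\mathcal{O}(T^{1-c})$. Imposing $c\ge1-\kappa$ forces $T^{1-c}\le T^{\kappa}$, while the requirement $c\ge\kappa$ is what keeps the remaining cross terms—those coupling the dual iterate $q_{i,t}$ and the consensus error with the primal stepsize—at the same $\mathcal{O}(T^{\kappa})$ order and consistent with the $\mathcal{O}(T^{1-\kappa/2})$ violation scaling; together these yield the admissible window $c\in[\max\{\kappa,1-\kappa\},1)$. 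The strict upper bound $c<1$ is exactly what makes the increment $(t+1)^c-t^c$ vanish, so that the absorption of the previous paragraph succeeds for an arbitrary $\mu>0$. Substituting gives $\NetReg(\{x_{i,t}\},\check{x}^*_{[T]})=\mathcal{O}(T^{\kappa})$.

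The main obstacle is the absorption in the second step. One must verify that the distributed update—where $x_{i,t+1}$ is the projection of the mixed point $z_{i,t+1}=\sum_j[W_t]_{ij}x_{j,t}$ rather than of $x_{i,t}$ itself—still produces a clean per-agent telescope with coefficient $\frac{1}{2\alpha_{t+1}}-\frac{1}{2\alpha_t}$ once the mixing is handled by convexity of $\|\cdot\|^2$ and double stochasticity, so that the single reservoir $-\frac{\mu}{2}\sum_{i,t}\|x_{i,t}-\check{x}^*_T\|^2$ suffices to dominate it. One must also check that the cross terms coupling $q_{i,t}$ with $x_{i,t}-\check{x}^*_T$ are either sign-favorable or can be charged against the same reservoir via Young's inequality without reintroducing a growing quadratic; ensuring that one $\mu$ covers all of these simultaneously is the delicate bookkeeping at the heart of the proof.
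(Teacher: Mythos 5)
Your proposal is correct and follows essentially the same route as the paper: you inject strong convexity exactly where the first-order convexity bound was used, and absorb the non-telescoping comparator quadratics with coefficient $\tfrac{1}{\alpha_{t+1}}-\tfrac{1}{\alpha_t}-\mu=(t+1)^c-t^c-\mu$, which the paper likewise makes nonpositive for all $t\ge\lceil(1/\mu)^{1/(1-c)}\rceil$ (its constant $\varepsilon_7$), leaving an $\mathcal{O}(1)$ contribution from the finitely many early rounds, with $c\ge1-\kappa$ yielding the $\mathcal{O}(T^{\kappa})$ regret and $c\ge\kappa$ preserving the $\mathcal{O}(T^{1-\kappa/2})$ violation. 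The obstacle you flag is resolved exactly as you anticipate --- the paper bridges $\|y_t-z_{i,t+1}\|^2$ to $\sum_{j}[W_t]_{ij}\|y_t-x_{j,t}\|^2$ via convexity of $\|\cdot\|^2$ and double stochasticity before charging the $-\tfrac{\mu}{2}\|y_t-x_{i,t}\|^2$ reservoir --- and your worry about cross terms coupling $q_{i,t}$ with the comparator is moot, since those terms are already disposed of by the dual-drift lemma together with the sign condition $\tfrac{1}{\gamma_t}-\tfrac{1}{\gamma_{t+1}}+\beta_{t+1}>0$, so the reservoir is needed only against the stepsize increments.
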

\begin{proof}
	The explicit expressions of the right-hand sides of \eqref{online_op:corollaryregequ1_sc}--\eqref{online_op:corollaryconsequ_sc}, and the proof are given in  Appendix~\ref{online_op:corollaryregproof_sc}.
\end{proof}
\begin{remark}\label{online_op:remark_sc}
	Theorem~\ref{online_op:corollaryreg_sc} shows that under the strongly convex assumption Algorithm~\ref{online_op:algorithm} achieves the same static network regret and constraint violation bounds as the distributed algorithm proposed in \cite{yuan2017adaptive}. However, in \cite{yuan2017adaptive} the standard constraint violation metric rather than the more strict metric is used, and the global constraint functions are time-invariant and known in advance by each agent. Moreover, in \cite{yuan2017adaptive} the convex parameter and the upper bound of the subgradients of the loss and constraint functions need to be known in advance to design the algorithm parameters. However, the bounds presented in \eqref{online_op:corollaryregequ1_sc}--\eqref{online_op:corollaryconsequ_sc} are larger than the bounds achieved by the centralized and distributed algorithms respectively proposed in \cite{NIPS2018_7852} and \cite{yuan2021distributedb} for strongly convex loss functions. We think this is acceptable since Algorithm~\ref{online_op:algorithm} is suitable for a more general scenario where not only the constraints are time-varying but also each agent only knows a coordinate block of the constraint function at each round. Moreover, it does not use any parameters related to the loss and constraint functions. In contrast, the algorithms proposed in \cite{NIPS2018_7852,yuan2021distributedb} use the upper bound of the subgradients of the loss and constraint functions to design the stepsizes, and it is unclear whether the results in \cite{NIPS2018_7852,yuan2021distributedb} can still be achieved or not after extending the algorithms to suit the general scenario as considered in this paper.
	It is our ongoing work to design new distributed online algorithms such that they can achieve the same regret and cumulative constraint violation bounds as achieved by the centralized algorithm proposed in \cite{NIPS2018_7852}.
\end{remark}

Before ending this section, we would like to present some discussions on the stepsizes. Algorithm~\ref{online_op:algorithm} uses vanishing stepsizes as shown in \eqref{online_op:stepsize1} and \eqref{online_op:stepsize1_sc}, while there are some online algorithms, such as the online algorithms proposed in \cite{yi2016tracking,mokhtari2016online,zhang2019distributed,carnevale2020distributed}, used constant stepsizes. However, using vanishing stepsizes does not mean that Algorithm~\ref{online_op:algorithm} cannot be used for infinite horizons, since the results stated above hold for any time horizons, which guarantee that Algorithm~\ref{online_op:algorithm} can be used for infinite horizons. 
By the way, it should be mentioned that \cite{yi2016tracking,mokhtari2016online,zhang2019distributed,carnevale2020distributed} all assumed that the cost functions are smooth, i.e., the gradients of the cost functions are Lipschitz continuous. Such an assumption is rarely used in the papers considered  vanishing stepsizes.
Moreover, to the best of our knowledge, in the study of online convex optimization with long term constraints, there are no studies that consider non-vanishing stepsizes. We think that one possible reason for this is as follows. In the analysis in \cite{yi2016tracking,mokhtari2016online,zhang2019distributed,carnevale2020distributed}, the inequality that $f_t(x_t)-f_t(x^*_t)\ge0$ is explicitly or implicitly used. For example, that inequality has been explicitly used below equation (32) in the proof of \cite{yi2016tracking} and implicitly used to yield equation (29) in \cite{carnevale2020distributed}. However, when studying online convex optimization with long term constraints, that inequality may not hold since when choosing $x_t$ the constraints can be violated. Therefore, it is challenging to design online algorithms with non-vanishing stepsizes for online convex optimization with long term constraints and analyze their performance.

Moreover, we would like to point out that compared with related studies, the consideration of the more strict constraint violation metric is a contribution but not the key contribution of this section and does not cause significant challenges for the performance analysis either. Actually, some existing results can be extended to the scenario under the more strict constraint violation metric when using the clipped constraint functions to replace the original constraint functions.  
Instead, the key contributions of this section are (a) proposing a distributed algorithm for the general online constrained convex optimization problem which incorporates various problems studied in the literature as special cases and (b) showing the proposed algorithm achieves the same or even better performance measured by regret and the more strict constraint violation metric as explained in the above remarks, which also make the proofs more challenging. Simply speaking, the main challenge in the proofs is how to handle the error caused by the inconsistency in the local decisions at each round. It should be mentioned that due to the distributed setting the proofs are much more complicated than that for the centralized algorithms. Moreover, the proofs are different from \cite{yuan2017adaptive,yuan2021distributed,yuan2021distributedb} since we achieve strictly better results than \cite{yuan2017adaptive} as explained in Remark~\ref{online_op:remark_static}, and our algorithm is different from the algorithms in \cite{yuan2021distributed,yuan2021distributedb} even when considering the same problem settings as \cite{yuan2021distributed,yuan2021distributedb}. Similar discussions also hold for the results in the next section. In conclusion, the results in this paper can be viewed as nontrivial extensions of existing results.

\section{Distributed Online Algorithm with Bandit Feedback}\label{online_opsec:algorithm_bandit}

To handle the situations where the entire function and gradient information are not available, in this section, we focus on the bandit feedback, where at each round each agent can sample the values of its local loss and constraint functions at two points\footnote{The cases where one- and multi-point bandit feedback are available could be studied similarly, but would have different network regret and cumulative constraint violation bounds.}.

\subsection{Algorithm Description}
Under the bandit feedback setting each agent $i$ does not know the subgradients $\partial f_{i,t}(x_{i,t})$ and $\partial [g_{i,t}(x_{i,t})]_+$. Inspired by the two-point gradient estimator proposed in \cite{agarwal2010optimal,shamir2017optimal}, these subgradients can be estimated by
\begin{align*}
	\hat{\partial}f_{i,t}(x_{i,t})
	&=\frac{p}{\delta_{t}}(f_{i,t}(x_{i,t}+\delta_{t}u_{i,t})
	-f_{i,t}(x_{i,t}))u_{i,t}\in\mathbb{R}^{p},
\end{align*}
and
\begin{align*}
	\hat{\partial}[g_{i,t}(x_{i,t})]_+
	=\frac{p}{\delta_{t}}([g_{i,t}(x_{i,t}+\delta_{t}u_{i,t})]_+-[g_{i,t}(x_{i,t})]_+)^\top\otimes u_{i,t}\in\mathbb{R}^{p\times m_i},
\end{align*}
where $u_{i,t}\in\mathbb{S}^p$ is a uniformly distributed random vector, $\delta_t\in(0,r(\mathbb{X})\xi_{t}]$ is an exploration parameter, $\xi_t\in(0,1)$ is a shrinkage coefficient, and $r(\mathbb{X})$ is a known constant given in the first part in Assumption~\ref{online_op:assfunction}.

Combining our Algorithm~\ref{online_op:algorithm} with the above two-point gradient estimators, our algorithm for the bandit setting is outlined in pseudo-code as Algorithm~\ref{online_op:algorithm_bandit}.

\begin{algorithm}
	\caption{Distributed Online Algorithm with Bandit Feedback}
	\begin{algorithmic}\label{online_op:algorithm_bandit}
		\STATE \textbf{Input}:   non-increasing sequences $\{\alpha_{t}\}$, $\{\beta_{t}\}$, $\{\gamma_{t}\}\subseteq(0,+\infty)$, $\{\xi_{t}\}\subseteq(0,1)$, and $\{\delta_{t}\}\subseteq(0,r(\mathbb{X})\xi_{t}]$.
		\STATE \textbf{Initialize}:  $x_{i,1}\in(1-\xi_1)\mathbb{X}$ and $q_{i,1}={\bm 0}_{m_i}$ for all $i\in[n]$.
		\FOR{$t=1,\dots$}
		\FOR{$i=1,\dots,n$  in parallel}
		\STATE  Broadcast $x_{i,t}$ to $\mathcal{N}^{\outin}_i(\mathcal{G}_{t})$ and receive $x_{j,t}$ from $j\in\mathcal{N}^{\inout}_i(\mathcal{G}_{t})$;
		\STATE Select vector $u_{i,t}\in\mathbb{S}^{p}$ independently and uniformly at random;
		\STATE Sample $f_{i,t}(x_{i,t}+\delta_{t}u_{i,t})$, $f_{i,t}(x_{i,t})$, $g_{i,t}(x_{i,t}+\delta_{t}u_{i,t})$ and $g_{i,t}(x_{i,t})$;
		\STATE  Update \begin{align}
			z_{i,t+1}&=\sum_{j=1}^n[W_{t}]_{ij}x_{j,t},\label{online_op:al_z_bandit}\\
			\hat{\omega}_{i,t+1}&=\hat{\partial}f_{i,t}(x_{i,t})+\hat{\partial}[g_{i,t}(x_{i,t})]_+ q_{i,t},\label{online_op:al_bigomega_bandit}\\
			x_{i,t+1}&=\calP_{(1-\xi_{t+1})\mathbb{X}}(z_{i,t+1}-\alpha_{t+1}\hat{\omega}_{i,t+1}),\label{online_op:al_x_bandit}\\
			q_{i,t+1}&=[(1-\beta_{t+1}\gamma_{t+1})q_{i,t}+\gamma_{t+1}([g_{i,t}(x_{i,t})]_+
			+(\hat{\partial} [g_{i,t}(x_{i,t})]_+)^\top(x_{i,t+1}-x_{i,t}))]_{+}.\label{online_op:al_q_bandit}
		\end{align}
		\ENDFOR
		\ENDFOR
		\STATE  \textbf{Output}: $\{x_{i,t}\}$.
	\end{algorithmic}
\end{algorithm}

The sequences $\{\alpha_{t}\}$, $\{\beta_{t}\}$, $\{\gamma_{t}\}$, $\{\xi_{t}\}$, and $\{\delta_{t}\}$ used in Algorithm~\ref{online_op:algorithm_bandit} are pre-determined and the vector sequences $\{u_{i,t}\}$ are randomly selected. Moreover,
$\{z_{i,t}\}$, $\{\hat{\omega}_{i,t}\}$, $\{x_{i,t}\}$, and $\{q_{i,t}\}$ are random vector sequences generated by Algorithm~\ref{online_op:algorithm_bandit}. Let $\mathfrak{U}_t$ denote the $\sigma$-algebra generated by the independent and identically distributed random variables $u_{1,t},\dots,u_{n,t}$ and let $\mathcal{U}_t=\bigcup_{s=1}^{t}\mathfrak{U}_s$. It is straightforward to see that $\{z_{i,t+1}\}$, $\{\hat{\omega}_{i,t}\}$, $\{x_{i,t}\}$, and $\{q_{i,t}\},~i\in[n]$ depend on $\mathcal{U}_{t-1}$ and are independent of $\mathfrak{U}_s$ for all $s\ge t$.

\subsection{Performance Analysis}
This section analyzes network regret and cumulative constraint violation bounds for Algorithm~\ref{online_op:algorithm_bandit}.

Similar to the performance analysis for Algorithm~\ref{online_op:algorithm}. We have the following results.

\begin{theorem}\label{online_op:corollaryreg_bandit}
	Suppose Assumptions~\ref{online_op:assfunction}--\ref{online_op:assgraph} hold. For all $i\in[n]$, let $\{x_{i,t}\}$ be the sequences generated by Algorithm~\ref{online_op:algorithm_bandit} with
	\begin{align}\label{online_op:stepsize1_bandit}
		&\alpha_t=\frac{\alpha_0}{t^{\kappa}},~\beta_t=\frac{1}{t^{\kappa}},
		~\gamma_t=\frac{1}{t^{1-\kappa}},~\xi_{t}=\frac{1}{t+1},~\delta_{t}=\frac{r(\mathbb{X})}{t+1},~t\in\mathbb{N}_+,
	\end{align} where $\alpha_0>0$ and $\kappa\in(0,1)$. 
	Then, for any $T\in\mathbb{N}_+$ and any benchmark $y_{[T]}\in\calX_{T}$,
	\begin{align}
		&\mathbf{E}[\NetReg(\{x_{i,t}\},y_{[T]})]
		=\mathcal{O}\Big(\alpha_0 T^{1-\kappa}+\frac{T^\kappa(1+P_T)}{\alpha_0}\Big),\label{online_op:corollaryregequ1_bandit}\\
		&\frac{1}{n}\sum_{i=1}^n\sum_{t=1}^T\mathbf{E}[\|[g_{t}(x_{i,t})]_+\|]=
		\mathcal{O}(\sqrt{(\alpha_0+1)T^{2-\kappa}}).\label{online_op:corollaryconsequ_bandit}
	\end{align}
\end{theorem}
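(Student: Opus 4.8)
The plan is to reduce the bandit analysis to the full-information analysis already carried out for Algorithm~\ref{online_op:algorithm} (Theorem~\ref{online_op:corollaryreg}, whose core estimate is Lemma~\ref{online_op:theoremreg}), by replacing the true functions with their smoothed surrogates and exploiting the fact that the two-point estimators are conditionally unbiased estimates of the smoothed gradients. Concretely, for each $i$ and $t$ I would introduce the smoothed functions $\hat{f}_{i,t}(x)=\mathbf{E}_{v}[f_{i,t}(x+\delta_t v)]$ and $\widehat{[g_{i,t}]_+}(x)=\mathbf{E}_{v}[[g_{i,t}(x+\delta_t v)]_+]$, where $v$ is uniform on $\mathbb{B}^p$, and establish the standard estimator identities $\mathbf{E}[\hat{\partial} f_{i,t}(x_{i,t})\mid\mathcal{U}_{t-1}]=\partial\hat{f}_{i,t}(x_{i,t})$ and $\mathbf{E}[\hat{\partial}[g_{i,t}(x_{i,t})]_+\mid\mathcal{U}_{t-1}]=\partial\widehat{[g_{i,t}]_+}(x_{i,t})$, which hold because $x_{i,t}$ is $\mathcal{U}_{t-1}$-measurable. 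I would also record three elementary facts that follow from Assumption~\ref{online_op:assfunction}: the surrogates remain convex (clipping and averaging preserve convexity); the smoothing error is controlled as $|\hat{f}_{i,t}(x)-f_{i,t}(x)|\le F_2\delta_t$ and likewise $\|\widehat{[g_{i,t}]_+}(x)-[g_{i,t}(x)]_+\|\le\sqrt{m_i}\,F_2\delta_t$; and the estimators are deterministically bounded, $\|\hat{\partial} f_{i,t}(x)\|\le pF_2$ and $\|\hat{\partial}[g_{i,t}(x)]_+\|\le pF_2$, because the two-point differences are $F_2\delta_t$-small and $\|u_{i,t}\|=1$.

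With these in hand, the second step is to run the deterministic argument of Lemma~\ref{online_op:theoremreg} but with $(\hat{f}_{i,t},\widehat{[g_{i,t}]_+})$ in place of $(f_{i,t},[g_{i,t}]_+)$ and with $\hat{\omega}_{i,t+1}$ in place of $\omega_{i,t+1}$. Since Algorithm~\ref{online_op:algorithm_bandit} and Algorithm~\ref{online_op:algorithm} share the same primal consensus/descent and dual ascent structure, the only change being that the true subgradients are swapped for the estimators in \eqref{online_op:al_bigomega_bandit} and \eqref{online_op:al_q_bandit}, the per-round drift inequalities are identical in form, with $F_2$ replaced by $pF_2$ in the subgradient-norm constants. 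Taking conditional expectation given $\mathcal{U}_{t-1}$ turns the estimator inner products into the true smoothed-gradient inner products, so the convexity step of the full-information proof applies to $\hat{f}_{i,t}$ and $\widehat{[g_{i,t}]_+}$; telescoping and summing over $t$ then yields the same functional form as \eqref{online_op:corollaryregequ1}--\eqref{online_op:corollaryconsequ}, now stated for the smoothed functions and in expectation.

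The third step converts this surrogate bound back to the original functions and handles the set shrinkage. I would compare against the shrunk comparator $(1-\xi_t)y_t\in(1-\xi_t)\mathbb{X}$, which is admissible for the projection in \eqref{online_op:al_x_bandit}; since $\|y_t-(1-\xi_t)y_t\|\le R(\mathbb{X})\xi_t$ by Assumption~\ref{online_op:assfunction}, this substitution costs an extra $\sum_{t=1}^T\mathcal{O}(\xi_t)$ in regret, while the smoothing-error bounds above add $\sum_{t=1}^T\mathcal{O}(\delta_t)$ to both the (expected) regret and the (expected) cumulative violation. Substituting $\delta_t=r(\mathbb{X})\xi_t=r(\mathbb{X})/(t+1)$ from \eqref{online_op:stepsize1_bandit} gives $\sum_{t=1}^T\delta_t,\sum_{t=1}^T\xi_t=\mathcal{O}(\log T)$, so every bandit-specific correction is of strictly lower order than the $\mathcal{O}(\sqrt{T})$ regret and $\mathcal{O}(T^{1-\kappa/2})$ violation terms; the remaining stepsizes in \eqref{online_op:stepsize1_bandit} coincide with \eqref{online_op:stepsize1}, so the main terms reproduce the bounds \eqref{online_op:corollaryregequ1_bandit}--\eqref{online_op:corollaryconsequ_bandit}.

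The hard part will be the filtration bookkeeping in the second step: the dual update \eqref{online_op:al_q_bandit} contains the coupled term $(\hat{\partial}[g_{i,t}(x_{i,t})]_+)^\top(x_{i,t+1}-x_{i,t})$, in which both factors depend on the current noise $u_{i,t}$, so the inner product does not split cleanly under $\mathbf{E}[\cdot\mid\mathcal{U}_{t-1}]$. I expect to control it by bounding $\|x_{i,t+1}-x_{i,t}\|$ deterministically in terms of $\alpha_{t+1}\|\hat{\omega}_{i,t+1}\|$ (using nonexpansiveness of the projection onto the nested sets $(1-\xi_t)\mathbb{X}\subseteq(1-\xi_{t+1})\mathbb{X}$) together with the consensus error, so that $\|x_{i,t+1}-x_{i,t}\|=\mathcal{O}(\alpha_{t+1}pF_2(1+\|q_{i,t}\|))$ plus a disagreement term, and then absorbing the resulting second-moment contributions of the estimators into the stepsize-weighted error terms exactly as in the full-information proof, now with the extra factor $p$ and with the expectation taken only after these deterministic bounds are applied.
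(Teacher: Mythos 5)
Your proposal is essentially the paper's own route: the paper likewise introduces the smoothed surrogates $\hat{f}_{i,t}$ and $[\hat{g}_{i,t}]_+$, proves their convexity, unbiasedness ($\partial\hat{f}_{i,t}(x)=\mathbf{E}_{\mathfrak{U}_t}[\hat{\partial}f_{i,t}(x)]$, and similarly for the clipped constraints), the $pF_2$ bounds and the $F_2\delta_t$ approximation errors (Lemma~\ref{online_op:lemma_gradient}), then reruns the full-information machinery (Lemmas~\ref{online_op:lemma_virtualbound_bandit}--\ref{online_op:theoremreg_bandit} mirror Lemmas~\ref{online_op:lemma_virtualbound}--\ref{online_op:theoremreg} with $F_2$ upgraded to $pF_2$ and $\hat{b}_{i,t}$ in place of $b_{i,t}$) against the shrunk comparator $\hat{y}_t=(1-\xi_t)y_t$, exactly as you outline; your handling of the coupled dual term by bounding deterministically before taking $\mathbf{E}[\cdot\mid\mathcal{U}_{t-1}]$ is also how the paper proceeds (the $q_{i,t}^\top\hat{b}_{i,t+1}$ terms cancel pathwise between the dual and primal lemmas).

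One quantitative slip in your third step: the shrinkage and smoothing errors do \emph{not} all sum to $\mathcal{O}(\log T)$, because the constraint-side error enters multiplied by the dual variable, as in $F_2(R(\mathbb{X})\xi_t+\delta_t)\|q_{i,t}\|$ with $\|q_{i,t}\|\le\hat{\varepsilon}_1/\beta_t=\mathcal{O}(t^{\kappa})$, so this contribution is $\sum_{t=1}^T\mathcal{O}(t^{\kappa-1})=\mathcal{O}(T^{\kappa})$ rather than logarithmic --- this is visible in the paper's aggregate bound \eqref{online_op:theoremregequ_bandit}, where the term $\sum_{t=1}^TF_2(R(\mathbb{X})\xi_t+\delta_t)(\hat{\varepsilon}_1/\beta_t+1)$ appears and evaluates to $F_2(R(\mathbb{X})+r(\mathbb{X}))(\hat{\varepsilon}_1T^{\kappa}/\kappa+\log T)$. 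Since $\mathcal{O}(T^{\kappa})$ matches the order of the existing $T^{\kappa}(1+P_T)/\alpha_0$ term, your conclusion survives, but the accounting must be done with the $\|q_{i,t}\|$ weight included; similarly, the comparator shift produces the extra telescoping-type term $\sum_{t=1}^T 2R(\mathbb{X})^2(\xi_t-\xi_{t+1})/\alpha_{t+1}=\mathcal{O}(1/\alpha_0)$ rather than a plain $\sum_t\mathcal{O}(\xi_t)$, which is harmless but worth stating explicitly.
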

\begin{proof}
The explicit expressions of the right-hand sides of \eqref{online_op:corollaryregequ1_bandit}--\eqref{online_op:corollaryconsequ_bandit}, and the proof are given in  Appendix~\ref{online_op:corollaryregproof_bandit}.
\end{proof}
\begin{remark}\label{online_op:remark_dynamic_bandit}
	By setting $\alpha_0=\sqrt{1+P_T}$ and  $\kappa=0.5$ in Theorem~\ref{online_op:corollaryreg_bandit}, the dynamic regret bound achieved by the centralized online algorithm with two-point bandit feedback proposed in \cite{yi2016tracking} is recovered, although \cite{yi2016tracking} only considered the static set constraint. Moreover, in this case the dynamic regret and constraint violation bounds achieved by the centralized online algorithm with two-point bandit feedback proposed in \cite{cao2019online} are also recovered.
\end{remark}

Replacing $y_{[T]}$ with the static sequence $\check{x}^*_{[T]}$ in Theorem~\ref{online_op:corollaryreg_bandit} gives static network regret and cumulative constraint violation bounds for Algorithm~\ref{online_op:algorithm_bandit}.
\begin{corollary}\label{online_op:theoremstatic_bandit}
	Under the same conditions as stated in Theorem~\ref{online_op:corollaryreg_bandit} with $\alpha_0=1$, for any $T\in\mathbb{N}_+$, it holds that
	\begin{align}
		&\mathbf{E}[\NetReg(\{x_{i,t}\},\check{x}^*_{[T]})]
		=\mathcal{O}(T^{\max\{\kappa,1-\kappa\}}),\label{online_op:staticregequ1_bandit}\\
		&\frac{1}{n}\sum_{i=1}^n\sum_{t=1}^T\mathbf{E}[\|[g_{t}(x_{i,t})]_+\|]=
		\mathcal{O}(T^{1-\kappa/2}).
		\label{online_op:staticconsequ_bandit}
	\end{align}
\end{corollary}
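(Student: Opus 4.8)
The plan is to derive Corollary~\ref{online_op:theoremstatic_bandit} directly from the dynamic bounds of Theorem~\ref{online_op:corollaryreg_bandit} by specializing the comparator sequence. First I would observe that the optimal static decision sequence $\check{x}^*_{[T]}=(\check{x}^*_T,\dots,\check{x}^*_T)$ belongs to $\check{\calX}_{T}\subseteq\calX_{T}$, which is nonempty by the standing feasibility assumption in Section~\ref{online_opsec:problem}; hence it is an admissible comparator to which Theorem~\ref{online_op:corollaryreg_bandit} applies with $y_{[T]}=\check{x}^*_{[T]}$.

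The crucial step is to note that $\check{x}^*_{[T]}$ is a \emph{constant} sequence, so its path-length vanishes:
\begin{align*}
P_T=\sum_{t=1}^{T-1}\|\check{x}^*_T-\check{x}^*_T\|=0.
\end{align*}
Substituting $\alpha_0=1$ and $P_T=0$ into the dynamic regret bound $\mathcal{O}(\alpha_0 T^{1-\kappa}+T^\kappa(1+P_T)/\alpha_0)$ of Theorem~\ref{online_op:corollaryreg_bandit} leaves $\mathcal{O}(T^{1-\kappa}+T^\kappa)$, which equals $\mathcal{O}(T^{\max\{\kappa,1-\kappa\}})$ since $T^{1-\kappa}+T^\kappa\le 2\,T^{\max\{\kappa,1-\kappa\}}$. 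Likewise, setting $\alpha_0=1$ in the cumulative constraint violation bound $\mathcal{O}(\sqrt{(\alpha_0+1)T^{2-\kappa}})$ yields $\mathcal{O}(\sqrt{2\,T^{2-\kappa}})=\mathcal{O}(T^{1-\kappa/2})$. Because the randomness of the bandit feedback enters only through the expectation, the operator $\mathbf{E}[\cdot]$ carries through both bounds unchanged, giving exactly \eqref{online_op:staticregequ1_bandit}--\eqref{online_op:staticconsequ_bandit}.

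Since the analytical work is already contained in Theorem~\ref{online_op:corollaryreg_bandit}, I do not anticipate a genuine obstacle here; the only points requiring care are verifying that the static comparator is feasible (so that the theorem is applicable) and confirming the elementary $\max$ simplification of $T^{1-\kappa}+T^\kappa$. In this sense the corollary is a routine specialization, mirroring exactly the way Corollary~\ref{online_op:theoremstatic} is obtained from Theorem~\ref{online_op:corollaryreg} in the full-information setting.
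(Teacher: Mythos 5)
Your proposal is correct and coincides with the paper's own (implicit) proof: the paper obtains Corollary~\ref{online_op:theoremstatic_bandit} precisely by substituting the static comparator $\check{x}^*_{[T]}\in\check{\calX}_T\subseteq\calX_T$ into Theorem~\ref{online_op:corollaryreg_bandit}, using $P_T=0$ and $\alpha_0=1$, exactly as you do. Your checks of feasibility of the static comparator, the $T^{1-\kappa}+T^\kappa\le 2T^{\max\{\kappa,1-\kappa\}}$ simplification, and the fact that the violation bound in \eqref{online_op:corollaryconsequ_bandit} is comparator-independent are all the verification needed.
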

\begin{remark}\label{online_op:remark_static_bandit}
	Corollary~\ref{online_op:theoremstatic_bandit} shows that the results achieved by Algorithm~\ref{online_op:algorithm_bandit} are more general than the results achieved by the online algorithms with two-point bandit feedback proposed in \cite{agarwal2010optimal,shamir2017optimal,mahdavi2012trading,yuan2021distributed}. Specifically, by setting $\kappa=0.5$ in Corollary~\ref{online_op:theoremstatic_bandit}, the results in \cite{agarwal2010optimal,shamir2017optimal,mahdavi2012trading} are recovered, although the algorithms proposed in \cite{agarwal2010optimal,shamir2017optimal,mahdavi2012trading} all are centralized, and \cite{agarwal2010optimal,shamir2017optimal} only considered the static set constraint, and \cite{mahdavi2012trading} considered static inequality constraints and full-information feedback for the loss functions. The same bounds as presented in \eqref{online_op:staticregequ1_bandit}--\eqref{online_op:staticconsequ_bandit} were also achieved by the distributed online algorithm with two-point bandit feedback proposed in \cite{yuan2021distributed} when the loss functions are quadratic and the constraint functions are time-invariant, linear, and known in advance.
\end{remark}

If Assumption~\ref{online_op:assstrongconvex} also holds, then the static network regret bound can be further reduced.
\begin{theorem}\label{online_op:corollaryreg_sc_bandit}
	Suppose Assumptions~\ref{online_op:assfunction}--\ref{online_op:assstrongconvex} hold. For all $i\in[n]$, let $\{x_{i,t}\}$ be the sequences generated by Algorithm~\ref{online_op:algorithm_bandit} with
	\begin{align}\label{online_op:stepsize1_sc_bandit}
		&\alpha_t=\frac{1}{t^{c}},~\beta_t=\frac{1}{t^{\kappa}},
		~\gamma_t=\frac{1}{t^{1-\kappa}},~\xi_{t}=\frac{1}{t+1},~\delta_{t}=\frac{r(\mathbb{X})}{t+1},~t\in\mathbb{N}_+,
	\end{align} where $c\in[\max\{\kappa,1-\kappa\},1)$ and $\kappa\in(0,1)$. Then, for any $T\in\mathbb{N}_+$, it holds that
	\begin{align}
		&\mathbf{E}[\NetReg(\{x_{i,t}\},\check{x}^*_{[T]})]
		=\mathcal{O}(T^{\kappa}),\label{online_op:corollaryregequ1_sc_bandit}\\
		&\frac{1}{n}\sum_{i=1}^n\sum_{t=1}^T\mathbf{E}[\|[g_{t}(x_{i,t})]_+\|]=
		\mathcal{O}(T^{1-\kappa/2}).
		\label{online_op:corollaryconsequ_sc_bandit}
	\end{align}
\end{theorem}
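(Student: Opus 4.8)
The plan is to graft the strong-convexity argument behind Theorem~\ref{online_op:corollaryreg_sc} onto the bandit analysis behind Theorem~\ref{online_op:corollaryreg_bandit}; the cumulative constraint violation is then inherited essentially unchanged, since strong convexity only sharpens the regret. The starting point is the ball-smoothing representation of the two-point estimators. Writing $\hat{f}_{i,t}$ and $\widehat{[g_{i,t}]_+}$ for the ball-smoothed functions $\mathbf{E}_{v\in\mathbb{B}^p}[f_{i,t}(\cdot+\delta_t v)]$ and $\mathbf{E}_{v\in\mathbb{B}^p}[[g_{i,t}(\cdot+\delta_t v)]_+]$, one has, because $x_{i,t}$ is $\mathcal{U}_{t-1}$-measurable, the unbiasedness identities $\mathbf{E}[\hat{\partial}f_{i,t}(x_{i,t})\mid\mathcal{U}_{t-1}]=\partial\hat{f}_{i,t}(x_{i,t})$ and $\mathbf{E}[\hat{\partial}[g_{i,t}(x_{i,t})]_+\mid\mathcal{U}_{t-1}]=\partial\widehat{[g_{i,t}]_+}(x_{i,t})$. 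First I would record, exactly as in the proof of Theorem~\ref{online_op:corollaryreg_bandit}, the moment bounds $\|\hat{\partial}f_{i,t}(x_{i,t})\|\le pF_2$ and $\mathbf{E}[\|\hat{\partial}f_{i,t}(x_{i,t})\|^2\mid\mathcal{U}_{t-1}]\le p^2F_2^2$ (and likewise for the constraint estimator), together with the smoothing errors $|\hat{f}_{i,t}(x)-f_{i,t}(x)|\le F_2\delta_t$ and $\|\widehat{[g_{i,t}]_+}(x)-[g_{i,t}(x)]_+\|\le F_2\delta_t$, all following from the Lipschitz bounds \eqref{online_op:subgupper}. The decisive new ingredient is that ball-smoothing preserves strong convexity: since $f_{i,t}$ is $\mu$-strongly convex by Assumption~\ref{online_op:assstrongconvex}, so is each translate $f_{i,t}(\cdot+\delta_t v)$, hence so is the average $\hat{f}_{i,t}$, with the \emph{same} parameter $\mu$.

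This lets me transplant the descent argument of Theorem~\ref{online_op:corollaryreg_sc} onto the smoothed functions, comparing against the shifted feasible point $\tilde{y}_t=(1-\xi_t)\check{x}^*_T\in(1-\xi_t)\mathbb{X}$. Expanding $\|x_{i,t+1}-\tilde{y}_t\|^2$ from \eqref{online_op:al_x_bandit} via nonexpansiveness of $\calP_{(1-\xi_{t+1})\mathbb{X}}$, taking conditional expectations, invoking strong convexity of $\hat{f}_{i,t}$ at $x_{i,t}$, and using convexity of $[g_{i,t}]_+$ to expose the cross term $-\,q_{i,t}^\top[g_{i,t}(x_{i,t})]_+$, I obtain a per-round inequality of the form
\begin{align*}
\mathbf{E}[\hat{f}_{i,t}(x_{i,t})-\hat{f}_{i,t}(\tilde{y}_t)]
\le{}&\frac{\mathbf{E}[\|z_{i,t+1}-\tilde{y}_t\|^2-\|x_{i,t+1}-\tilde{y}_t\|^2]}{2\alpha_{t+1}}
-\frac{\mu}{2}\mathbf{E}[\|x_{i,t}-\tilde{y}_t\|^2]\\
&+\frac{\alpha_{t+1}}{2}\mathbf{E}[\|\hat{\omega}_{i,t+1}\|^2]
-\mathbf{E}[q_{i,t}^\top[g_{i,t}(x_{i,t})]_+]+(\text{consensus error}).
\end{align*}
Summation by parts of the first term against the doubly stochastic mixing produces a residual $\tfrac12(\alpha_{t+1}^{-1}-\alpha_t^{-1})\|x_{i,t}-\tilde{y}_t\|^2$, and since $\alpha_{t+1}^{-1}-\alpha_t^{-1}=\mathcal{O}(t^{c-1})\to0<\mu$ it is eventually dominated by the strong-convexity surplus $-\tfrac{\mu}{2}\|x_{i,t}-\tilde{y}_t\|^2$, collapsing the comparator-distance contribution to $\mathcal{O}(1)$; this is the mechanism that lowers the regret to $\mathcal{O}(T^\kappa)$. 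Two stepsize conditions then close the bound: the $q$-free part of the bandit second moment contributes $\tfrac{p^2F_2^2}{2}\sum_t\alpha_t=\mathcal{O}(T^{1-c})$, which is $\mathcal{O}(T^\kappa)$ precisely because $c\ge1-\kappa$, while the $q$-dependent part $\tfrac{\alpha_{t+1}}{2}\|\hat{\partial}[g_{i,t}]_+\,q_{i,t}\|^2\le\tfrac{\alpha_{t+1}}{2}p^2F_2^2\|q_{i,t}\|^2$ is absorbed by the $-\beta_t\|q_{i,t}\|^2$ surplus generated when the dual drift of \eqref{online_op:al_q_bandit} is telescoped, an absorption that needs $\alpha_t\lesssim\beta_t$, i.e.\ $c\ge\kappa$.

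The cross term $-q_{i,t}^\top[g_{i,t}(x_{i,t})]_+$ is matched by that same dual drift, handled as in the proof of Theorem~\ref{online_op:corollaryreg_bandit}: squaring \eqref{online_op:al_q_bandit} and telescoping with $\beta_t=t^{-\kappa}$, $\gamma_t=t^{-(1-\kappa)}$ supplies the $-\beta_t\|q_{i,t}\|^2$ surplus used above and, after the consensus estimates relate the agents' iterates, yields the network cumulative violation $\mathcal{O}(T^{1-\kappa/2})$ (via $\tfrac1n\sum_i\sum_t\mathbf{E}\|[g_t(x_{i,t})]_+\|^2=\mathcal{O}(T^{1-\kappa})$ and Cauchy--Schwarz); because this part never invokes strong convexity, it coincides with Corollary~\ref{online_op:theoremstatic_bandit}. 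The remaining distributed error, the network disagreement $\|x_{i,t}-\bar{x}_t\|$ with $\bar{x}_t=\tfrac1n\sum_j x_{j,t}$, is controlled through the geometric mixing of the doubly stochastic $W_t$ guaranteed by Assumption~\ref{online_op:assgraph}; crucially it is amplified by $\|\hat{\omega}_{i,t+1}\|$ and hence by $\|q_{i,t}\|$, so it has to be folded into the same potential rather than bounded in isolation. Finally, converting from the smoothed objects to the true ones, the regret absorbs $\sum_t F_2\delta_t=\mathcal{O}(\log T)$, the shrinkage mismatch $\sum_t F_1\xi_t=\mathcal{O}(\log T)$ from comparing with $\tilde{y}_t$ rather than $\check{x}^*_T$, and the near-feasibility slack of $\tilde{y}_t$ (since $\|[g_t(\tilde{y}_t)]_+\|\le F_1\xi_t$), while the violation absorbs a further $\mathcal{O}(\log T)$ from the sampling radius $\delta_t$; all are dominated by $T^\kappa$ and $T^{1-\kappa/2}$ respectively.

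The main obstacle is that the primal stepsize $\alpha_t=t^{-c}$ is pulled in two directions and must satisfy both simultaneously. It must decay fast enough that the dual regularization $-\beta_t\|q_{i,t}\|^2$ and the strong-convexity surplus $-\tfrac{\mu}{2}\|x_{i,t}-\tilde{y}_t\|^2$ dominate, respectively, the bandit-variance coupling $\alpha_t p^2F_2^2\|q_{i,t}\|^2$ and the telescoping residual; the first requires $\alpha_t\lesssim\beta_t$, i.e.\ $c\ge\kappa$, and the second is automatic once $c<1$. It must also decay slowly enough that the $q$-free variance $\tfrac{p^2F_2^2}{2}\sum_t\alpha_t=\mathcal{O}(T^{1-c})$ stays $\mathcal{O}(T^\kappa)$, which needs $c\ge1-\kappa$; together these pin down $c\in[\max\{\kappa,1-\kappa\},1)$. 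The genuinely delicate point, and the reason the distributed proof is much harder than its centralized counterpart, is that the a priori size of $\|q_{i,t}\|$ is too crude to control either the variance coupling or the disagreement term by magnitude alone; both must be cancelled structurally inside a single Lyapunov argument combining the primal distance, the dual norm, and the consensus error, all under the moving comparator $\tilde{y}_t=(1-\xi_t)\check{x}^*_T$ whose drift $\|\tilde{y}_{t+1}-\tilde{y}_t\|\le R(\mathbb{X})|\xi_{t+1}-\xi_t|=\mathcal{O}(t^{-2})$ must be reabsorbed into an $\mathcal{O}(1)$ budget when the telescope is reorganized by summation by parts.
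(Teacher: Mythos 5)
Your overall skeleton is exactly the paper's: invoke the last part of Lemma~\ref{dbco:lemma:uniformsmoothing} (ball-smoothing preserves $\mu$-strong convexity, proved in the appendix for precisely this purpose), upgrade the convexity step \eqref{online_op:fxy_bandit} to the strongly convex version \eqref{online_op:fxy_sc_bandit}, absorb the telescoping residual $\frac{1}{\alpha_{t+1}}-\frac{1}{\alpha_t}=\mathcal{O}(t^{c-1})$ into the surplus $-\frac{\mu}{2}\|\hat{y}_t-x_{i,t}\|^2$ after the finite burn-in $t\ge\varepsilon_7=\lceil(1/\mu)^{1/(1-c)}\rceil$ (collapsing the comparator-distance contribution from $\mathcal{O}(T^c)$ to $\mathcal{O}(1)$), compare against the shrunk point $(1-\xi_t)\check{x}^*_T$ whose drift $\mathcal{O}(t^{-2})$ is summable against $1/\alpha_{t+1}$, and use $c\ge1-\kappa$ so that $T^{1-c}\le T^{\kappa}$ for the regret while $c\ge\kappa$ keeps $\sum_t\alpha_t\lesssim\sum_t\beta_t$ so the violation analysis of Corollary~\ref{online_op:theoremstatic_bandit} is unharmed. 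All of that matches Appendix~\ref{online_op:corollaryregproof_sc_bandit} together with the machinery it imports from Appendices~\ref{online_op:corollaryregproof_sc} and \ref{online_op:corollaryregproof_bandit}.

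There is, however, one concrete step in your plan that would fail on part of the stated parameter range: your treatment of the dual-coupled variance term. You expand the descent inequality with $\frac{\alpha_{t+1}}{2}\|\hat{\omega}_{i,t+1}\|^2$ and propose to absorb $\frac{\alpha_{t+1}}{2}p^2F_2^2\|q_{i,t}\|^2$ into the dual telescoping surplus, which for $\gamma_t=t^{-(1-\kappa)}$, $\beta_t=t^{-\kappa}$ is $\frac{1}{2}\bigl(\frac{1}{\gamma_t}-\frac{1}{\gamma_{t+1}}+\beta_{t+1}\bigr)\|q_{i,t}\|^2\approx\frac{\kappa}{2}t^{-\kappa}\|q_{i,t}\|^2$. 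This needs $p^2F_2^2\,t^{-c}\le\kappa\,t^{-\kappa}$ eventually; for $c>\kappa$ a finite burn-in suffices, but the theorem explicitly allows $c=\max\{\kappa,1-\kappa\}=\kappa$ whenever $\kappa\ge1/2$, and there the requirement degenerates to the constant comparison $p^2F_2^2\le\kappa$, false in general. The fallback magnitude bound $\|q_{i,t}\|\le\hat{\varepsilon}_1/\beta_t$ gives $\sum_{t=1}^T\alpha_t\|q_{i,t}\|^2=\mathcal{O}(T^{1+\kappa})$, which destroys the bound, so at the boundary your mechanism has no repair by magnitude or by Young's inequality. The paper never generates this term at all: the linearized correction $(\hat{\partial}[g_{i,t}(x_{i,t})]_+)^\top(x_{i,t+1}-x_{i,t})$ deliberately built into the dual update \eqref{online_op:al_q_bandit} makes the primal cross term $\langle\hat{\partial}[g_{i,t}(x_{i,t})]_+q_{i,t},\,x_{i,t}-x_{i,t+1}\rangle$ from \eqref{online_op:fxy1_bandit} cancel \emph{exactly} against $q_{i,t}^\top\hat{b}_{i,t+1}$ in Lemma~\ref{online_op:lemma_virtualbound_bandit}, leaving only the $q$-free residual $\gamma_{t+1}^2\|\hat{b}_{i,t+1}-\beta_{t+1}q_{i,t}\|^2\le4\hat{\varepsilon}_1^2\gamma_{t+1}^2$ via the bounded-dual induction $\|\beta_tq_{i,t}\|\le\hat{\varepsilon}_1$, i.e., a contribution $\mathcal{O}(\sum_t\gamma_t)=\mathcal{O}(T^{\kappa})$ with no coupling to $\alpha_t$. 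Your proposal never identifies this structural role of the correction term, which is the missing idea. A smaller bookkeeping slip: the smoothing and shrinkage errors enter weighted by the dual, as $F_2(R(\mathbb{X})\xi_t+\delta_t)\|q_{i,t}\|\lesssim t^{-1}\cdot t^{\kappa}$, so they sum to $\mathcal{O}(T^{\kappa})$ (see the term $F_2(R(\mathbb{X})+r(\mathbb{X}))\frac{\hat{\varepsilon}_1}{\kappa}T^{\kappa}$ in \eqref{online_op:corollaryregequ1_sc_proof_bandit}), not the $\mathcal{O}(\log T)$ you book; this is harmless for the final $\mathcal{O}(T^{\kappa})$ but should be stated correctly.
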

\begin{proof}
The explicit expressions of the right-hand sides of \eqref{online_op:corollaryregequ1_sc_bandit}--\eqref{online_op:corollaryconsequ_sc_bandit}, and the proof are given in  Appendix~\ref{online_op:corollaryregproof_sc_bandit}.
\end{proof}
\begin{remark}\label{online_op:remark_sc_bandit}
	By comparing Theorem~\ref{online_op:corollaryreg}, Corollary~\ref{online_op:theoremstatic}, and Theorem~\ref{online_op:corollaryreg_sc} with Theorem~\ref{online_op:corollaryreg_bandit}, Corollary~\ref{online_op:theoremstatic_bandit}, and Theorem~\ref{online_op:corollaryreg_sc_bandit}, respectively, we can see that the same network regret and cumulative constraint violation bounds are achieved under the same assumptions. In other words, in an average sense, the distributed online algorithm with two-point bandit feedback (Algorithm~\ref{online_op:algorithm_bandit}) is as efficient as the distributed online algorithm with full-information feedback (Algorithm~\ref{online_op:algorithm}).
\end{remark}

\section{SIMULATIONS}\label{online_opsec:simulation}
In this section, we evaluate the performance of Algorithms~\ref{online_op:algorithm} and \ref{online_op:algorithm_bandit} in solving the distributed online linear regression problem with time-varying linear inequality constraints.

In this problem, the local loss and constraint functions are $f_{i,t}(x)=\frac{1}{2}(H_{i,t}x-z_{i,t})^2$ and $g_{i,t}(x)=A_{i,t}x-a_{i,t}$, respectively, where $H_{i,t}\in\mathbb{R}^{d_i\times p}$, $z_{i,t}\in\mathbb{R}^{d_i}$, $A_{i,t}\in\mathbb{R}^{m_i\times p}$, and $a_{i,t}\in\mathbb{R}^{m_i}$ with $d_i\in\mathbb{N}_+$. Moreover, the constraint set is $\mathbb{X}\subseteq\mathbb{R}^{p}$. At each time $t$, an undirected random graph is used as the communication graph. Specifically, connections between agents are random and the probability of two agents being connected is $\rho$. To guarantee that Assumption~\ref{online_op:assgraph} holds, edges $(i,i+1),~i\in[n-1]$ are also added and $[W_t]_{ij}=\frac{1}{n}$ if $(j,i)\in\mathcal{E}_t$ and $[W_t]_{ii}=1-\sum_{j=1}^n[W_t]_{ij}$.

We set $n=100$, $\rho=0.1$, $d_i=4$, $p=10$, $m_i=2$, and $\mathbb{X}=[-5,5]^p$. Each component of $H_{i,t}$ is generated from the uniform distribution in the interval $[-1,1]$ and $z_{i,t}=H_{i,t}\bm{1}_p+\epsilon_{i,t}$, where $\epsilon_{i,t}$ is a standard normal random vector. Each component of $A_{i,t}$ and $a_{i,t}$ is generated from the uniform distribution in the interval $[0,2]$ and $[0,1]$, respectively.

Noting that there are no other distributed online algorithms to solve the considered problem due to the time-varying constraints, we compare our Algorithms~\ref{online_op:algorithm} and \ref{online_op:algorithm_bandit} with the centralized algorithms with full-information feedback proposed in \cite{sun2017safety,yu2017online,neely2017online}\footnote{The algorithms in \cite{yu2017online,neely2017online} are the same.} and  the centralized algorithm with two-point bandit feedback proposed in \cite{cao2019online}. Fig.~\ref{online_op:fig:reg_alg} and Fig.~\ref{online_op:fig:cons_alg} illustrate the evolutions of the average cumulative loss $\frac{1}{n}\sum_{i=1}^{n}\sum_{t=1}^{T}f_t(x_{i,t})/T$ and the average cumulative constraint violation $\frac{1}{n}\sum_{i=1}^n\sum_{t=1}^T\|[g_{t}(x_{i,t})]_+\|/T$, respectively. Fig.~\ref{online_op:fig:reg_alg} shows that the algorithms with the same kind of information feedback have almost the same average cumulative loss and the algorithms with full-information feedback have smaller average cumulative loss, which are in accordance with the theoretical results. Fig.~\ref{online_op:fig:cons_alg} shows that our proposed algorithms have  smaller average cumulative constraint violation, which also matches the theoretical results since the standard constraint violation metric rather than the more strict metric was used in \cite{sun2017safety,yu2017online,neely2017online,cao2019online}.

\begin{figure}
  \centering
  \includegraphics[width=0.99\linewidth]{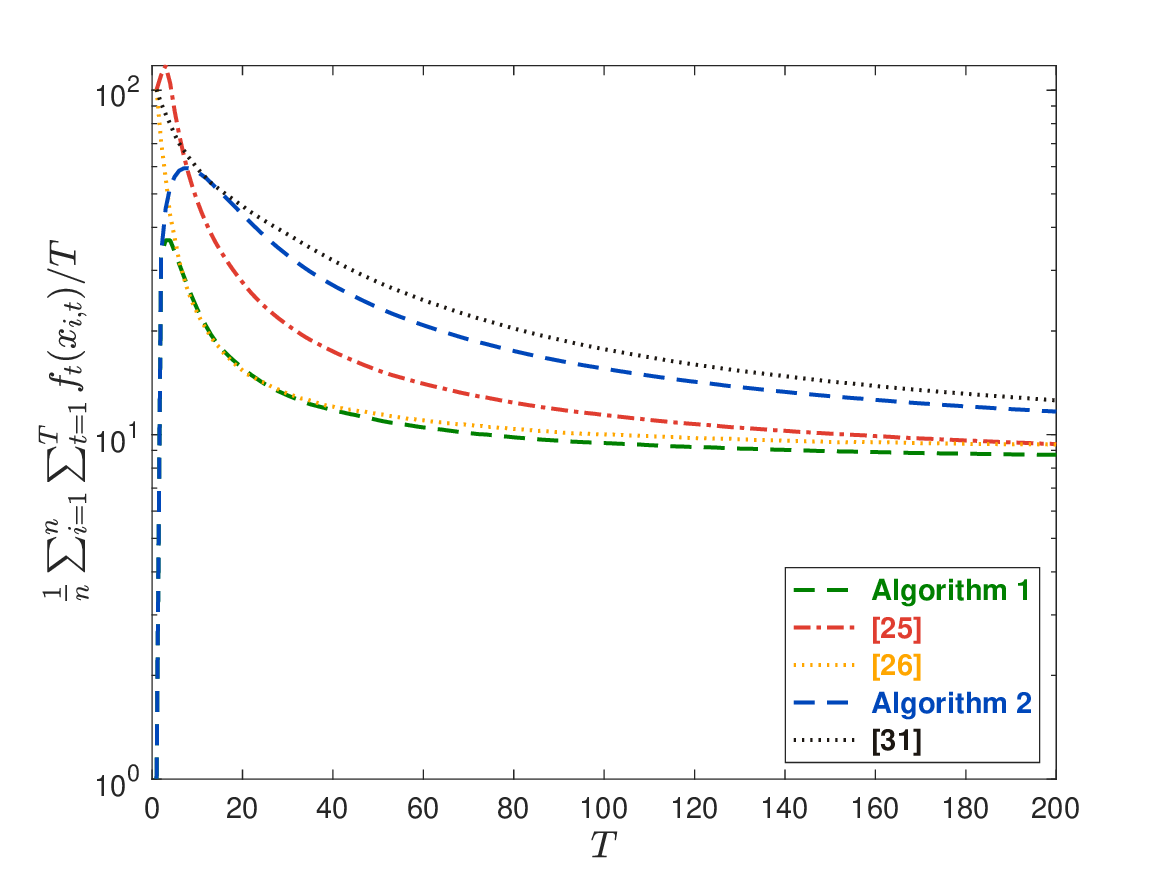}
\caption{Evolutions of $\frac{1}{n}\sum_{i=1}^{n}\sum_{t=1}^{T}f_t(x_{i,t})/T$.}
\label{online_op:fig:reg_alg}
\end{figure}

\begin{figure}
  \centering
  \includegraphics[width=0.99\linewidth]{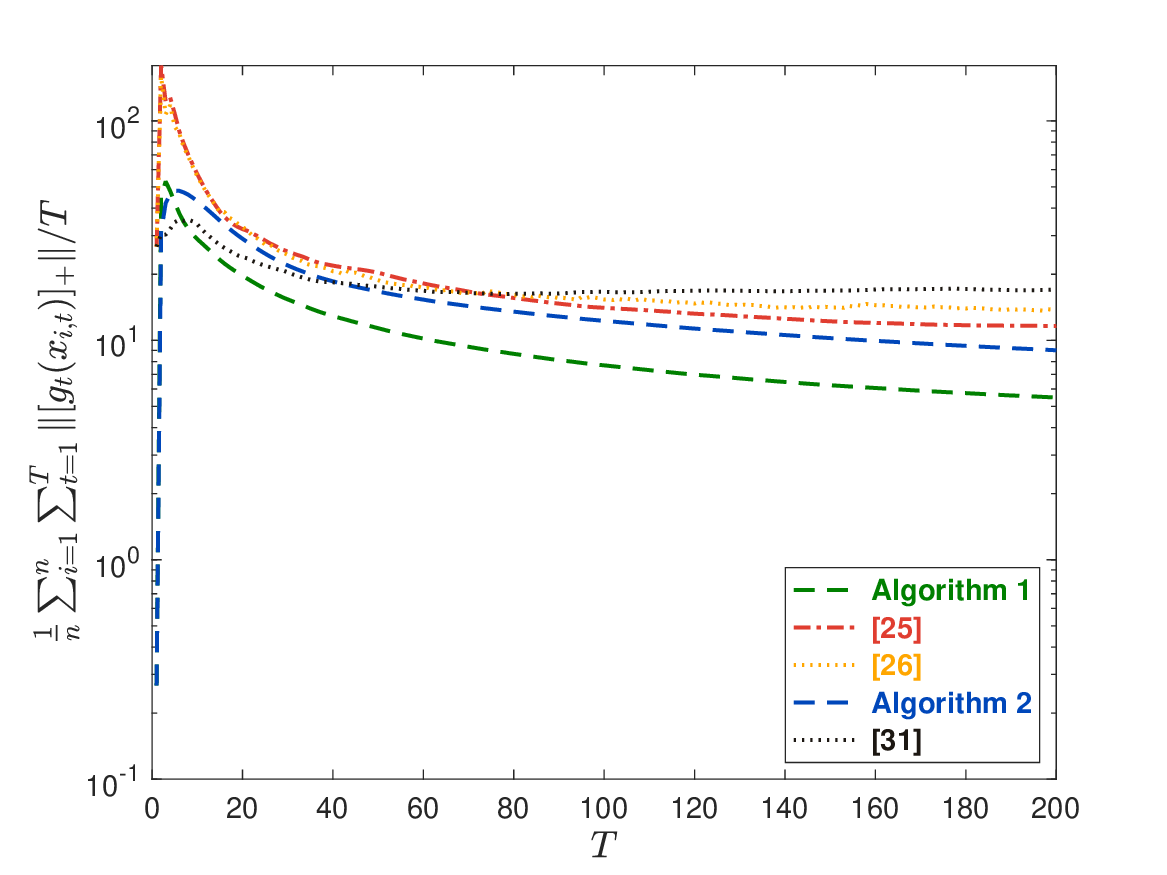}
\caption{Evolutions of $\frac{1}{n}\sum_{i=1}^n\sum_{t=1}^T\|[g_{t}(x_{i,t})]_+\|/T$.}
\label{online_op:fig:cons_alg}
\end{figure}


\section{CONCLUSIONS}\label{online_opsec:conclusion}
In this paper, we considered the distributed online convex optimization problem with time-varying constraints over a network of agents, which incorporates various problems studied in the literature. We proposed two distributed online algorithm to solve this problem and analyzed network regret and cumulative constraint violation bounds for the proposed algorithms under different conditions. Our results can be viewed as nontrivial extensions of existing results, where we considered  distributed and time-varying settings and used the more strict constraint violation metric. In the future, we will design new distributed online algorithms such that the static network regret bound can be further reduced under the strongly convex condition and the network cumulative constraint violation bound can be reduced when the constraint functions satisfy Slater's condition. We will also consider how to reduce communication complexity.


\bibliographystyle{IEEEtran}
\bibliography{refs,refs_bandit}

\appendix\label{online_op:appendix}
\subsection{Useful Lemmas}\label{online_op:app-lemmas}

The following results are used in the proofs.

\begin{lemma}\label{online_op:lemma_stoc}
(\cite{nedic2009distributed,nedic2015distributed})
Let $W_{t}$ be the  adjacency matrix associated with a time-varying graph satisfying Assumption~\ref{online_op:assgraph}.
Then,
\begin{align}
&\Big|[\Psi_{s}^{t}]_{ij}-\frac{1}{n}\Big|\le \tau\lambda^{t-s},~\forall i,j\in[n],~\forall t\ge s\ge1,\label{online_op:lemma_stoceqm}
\end{align}
where $\Psi_{s}^{t}=W_tW_{t-1}\cdots W_s$,  $\tau=(1-w/4n^2)^{-2}>1,$ and $\lambda=(1-w/4n^2)^{1/B}\in(0,1)$.
\end{lemma}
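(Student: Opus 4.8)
The plan is to read this as a standard geometric-ergodicity estimate for backward products of doubly stochastic matrices and to prove it via a per-window contraction of the \emph{disagreement}. First I would record the structural consequences of Assumption~\ref{online_op:assgraph}(b): since each $W_t$ is doubly stochastic, so is the backward product $\Psi_s^t=W_tW_{t-1}\cdots W_s$; in particular every column of $\Psi_s^t$ sums to $1$, so its entries have average exactly $\frac1n$, and $\frac1n\mathbf{1}_n\mathbf{1}_n^\top$ is a fixed point of left-multiplication by any $W_t$. Consequently, bounding $|[\Psi_s^t]_{ij}-\frac1n|$ over all $i$ reduces, column by column, to controlling the deviation of the vector $\Psi_s^t e_j$ from its mean, i.e. to bounding the disagreement seminorm $\|y-\tfrac1n(\mathbf{1}_n^\top y)\mathbf{1}_n\|$ applied to $y=\Psi_s^t e_j$.

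The heart of the argument is a single-window contraction. I would fix the window $[s,s+B-1]$ and show that
\begin{align*}
\Big\|\Psi_s^{s+B-1}y-\tfrac1n(\mathbf{1}_n^\top y)\mathbf{1}_n\Big\|
\le\Big(1-\frac{w}{4n^2}\Big)\Big\|y-\tfrac1n(\mathbf{1}_n^\top y)\mathbf{1}_n\Big\|
\end{align*}
for every $y\in\mathbb{R}^n$. Two facts feed into this. Each doubly stochastic $W_t$ preserves the mean $\frac1n\mathbf{1}_n^\top y$ and is non-expansive for the disagreement seminorm, so over the window the seminorm can only decrease. To get the \emph{strict} decrease with the stated constant I would work with the squared seminorm $V(y)=\|y-\tfrac1n(\mathbf{1}_n^\top y)\mathbf{1}_n\|^2$ and quantify the one-step decrement $V(y)-V(W_t y)$ as a weighted sum of squared differences $([y]_i-[y]_k)^2$ across the edges active at round $t$, each weight being at least $w$ by Assumption~\ref{online_op:assgraph}(a). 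Assumption~\ref{online_op:assgraph}(c) makes the union of the active edge sets over the window strongly connected, so a path/telescoping argument (each path has length at most $n-1$) bounds the global disagreement $V(y)$ by the accumulated edge decrements, losing a factor of order $n^2$ and yielding $V(\Psi_s^{s+B-1}y)\le(1-\frac{w}{4n^2})^2V(y)$, which is the displayed norm contraction after taking square roots.

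Granting the single-window contraction, the rest is bookkeeping. Writing $t-s=qB+r$ with $q=\lfloor(t-s)/B\rfloor$ and $0\le r<B$, I would apply the contraction over the $q$ full windows and non-expansiveness over the leftover $r$ rounds to get $\|\Psi_s^t e_j-\tfrac1n\mathbf{1}_n\|\le(1-\frac{w}{4n^2})^q\|e_j-\tfrac1n\mathbf{1}_n\|$. Since $\|e_j-\tfrac1n\mathbf{1}_n\|^2=1-\tfrac1n\le1$ and $q\ge(t-s)/B-1$, the entrywise deviation obeys
\begin{align*}
\Big|[\Psi_s^t]_{ij}-\tfrac1n\Big|
\le\Big(1-\frac{w}{4n^2}\Big)^{q}
\le\Big(1-\frac{w}{4n^2}\Big)^{(t-s)/B-1}
=\Big(1-\frac{w}{4n^2}\Big)^{-1}\lambda^{t-s}
\le\tau\lambda^{t-s},
\end{align*}
with $\lambda=(1-\frac{w}{4n^2})^{1/B}$ and $\tau=(1-\frac{w}{4n^2})^{-2}$, as claimed; the prefactor $\tau$ also covers the degenerate range $t-s<B$ where $q=0$.

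I expect the single obstacle to be the per-window contraction estimate with the sharp constant $\frac{w}{4n^2}$. Establishing the one-step energy decrement and then converting the \emph{local}, edge-wise decreases into a decrease of the \emph{global} quadratic disagreement requires the combinatorial path argument tied to $B$-window strong connectivity (not strong connectivity of a single graph), and carefully tracking the diameter-induced $n^2$ factor together with the weight lower bound $w$ is the delicate part. An alternative route replaces $V$ by the Dobrushin ergodicity coefficient $\delta(A)=1-\min_{i,k}\sum_\ell\min(A_{i\ell},A_{k\ell})$ and uses its submultiplicativity $\delta(AB)\le\delta(A)\delta(B)$ together with a scrambling argument over a window; this yields the same geometric decay and, via $|[\Psi_s^t]_{ij}-\frac1n|\le 2\delta(\Psi_s^t)$ and column-mean $\frac1n$, the same conclusion, with the crux again being the per-window bound on $\delta$.
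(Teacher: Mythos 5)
The paper itself offers no proof of this lemma---it is imported verbatim, with its constants, from \cite{nedic2009distributed,nedic2015distributed}---so the relevant comparison is with the arguments in those references, and your plan is essentially a faithful reconstruction of them: reduce to columns of $\Psi_s^t$ via double stochasticity of the product (column means equal $\frac{1}{n}$), use mean-preservation and non-expansiveness of the disagreement seminorm for each $W_t$ (Birkhoff gives $\|W_t\|\le 1$), prove a strict contraction over each length-$B$ window, and assemble $q=\lfloor(t-s)/B\rfloor$ window contractions into $\tau\lambda^{t-s}$. Your final bookkeeping is sound: $q\ge (t-s)/B-1$ costs only a factor $(1-w/4n^2)^{-1}\le\tau$, and $\|e_j-\frac{1}{n}\mathbf{1}_n\|\le 1$, so the stated $\tau=(1-w/4n^2)^{-2}$ and $\lambda=(1-w/4n^2)^{1/B}$ come out exactly, including the degenerate range $t-s<B$.

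Two cautions on the crux you yourself flag. First, your parenthetical ``each weight being at least $w$'' is not what the natural one-step identity gives: for doubly stochastic $W$ and mean-zero $y$ one has $V(y)-V(Wy)=\sum_i\sum_{j<k}W_{ij}W_{ik}(y_j-y_k)^2$, so the per-edge weights are \emph{products} of two entries and are only bounded below by $w^2$ (pairing an active edge with the diagonal entry $[W_t]_{ii}\ge w$, which Assumption~\ref{online_op:assgraph} does guarantee). With these naive weights the path argument still yields geometric decay, but with $w^2$ in place of $w$, hence \emph{not} the constants stated in the lemma. The linear-in-$w$ window decrement $V\mapsto(1-\frac{w}{2n^2})V$---which after $\sqrt{1-x}\le 1-x/2$ gives your norm factor $1-\frac{w}{4n^2}$---requires the finer sorting/cut argument of the cited Nedi\'c--Olshevsky line of work, which your sketch gestures at but does not supply; this is a genuine gap if the goal is the lemma's exact constants rather than some geometric bound. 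Second, the Dobrushin alternative does not close over a single $B$-window: $B$-window strong connectivity only propagates information one hop along each required edge, so $\Psi$ over one window need not be scrambling ($\delta(\Psi)<1$ typically needs windows of length on the order of $(n-1)B$, exploiting the positive diagonals); submultiplicativity then works, but again with different constants. So the quadratic-disagreement route is the right one for windows of length exactly $B$, and the missing piece is precisely the sharp per-window estimate.
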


\begin{lemma}\label{online_op:lemma_projection}
(Lemma~3  in \cite{yi2019distributed})
Let $\mathbb{K}$ be a nonempty closed convex subset of $\mathbb{R}^{p}$ and let $a,~b,~c$ be three vectors in $\mathbb{R}^{p}$. The following statements hold.
\begin{enumerate}[label=(\alph*)]

\item If $a\le b$, then
\begin{align}
\|[a]_+\|\le\|b\|~\text{and}~[a]_+\le[b]_+.\label{dbco:lemma:projection:ab}
\end{align}

\item If $x_1=\calP_{\mathbb{K}}(c-a)$, then
\begin{align}
&2\langle x_1-y,a \rangle
\le\|y-c\|^2-\|y-x_1\|^2-\|x_1-c\|^2,~\forall y\in\mathbb{K}.\label{dbco:lemma:projection:xy}
\end{align}
\end{enumerate}
\end{lemma}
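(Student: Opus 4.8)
The plan is to treat the two parts separately, since (a) is a purely componentwise statement about the positive-part operator $[\cdot]_+=\mathcal{P}_{\mathbb{R}^p_+}(\cdot)$, while (b) is the standard variational inequality for Euclidean projection onto a convex set. Recall from the Notations that all inequalities are understood componentwise and that $[\cdot]_+$ acts coordinatewise as $[v]_{+,i}=\max\{v_i,0\}$.

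For part (a), I would argue entirely coordinate by coordinate. The hypothesis $a\le b$ means $a_i\le b_i$ for every $i$, and the scalar map $t\mapsto\max\{t,0\}$ is nondecreasing, so $\max\{a_i,0\}\le\max\{b_i,0\}$ for each $i$; this is precisely $[a]_+\le[b]_+$, the second assertion. For the norm bound I would chain the elementary inequalities $0\le[a]_{+,i}\le[b]_{+,i}\le|b_i|$, where the last step uses $\max\{b_i,0\}\le|b_i|$. Squaring and summing over $i$ then gives $\|[a]_+\|^2=\sum_i[a]_{+,i}^2\le\sum_i b_i^2=\|b\|^2$, and taking square roots yields $\|[a]_+\|\le\|b\|$.

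For part (b), the key tool is the first-order optimality condition for the projection. Since $x_1=\mathcal{P}_{\mathbb{K}}(c-a)$ minimizes $y\mapsto\|y-(c-a)\|^2$ over the closed convex set $\mathbb{K}$, it satisfies the variational inequality $\langle (c-a)-x_1,\,y-x_1\rangle\le0$ for all $y\in\mathbb{K}$, which I would immediately rewrite as $\langle x_1-(c-a),\,x_1-y\rangle\le0$. In parallel I would expand the target right-hand side via the identity $\|y-c\|^2=\|y-x_1\|^2+2\langle y-x_1,\,x_1-c\rangle+\|x_1-c\|^2$, so that $\|y-c\|^2-\|y-x_1\|^2-\|x_1-c\|^2$ collapses to $2\langle y-x_1,\,x_1-c\rangle$. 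The desired inequality $2\langle x_1-y,\,a\rangle\le 2\langle y-x_1,\,x_1-c\rangle$ then rearranges to $\langle x_1-y,\,a+x_1-c\rangle\le0$, i.e. $\langle x_1-y,\,x_1-(c-a)\rangle\le0$, which is exactly the rewritten optimality condition.

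Neither part presents a genuine obstacle; both are standard facts. The only points demanding care are the componentwise bookkeeping in (a)---in particular the middle step $[b]_{+,i}\le|b_i|$---and, in (b), matching signs and arguments when converting the projection's variational inequality into the precise inner-product form that cancels against the expanded norm differences. I would present (b) by writing the optimality condition and the expanded norm identity side by side and observing that their combination is a pure rearrangement, so no estimation beyond the single projection inequality is required.
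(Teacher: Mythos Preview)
Your proof is correct in both parts: the componentwise monotonicity argument for (a) and the projection variational inequality plus the three-point identity for (b) are exactly the standard routes, and your sign bookkeeping in (b) checks out. Note, however, that the paper does not supply its own proof of this lemma at all---it simply cites it as Lemma~3 in \cite{yi2019distributed}---so there is no in-paper argument to compare against; your write-up would serve as a self-contained substitute for that citation.
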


\begin{lemma}\label{dbco:lemma:uniformsmoothing}
Let $f:\mathbb{K}\rightarrow\mathbb{R}^m$ be a vector-valued function with $\mathbb{K}\subset\mathbb{R}^p$ being a convex and closed set. Moreover, there exists  $r(\mathbb{K})>0$ such that $r(\mathbb{K})\mathbb{B}^p\subseteq\mathbb{K}$. Denote
\begin{align*}
\hat{\partial} f(x)&=\frac{p}{\delta}(f(x+\delta u)-f(x))^\top\otimes u,~\forall x\in(1-\xi)\mathbb{K},\\
\hat{f}(x)&=\mathbf{E}_{v\in\mathbb{B}^p}[f(x+\delta v)],~\forall x\in(1-\xi)\mathbb{K},
\end{align*}
where $u\in\mathbb{S}^p$ is a uniformly distributed random vector, $\delta\in(0,r(\mathbb{K})\xi]$, $\xi\in(0,1)$, and the expectation is taken with respect to uniform distribution. The following statements hold.
\begin{enumerate}[label=(\alph*)]
\item The function $\hat{f}$ is differentiable on $(1-\xi)\mathbb{K}$ and
\begin{align*}
\partial \hat{f}(x)
=\mathbf{E}_{u\in\mathbb{S}^p}[\hat{\partial}f(x)],~\forall x\in(1-\xi)\mathbb{K}.
\end{align*}

\item If $f$ is convex on $\mathbb{K}$, then $\hat{f}$ is convex on $(1-\xi)\mathbb{K}$ and
\begin{align*}
f(x)\le \hat{f}(x),~\forall x\in(1-\xi)\mathbb{K}.
\end{align*}

\item If $f$ is Lipschitz-continuous on $\mathbb{K}$ with constant $L_0(f)>0$, then $\hat{f}$ is Lipschitz-continuous on $(1-\xi)\mathbb{K}$ with constants $L_0(f)$. Moreover, for all $x\in(1-\xi)\mathbb{K}$,
\begin{align*}
\|\hat{f}(x)-f(x)\|\le\delta L_0(f),~\|\hat{\partial}f(x)\|\le pL_0(f).
\end{align*}

\item If $f$ is bounded on $\mathbb{K}$, i.e., there exists $F_0(f)>0$ such that $\|f(x)\|\le F_0(f),~\forall x\in\mathbb{K}$, then
\begin{align*}
\|\hat{f}(x)\|\le F_0(f),~\forall x\in(1-\xi)\mathbb{K}.
\end{align*}

\item If $f$ is strongly convex with constant $\mu>0$ over $\mathbb{K}$, then $\hat{f}$ is strongly convex with constant $\mu>0$ over $(1-\xi)\mathbb{K}$.
\end{enumerate}
\end{lemma}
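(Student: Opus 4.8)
The plan is to treat the five claims in turn, with part (a) being the analytic heart and parts (b)--(e) following from averaging arguments once the integral representation is in place. Throughout I write $\hat f$ as a ball average: since $v$ is uniform on $\mathbb{B}^p$, $\hat f(x)=\frac{1}{\mathrm{vol}(\delta\mathbb{B}^p)}\int_{\delta\mathbb{B}^p}f(x+w)\,dw$. A preliminary observation I would record first is that this averaging is well defined on $(1-\xi)\mathbb{K}$ and that all sampled points remain feasible: for $x=(1-\xi)y$ with $y\in\mathbb{K}$ and any $w\in\delta\mathbb{B}^p$, the bound $\delta\le r(\mathbb{K})\xi$ gives $\tfrac{1}{\xi}w\in r(\mathbb{K})\mathbb{B}^p\subseteq\mathbb{K}$, so $x+w=(1-\xi)y+\xi(\tfrac{1}{\xi}w)\in\mathbb{K}$ by convexity; the same computation justifies that every sampled point $x+\delta u$ in the estimator lies in $\mathbb{K}$.

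For part (a), I would differentiate under the integral sign componentwise and apply the divergence theorem to each scalar component of $f$, converting the ball integral of the gradient into a boundary (sphere) integral; in the matrix layout of the lemma this yields $\partial\hat f(x)=\frac{p}{\delta}\,\mathbf{E}_{u\in\mathbb{S}^p}[(f(x+\delta u))^\top\otimes u]$. Comparing with $\hat\partial f(x)=\frac{p}{\delta}(f(x+\delta u)-f(x))^\top\otimes u$ and taking expectation over $u$, the extra term $-\frac{p}{\delta}(f(x))^\top\otimes\mathbf{E}_{u\in\mathbb{S}^p}[u]$ vanishes because the uniform distribution on the sphere is symmetric, i.e. $\mathbf{E}_{u\in\mathbb{S}^p}[u]=\bm 0_p$. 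This gives $\partial\hat f(x)=\mathbf{E}_{u\in\mathbb{S}^p}[\hat\partial f(x)]$, and differentiability follows since the sphere integral depends continuously on $x$.

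Parts (b)--(d) are averaging arguments. Convexity of $\hat f$ holds because for each fixed $v$ the map $x\mapsto f(x+\delta v)$ is convex and expectation preserves convexity; the inequality $f(x)\le\hat f(x)$ is Jensen's inequality applied componentwise together with $\mathbf{E}_{v\in\mathbb{B}^p}[v]=\bm 0_p$. For (c), averaging preserves the Lipschitz constant; $\|\hat f(x)-f(x)\|\le\mathbf{E}_{v}\|f(x+\delta v)-f(x)\|\le\delta L_0(f)$ uses $\|v\|\le1$, and the estimator bound uses that $(f(x+\delta u)-f(x))^\top\otimes u$ is a rank-one matrix of $2$-norm $\|f(x+\delta u)-f(x)\|\,\|u\|\le\delta L_0(f)$, whence $\|\hat\partial f(x)\|\le\frac{p}{\delta}\,\delta L_0(f)=pL_0(f)$. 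For (d), $\|\hat f(x)\|\le\mathbf{E}_v\|f(x+\delta v)\|\le F_0(f)$, which is legitimate precisely because the preliminary feasibility check guarantees $x+\delta v\in\mathbb{K}$.

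For part (e), I would use the characterization that $f$ is strongly convex with constant $\mu$ iff $x\mapsto f(x)-\frac{\mu}{2}\|x\|^2$ is convex (componentwise). Averaging the convex map $w\mapsto f(x+\delta w)-\frac{\mu}{2}\|x+\delta w\|^2$ over $w\in\mathbb{B}^p$ and expanding $\mathbf{E}_{v}\|x+\delta v\|^2=\|x\|^2+\delta^2\mathbf{E}_v\|v\|^2$ (the cross term drops out again by $\mathbf{E}_v[v]=\bm 0_p$) shows that $\hat f(x)-\frac{\mu}{2}\|x\|^2$ differs from a convex function by the additive constant $\frac{\mu}{2}\delta^2\mathbf{E}_v\|v\|^2$, hence is itself convex, which is exactly strong convexity of $\hat f$ with constant $\mu$ on $(1-\xi)\mathbb{K}$. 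I expect the main obstacle to be part (a): making the interchange of differentiation and integration rigorous and correctly invoking the divergence theorem on each component to turn the ball integral of the gradient into the sphere integral matching the two-point estimator; the remaining parts reduce to Jensen's inequality, symmetry of the uniform measures, and the rank-one norm computation.
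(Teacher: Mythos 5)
Your proposal is correct, but it is worth noting how it relates to what the paper actually does: the paper's own proof is almost entirely a citation --- parts (a)--(d) are delegated to Lemma~2 of the reference \cite{yi2019distributed} (whose content is exactly the standard Flaxman--Kalai--McMahan/Shamir smoothing argument you reconstruct: Stokes' theorem for the gradient identity, Jensen for $f\le\hat f$, and averaging for the Lipschitz and boundedness claims), and only part (e) is proved in-house. For (e) your route genuinely differs from the paper's: the paper applies the first-order strong-convexity inequality $f(x+\delta v)-f(y+\delta v)\ge\langle x-y,\partial f(y+\delta v)\rangle+\frac{\mu}{2}\|x-y\|^2$ pointwise inside the expectation and then uses linearity together with the interchange $\mathbf{E}_{v}[\partial f(y+\delta v)]=\partial\hat f(y)$, which directly yields the strong-convexity inequality in the form with $\partial\hat f$ that is used downstream in Appendix~E; you instead use the equivalence that $\mu$-strong convexity means convexity of $x\mapsto f(x)-\frac{\mu}{2}\|x\|^2$, average, and observe that $\mathbf{E}_v\|x+\delta v\|^2-\|x\|^2$ is the constant $\delta^2\mathbf{E}_v\|v\|^2$ since $\mathbf{E}_v[v]={\bm 0}_p$. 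Your version buys a small technical simplification --- it avoids choosing a measurable subgradient selection $v\mapsto\partial f(y+\delta v)$ and justifying its interchange with the expectation, since convexity is preserved by integration without any such selection --- at the cost of one extra step if one later wants the explicit subgradient inequality for $\hat f$. Two minor points: your feasibility check ($x+w\in\mathbb{K}$ for $x\in(1-\xi)\mathbb{K}$, $w\in\delta\mathbb{B}^p$, using $\delta\le r(\mathbb{K})\xi$) is a welcome addition the paper leaves implicit; and in (a), since the lemma assumes no differentiability of $f$, the cleaner rigorous route is to apply Stokes to the ball average $\hat f$ itself (gradient of the ball average equals the sphere average of $f$ times the outward unit normal), rather than to the ball integral of $\nabla f$ as your phrasing suggests --- though where the lemma is invoked the functions are Lipschitz, so both versions go through.
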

\begin{proof}
The proofs of the first four parts follow the proof of Lemma~2 in \cite{yi2019distributed} since they extend Lemma~2 in \cite{yi2019distributed} from scalar-valued functions to vector-valued functions.

For any $x,y\in(1-\xi)\mathbb{K}$, we have
\begin{align*}
\hat{f}(x)-\hat{f}(y)&=\mathbf{E}_{v\in\mathbb{B}^p}[f(x+\delta v)-f(y+\delta v)]\\
&\ge\mathbf{E}_{v\in\mathbb{B}^p}\Big[\langle x-y,\partial f(y+\delta v)\rangle+\frac{\mu}{2}\|x-y\|^2\Big]\\
&=\langle x-y,\mathbf{E}_{v\in\mathbb{B}^p}[\partial f(y+\delta v)]\rangle+\frac{\mu}{2}\|x-y\|^2\\
&=\langle x-y,\partial \hat{f}(y)\rangle+\frac{\mu}{2}\|x-y\|^2.
\end{align*}
Thus, the last part holds.
\end{proof}

\subsection{Proof of Theorem~\ref{online_op:corollaryreg}}\label{online_op:corollaryregproof}
Denote $\bar{x}_{t}=\frac{1}{n}\sum_{i=1}^nx_{i,t}$, $\epsilon^x_{i,t-1}=x_{i,t}-z_{i,t}$, $\Delta_{i,t}(\mu_i)=\frac{1}{2\gamma_t}(\|q_{i,t}-\mu_i\|^2
-(1-\beta_t\gamma_t)\|q_{i,t-1}-\mu_i\|^2)$ with $\mu_i$ being an arbitrary vector in $\mathbb{R}_+^{m_i}$, $b_{i,t}=[g_{i,t-1}(x_{i,t-1})]_+ +(\partial [g_{i,t-1}(x_{i,t-1})]_+)^\top(x_{i,t}-x_{i,t-1})$, $\varepsilon_{1}=2(F_1+F_2R(\mathbb{X}))^2$, $\varepsilon_2=\frac{\tau}{\lambda(1-\lambda)}\sum_{i=1}^n\|x_{i,1}\|$, $\varepsilon_3=2F_2+\frac{n^2F_2\tau^2}{2(1-\lambda)^2}$, $\varepsilon_4=2F_2\varepsilon_3+\frac{F_2^2}{4}$, $\varepsilon_5=2F_2\varepsilon_3+\varepsilon_4$, $\varepsilon_6=40\varepsilon_5$,  and $\mu_{ij}^0=\frac{\sum_{t=1}^T[g_{i,t}(x_{j,t})]_+}
{\frac{1}{\gamma_1}
+\sum_{t=1}^T(\beta_{t}+2\varepsilon_6\alpha_{t})}$.

To prove Theorem~\ref{online_op:corollaryreg}, we need some preliminary results.
Firstly, we quantify the disagreement among agents. 
\begin{lemma}\label{online_op:lemma_neterror}
If Assumption~\ref{online_op:assgraph} holds. For all $i\in[n]$ and $t\in\mathbb{N}_+$, $x_{i,t}$ generated by Algorithm~\ref{online_op:algorithm} satisfy
\begin{align}
\|x_{i,t}-\bar{x}_{t}\|
&\le \tau \lambda^{t-2}\sum_{j=1}^n\|x_{j,1}\|
+\frac{1}{n}\sum_{j=1}^n\|\epsilon^x_{j,t-1}\|
+\|\epsilon^x_{i,t-1}\|+\tau\sum_{s=1}^{t-2}\lambda^{t-s-2}\sum_{j=1}^n\|\epsilon^x_{j,s}\|.
\label{online_op:xitxbar}
\end{align}
\end{lemma}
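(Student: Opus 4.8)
The plan is to unroll the primal update \eqref{online_op:al_x} so that $x_{i,t}$ is expressed purely in terms of the initial conditions, the consensus mixing matrices, and the accumulated perturbation terms $\epsilon^x_{i,s}$, and then to compare this expression against the network average $\bar{x}_t$. First I would observe that since $x_{i,t}=z_{i,t}+\epsilon^x_{i,t-1}$ and $z_{i,t+1}=\sum_{j}[W_t]_{ij}x_{j,t}$, substituting the first into the second gives the linear recursion $x_{i,t+1}=\sum_{j}[W_t]_{ij}x_{j,t}+\epsilon^x_{i,t}$. Iterating this recursion back to $t=1$ and collecting the transition matrices into the products $\Psi_s^t=W_tW_{t-1}\cdots W_s$ from Lemma~\ref{online_op:lemma_stoc} yields
\begin{align*}
x_{i,t}=\sum_{j=1}^n[\Psi_1^{t-1}]_{ij}x_{j,1}+\sum_{s=1}^{t-2}\sum_{j=1}^n[\Psi_{s+1}^{t-1}]_{ij}\epsilon^x_{j,s}+\epsilon^x_{i,t-1}.
\end{align*}
The key point is that the last term $\epsilon^x_{i,t-1}$ sits outside any mixing matrix because it is injected at the final step.

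Next I would write the corresponding expression for the average $\bar x_t=\frac1n\sum_i x_{i,t}$. Because Assumption~\ref{online_op:assgraph}(b) makes every $W_t$ doubly stochastic, the columns of each $\Psi_s^t$ sum to one, so averaging over $i$ replaces each entry $[\Psi_s^{t-1}]_{ij}$ by exactly $\tfrac1n$. Thus
\begin{align*}
\bar x_t=\frac1n\sum_{j=1}^n x_{j,1}+\sum_{s=1}^{t-2}\frac1n\sum_{j=1}^n\epsilon^x_{j,s}+\frac1n\sum_{j=1}^n\epsilon^x_{j,t-1}.
\end{align*}
Subtracting, the difference $x_{i,t}-\bar x_t$ has, for each historical index $s$ (and for the initial condition), a coefficient of the form $[\Psi_{s+1}^{t-1}]_{ij}-\tfrac1n$, to which I apply the geometric decay bound $|[\Psi_s^t]_{ij}-\tfrac1n|\le\tau\lambda^{t-s}$ from Lemma~\ref{online_op:lemma_stoc}. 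The isolated final-step term $\epsilon^x_{i,t-1}-\tfrac1n\sum_j\epsilon^x_{j,t-1}$ contributes the two non-decaying summands $\|\epsilon^x_{i,t-1}\|$ and $\tfrac1n\sum_j\|\epsilon^x_{j,t-1}\|$ directly via the triangle inequality.

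Finally I would assemble the bound by taking norms throughout and applying the triangle inequality term by term. The initial-condition piece is controlled by $\tau\lambda^{t-2}\sum_j\|x_{j,1}\|$, matching the first summand of \eqref{online_op:xitxbar}; the summed historical perturbations give $\tau\sum_{s=1}^{t-2}\lambda^{t-s-2}\sum_j\|\epsilon^x_{j,s}\|$ after aligning the exponent offsets coming from the index shift $\Psi_{s+1}^{t-1}$; and the final-step terms give the remaining two summands. I expect the main bookkeeping obstacle to be getting the exponents of $\lambda$ exactly right, since the product $\Psi_{s+1}^{t-1}$ spans the indices from $s+1$ to $t-1$, so Lemma~\ref{online_op:lemma_stoc} supplies a factor $\lambda^{(t-1)-(s+1)}=\lambda^{t-s-2}$ rather than $\lambda^{t-s}$; one must track this shift carefully and verify that the boundary cases $s=t-2$ and the initial term fit the stated form. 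No deep estimate is needed beyond Lemma~\ref{online_op:lemma_stoc}, so the argument is essentially a careful induction-free unrolling plus clean index arithmetic.
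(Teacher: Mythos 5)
Your proposal is correct and takes essentially the same route as the paper's proof: you unroll the recursion $x_{i,t+1}=\sum_{j=1}^n[W_t]_{ij}x_{j,t}+\epsilon^x_{i,t}$ into the products $\Psi_s^{t-1}$, use double stochasticity to write $\bar{x}_t$ with uniform weights $\tfrac{1}{n}$, subtract, and apply the geometric bound $|[\Psi_s^t]_{ij}-\tfrac{1}{n}|\le\tau\lambda^{t-s}$ from Lemma~\ref{online_op:lemma_stoc}. Your index bookkeeping---the shift $\Psi_{s+1}^{t-1}$ yielding $\lambda^{t-s-2}$, the initial-condition factor $\lambda^{t-2}$, and the final-step term $\epsilon^x_{i,t-1}$ handled separately by the triangle inequality---matches the paper's argument exactly.
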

\begin{proof}

\noindent Noting that  $\epsilon^x_{i,t-1}=x_{i,t}-z_{i,t}$, we can rewrite (\ref{online_op:al_z}) as
\begin{align*}
x_{i,t}=\sum_{j=1}^n[W_{t-1}]_{ij}x_{j,t-1}+\epsilon^x_{i,t-1}.
\end{align*}
Hence,
\begin{align*}
x_{i,t}=\sum_{j=1}^n[\Psi_{1}^{t-1}]_{ij}x_{j,1}+\epsilon^x_{i,t-1}
+\sum_{s=1}^{t-2}\sum_{j=1}^n[\Psi_{s+1}^{t-1}]_{ij}\epsilon^x_{j,s},
\end{align*}
where $\Psi_{s}^t$ is defined in Lemma \ref{online_op:lemma_stoc}.
Noting that $W_t$ is doubly stochastic for all $t\in\mathbb{N}_+$, from the above equality we have
\begin{align*}
\bar{x}_{t}=\frac{1}{n}\sum_{j=1}^nx_{j,1}
+\frac{1}{n}\sum_{j=1}^n\epsilon^x_{j,t-1}+\frac{1}{n}\sum_{s=1}^{t-2}\sum_{j=1}^n\epsilon^x_{j,s}.
\end{align*}
Thus,
\begin{align*}
x_{i,t}-\bar{x}_{t}
=\sum_{j=1}^n\Big([\Psi_{1}^{t-1}]_{ij}-\frac{1}{n}\Big)x_{j,1}
+\frac{1}{n}\sum_{j=1}^n(\epsilon^x_{i,t-1}-\epsilon^x_{j,t-1})
+\sum_{s=1}^{t-2}\sum_{j=1}^n\Big([\Psi_{s+1}^{t-1}]_{ij}-\frac{1}{n}\Big)
\epsilon^x_{j,s},
\end{align*}
which further implies
\begin{align*}
\|x_{i,t}-\bar{x}_{t}\|
&\le\sum_{j=1}^n\Big|[\Psi_{1}^{t-1}]_{ij}-\frac{1}{n}\Big|\|x_{j,1}\|
+\frac{1}{n}\sum_{j=1}^n(\|\epsilon^x_{i,t-1}\|+\|\epsilon^x_{j,t-1}\|)
+\sum_{s=1}^{t-2}\sum_{j=1}^n\Big|[\Psi_{s+1}^{t-1}]_{ij}-\frac{1}{n}\Big|
\|\epsilon^x_{j,s}\|.
\end{align*}
Hence, from (\ref{online_op:lemma_stoceqm}) and the above inequality, we have \eqref{online_op:xitxbar}.
\end{proof}

Then, we present a result on the evolution of local dual variables, which is critical to analyze the performance of Algorithm~\ref{online_op:algorithm}.
\begin{lemma}\label{online_op:lemma_virtualbound}
Suppose Assumptions~\ref{online_op:assfunction}--\ref{online_op:assgraph} hold and $\gamma_{t}\beta_{t}\le1,~t\in\mathbb{N}_+$. For all $i\in[n]$ and $t\in\mathbb{N}_+$, the sequences $q_{i,t}$ generated by Algorithm~\ref{online_op:algorithm} satisfy
\begin{align}
\Delta_{i,t}(\mu_i)&\le \varepsilon_{1}\gamma_t
+q_{i,t-1}^\top b_{i,t}-\mu_i^\top[g_{i,t-1}(x_{i,t-1})]_+ 
+\frac{1}{2}\beta_t\|\mu_i\|^2+F_2\|\mu_i\|\|x_{i,t}-x_{i,t-1}\|.
\label{online_op:gvirtualnorm}
\end{align}
\end{lemma}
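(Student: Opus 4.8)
The plan is to treat $\tfrac12\|q_{i,t}-\mu_i\|^2$ as a Lyapunov function and run a one-step drift estimate, exploiting that the dual update \eqref{online_op:al_q} is a Euclidean projection onto the nonnegative orthant. Writing the update compactly as $q_{i,t}=[(1-\beta_t\gamma_t)q_{i,t-1}+\gamma_t b_{i,t}]_+$ with $b_{i,t}$ as defined above, and using $\mu_i\in\mathbb{R}_+^{m_i}$, I would first apply the nonexpansiveness of $\calP_{\mathbb{R}_+^{m_i}}$ (equivalently Lemma~\ref{online_op:lemma_projection}(b) with $\mathbb{K}=\mathbb{R}_+^{m_i}$, $y=\mu_i$) to obtain
\begin{align*}
\|q_{i,t}-\mu_i\|^2\le\|(1-\beta_t\gamma_t)q_{i,t-1}+\gamma_t b_{i,t}-\mu_i\|^2 .
\end{align*}
I would then expand the right-hand side by grouping it around $q_{i,t-1}-\mu_i$, so as to isolate the discounted term $(1-\beta_t\gamma_t)\|q_{i,t-1}-\mu_i\|^2$ that defines $\Delta_{i,t}(\mu_i)$. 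This expansion produces, besides that discounted term, (i) a cross term $2\gamma_t\langle q_{i,t-1}-\mu_i,\,b_{i,t}\rangle$, (ii) a quadratic remainder $\gamma_t^2\|b_{i,t}\|^2$, and (iii) lower-order contributions in $\beta_t\gamma_t$ arising from the discount factor. Dividing through by $2\gamma_t$ turns the left-hand side into $\Delta_{i,t}(\mu_i)$.

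The three groups then match \eqref{online_op:gvirtualnorm} termwise. For (ii) I would bound $\|b_{i,t}\|$ using Assumption~\ref{online_op:assfunction}: since $\|[g_{i,t-1}(x_{i,t-1})]_+\|\le\|g_{i,t-1}(x_{i,t-1})\|\le F_1$ by \eqref{online_op:ftgtupper-b}, $\|\partial[g_{i,t-1}(x_{i,t-1})]_+\|\le\|\partial g_{i,t-1}(x_{i,t-1})\|\le F_2$ by \eqref{online_op:subgupper-b}, and $\|x_{i,t}-x_{i,t-1}\|\le 2R(\mathbb{X})$ by \eqref{online_op:domainupper}, we get $\|b_{i,t}\|\le F_1+2F_2R(\mathbb{X})\le 2(F_1+F_2R(\mathbb{X}))$, whence $\gamma_t^2\|b_{i,t}\|^2\le 2\varepsilon_1\gamma_t^2$, i.e.\ the $\varepsilon_1\gamma_t$ term. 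For (i) I would split $\langle q_{i,t-1}-\mu_i,b_{i,t}\rangle=\langle q_{i,t-1},b_{i,t}\rangle-\langle\mu_i,b_{i,t}\rangle$; the first piece supplies the desired $q_{i,t-1}^\top b_{i,t}$, while in the second I expand $b_{i,t}=[g_{i,t-1}(x_{i,t-1})]_++(\partial[g_{i,t-1}(x_{i,t-1})]_+)^\top(x_{i,t}-x_{i,t-1})$ to get $-\langle\mu_i,b_{i,t}\rangle=-\mu_i^\top[g_{i,t-1}(x_{i,t-1})]_+-\langle(\partial[g_{i,t-1}(x_{i,t-1})]_+)\mu_i,\,x_{i,t}-x_{i,t-1}\rangle$, and Cauchy--Schwarz with $\|\partial[g_{i,t-1}(x_{i,t-1})]_+\|\le F_2$ bounds the last inner product by $F_2\|\mu_i\|\,\|x_{i,t}-x_{i,t-1}\|$. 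Group (iii) is what yields $\tfrac12\beta_t\|\mu_i\|^2$, since the discount contributes $(1-(1-\beta_t\gamma_t))\|\mu_i\|^2=\beta_t\gamma_t\|\mu_i\|^2$, which becomes $\tfrac12\beta_t\|\mu_i\|^2$ after dividing by $2\gamma_t$.

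The delicate point — and the step I expect to be the main obstacle — is that carrying the discount factor $1-\beta_t\gamma_t$ through the expansion leaves a remainder that is quadratic in the a priori unbounded dual iterate $q_{i,t-1}$, of the schematic form $-\beta_t\gamma_t\big((1-\beta_t\gamma_t)\|q_{i,t-1}\|^2+2\gamma_t\langle b_{i,t},q_{i,t-1}\rangle\big)$, and to reach the clean coefficient-one term $q_{i,t-1}^\top b_{i,t}$ one must discard or dominate it. Here I would use the standing hypothesis $\gamma_t\beta_t\le 1$ so that $1-\beta_t\gamma_t\ge 0$, together with the nonnegativity $q_{i,t-1}\ge\bm{0}_{m_i}$ and $[g_{i,t-1}(x_{i,t-1})]_+\ge\bm{0}_{m_i}$, which forces $\langle[g_{i,t-1}(x_{i,t-1})]_+,q_{i,t-1}\rangle\ge 0$. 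The only indefinite contribution is then the linearization $\langle(\partial[g_{i,t-1}(x_{i,t-1})]_+)^\top(x_{i,t}-x_{i,t-1}),q_{i,t-1}\rangle$, which I would control by completing the square against the nonnegative $(1-\beta_t\gamma_t)\|q_{i,t-1}\|^2$ term, so that the residual is nonpositive (or, failing that, an $\mathcal{O}(\gamma_t^2)$ quantity absorbed into the slack of the non-tight bound $\|b_{i,t}\|\le 2(F_1+F_2R(\mathbb{X}))$). Collecting the surviving terms and dividing by $2\gamma_t$ then gives exactly \eqref{online_op:gvirtualnorm}.
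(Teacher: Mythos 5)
Your skeleton coincides with the paper's for most of the argument: nonexpansiveness of $[\cdot]_+$ applied to \eqref{online_op:al_q}, expansion of the square around $q_{i,t-1}-\mu_i$, the bound $-2\beta_t\gamma_t(q_{i,t-1}-\mu_i)^\top q_{i,t-1}\le\beta_t\gamma_t(\|\mu_i\|^2-\|q_{i,t-1}-\mu_i\|^2)$ for the $\tfrac12\beta_t\|\mu_i\|^2$ term, and Cauchy--Schwarz with \eqref{online_op:subgupper-b} for the $F_2\|\mu_i\|\|x_{i,t}-x_{i,t-1}\|$ term are all exactly the paper's steps. The divergence, and the genuine gap, is in how the terms involving the a priori unbounded dual iterate are tamed. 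The paper never has to discard or dominate a leftover quadratic in $q_{i,t-1}$: it first proves by induction the uniform dual bound $\|\beta_t q_{i,t}\|\le F_1$ in \eqref{online_op:lemma_virtualboundeqy} (using $b_{i,t+1}\le[g_{i,t}(x_{i,t+1})]_+$ from convexity of the clipped constraint, \eqref{dbco:lemma:projection:ab}, \eqref{online_op:ftgtupper-b}, and monotonicity of $\{\beta_t\}$ together with $\gamma_t\beta_t\le1$), and then groups the perturbation as $\gamma_t(b_{i,t}-\beta_t q_{i,t-1})$, so the \emph{entire} quadratic remainder is $\gamma_t^2\|b_{i,t}-\beta_tq_{i,t-1}\|^2\le\gamma_t^2(2F_1+2F_2R(\mathbb{X}))^2=2\varepsilon_1\gamma_t^2$ as in \eqref{online_op:gvirtualnorm_pf2}, yielding exactly $\varepsilon_1\gamma_t$ after division by $2\gamma_t$, with no indefinite cross term $-2\beta_t\gamma_t^2\langle b_{i,t},q_{i,t-1}\rangle$ left over. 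Your proposal omits this induction entirely, and your substitute for it does not close.

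Concretely, your leftover $-\beta_t\gamma_t\bigl((1-\beta_t\gamma_t)\|q_{i,t-1}\|^2+2\gamma_t\langle b_{i,t},q_{i,t-1}\rangle\bigr)$ is correctly identified, and the sign argument does kill the $[g_{i,t-1}(x_{i,t-1})]_+$ part of $b_{i,t}$; but completing the square on the linearization $v:=(\partial[g_{i,t-1}(x_{i,t-1})]_+)^\top(x_{i,t}-x_{i,t-1})$ against $\beta_t\gamma_t(1-\beta_t\gamma_t)\|q_{i,t-1}\|^2$ leaves, after dividing by $2\gamma_t$, a residual of size $\frac{\beta_t\gamma_t}{1-\beta_t\gamma_t}\cdot\frac{\gamma_t\|v\|^2}{2}$. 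This fails twice over. First, the lemma's hypothesis is only $\gamma_t\beta_t\le1$, and the boundary is actually attained: under \eqref{online_op:stepsize1} one has $\beta_1\gamma_1=1$, in which case the quadratic $(1-\beta_t\gamma_t)\|q_{i,t-1}\|^2$ vanishes and $-2\beta_t\gamma_t^2\langle v,q_{i,t-1}\rangle$ is linear in the unbounded $\|q_{i,t-1}\|$ with nothing left to absorb it. Second, even with $\beta_t\gamma_t$ bounded away from $1$, your fallback of absorbing the residual into the slack of $\|b_{i,t}\|\le 2(F_1+F_2R(\mathbb{X}))$ is quantitatively unavailable: the slack between $\frac{\gamma_t}{2}\|b_{i,t}\|^2\le\frac{\gamma_t}{2}(F_1+2F_2R(\mathbb{X}))^2$ and the target $\varepsilon_1\gamma_t=2(F_1+F_2R(\mathbb{X}))^2\gamma_t$ equals $\frac{\gamma_t}{2}(3F_1^2+4F_1F_2R(\mathbb{X}))$, which is arbitrarily small when $F_1\ll F_2R(\mathbb{X})$ (a regime permitted by Assumption~\ref{online_op:assfunction}), while the residual scales like $\gamma_tF_2^2R(\mathbb{X})^2$ since $\|v\|$ can be as large as $2F_2R(\mathbb{X})$. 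The repair is precisely the paper's missing half: establish $\|\beta_tq_{i,t}\|\le F_1$ by induction first (a bound the paper also reuses downstream, e.g.\ in \eqref{online_op:gvirtualnorm_pf2} and its bandit analogue), and then fold $-\beta_tq_{i,t-1}$ into the quadratic term rather than into the cross term.
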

\begin{proof}

We first use mathematical induction to prove
\begin{align}
\|\beta_tq_{i,t}\|&\le F_1.\label{online_op:lemma_virtualboundeqy}
\end{align}

It is straightforward to see that $\|\beta_1q_{i,1}\|\le F_1,~\forall i\in[n]$ since $q_{i,1}={\bm 0}_{m_i},~\forall i\in[n]$. Assume now that it is true at time slot $t$ for all $i\in[n]$, i.e., $\|\beta_tq_{i,t}\|\le F_1$. We show that it remains true at time slot $t+1$.

Noting that $[g_{i,t}]_+$ is convex since $g_{i,t}$ is convex and that $\partial [g_{i,t}(x_{i,t})]_+$ is the subgradient of $[g_{i,t}]_+$ at $x_{i,t}$, we have
\begin{align}
b_{i,t+1}\le[g_{i,t}(x_{i,t+1})]_+.\label{online_op:qg}
\end{align}
Then, we have
\begin{align*}
\|q_{i,t+1}\|&=\|[(1-\beta_{t+1}\gamma_{t+1})q_{i,t}+\gamma_{t+1}b_{i,t+1}]_{+}\|\\
&\le\|(1-\gamma_{t+1}\beta_{t+1})q_{i,t}+\gamma_{t+1}[g_{i,t}(x_{i,t+1})]_+\|\\
&\le(1-\gamma_{t+1}\beta_{t+1})\|q_{i,t}\|
+\gamma_{t+1}\|g_{i,t}(x_{i,t+1})\|\\
&\le(1-\gamma_{t+1}\beta_{t+1})\frac{ F_1}{\beta_{t}}+\gamma_{t+1} F_1\\
&\le(1-\gamma_{t+1}\beta_{t+1})\frac{ F_1}{\beta_{t+1}}+\gamma_{t+1} F_1= \frac{ F_1}{\beta_{t+1}},~\forall i\in[n],
\end{align*}
where the first equality holds due to \eqref{online_op:al_q}; the first inequality holds due to \eqref{dbco:lemma:projection:ab} and (\ref{online_op:qg}); the third inequality holds due to $\|\beta_tq_{i,t}\|\le F_1$ and \eqref{online_op:ftgtupper-b}; and the last inequality holds since the sequence $\{\beta_t\}$ is non-increasing and $\gamma_{t}\beta_{t}\le1$.
Thus, the result follows.

We then prove \eqref{online_op:gvirtualnorm}.

For any $\mu_i\in\mathbb{R}^{m_i}_+$, from that the projection $[\cdot]_+$ is nonexpansive and (\ref{online_op:al_q}), we have
\begin{align}
\|q_{i,t}-\mu_i\|^2
&=\|[(1-\beta_t\gamma_t)q_{i,t-1}+\gamma_tb_{i,t}]_+-[\mu_i]_+\|^2\nonumber\\
&\le\|(1-\beta_t\gamma_t)q_{i,t-1}+\gamma_tb_{i,t}-\mu_i\|^2\nonumber\\
&=\|q_{i,t-1}-\mu_i\|^2+\gamma_t^2\|b_{i,t}-\beta_tq_{i,t-1}\|^2
+2\gamma_tq_{i,t-1}^\top b_{i,t}-2\gamma_t\mu_i^\top [g_{i,t-1}(x_{i,t-1})]_+\nonumber\\
&\quad
-2\beta_t\gamma_t(q_{i,t-1}-\mu_i)^\top q_{i,t-1}
-2\gamma_t\mu_i^\top(\partial [g_{i,t-1}(x_{i,t-1})]_+)^\top(x_{i,t}-x_{i,t-1}).
\label{online_op:gvirtualnorm_pf}
\end{align}

We have
\begin{align}\label{online_op:gvirtualnorm_pf2}
\|b_{i,t}-\beta_tq_{i,t-1}\|&\le \|b_{i,t}\|+\|\beta_tq_{i,t-1}\|\nonumber\\
&\le\|[g_{i,t-1}(x_{i,t-1})]_+ +(\partial [g_{i,t-1}(x_{i,t-1})]_+)^\top(x_{i,t}-x_{i,t-1})\|+\beta_t\frac{F_1}{\beta_{t-1}}\nonumber\\
&\le\|[g_{i,t-1}(x_{i,t-1})]_+\|+\|\partial [g_{i,t-1}(x_{i,t-1})]_+\|\|x_{i,t}-x_{i,t-1}\|+F_1\nonumber\\
&\le\|g_{i,t-1}(x_{i,t-1})\|
+\|\partial g_{i,t-1}(x_{i,t-1})\|\|x_{i,t}-x_{i,t-1}\|+F_1\nonumber\\
&\le 2F_1+2F_2R(\mathbb{X}),
\end{align}
where the second inequality holds due to \eqref{online_op:lemma_virtualboundeqy}; the third inequality holds since $\{\beta_t\}$ is a non-increasing sequence; and the last inequality holds due to \eqref{online_op:domainupper}, (\ref{online_op:ftgtupper-b}), and (\ref{online_op:subgupper-b}).

We have
\begin{align}
-2\beta_t\gamma_t(q_{i,t-1}-\mu_i)^\top q_{i,t-1}
\le\beta_t\gamma_t(\|\mu_i\|^2-\|q_{i,t-1}-\mu_i\|^2).\label{online_op:qmu3}
\end{align}

From (\ref{online_op:subgupper-b}), we have
\begin{align}
&-2\gamma_t\mu_i^\top(\partial [g_{i,t-1}(x_{i,t-1})]_+)^\top(x_{i,t}-x_{i,t-1})\le2\gamma_tF_2\|\mu_i\|\|x_{i,t}-x_{i,t-1}\|.\label{online_op:qmu2}
\end{align}

Finally, from \eqref{online_op:gvirtualnorm_pf}--\eqref{online_op:qmu2}, we have \eqref{online_op:gvirtualnorm}.
\end{proof}

Next,  we provide  network regret bound at one slot.
\begin{lemma}\label{online_op:lemma_regretdelta}
Suppose Assumptions~\ref{online_op:assfunction}--\ref{online_op:assgraph} hold. For all $i\in[n]$, let $\{x_{i,t}\}$ be the sequences generated by Algorithm \ref{online_op:algorithm} and $\{y_{t}\}$ be an arbitrary sequence in $\mathbb{X}$, then
\begin{align}\label{online_op:lemma_regretdeltaequ}
\frac{1}{n}\sum_{i=1}^nf_{t}(x_{i,t})
-f_{t}(y_t)
&\le \frac{1}{n}\sum_{i=1}^nq_{i,t}^\top ([g_{i,t}(y_{t})]_+-b_{i,t+1})
-\frac{1}{n}\sum_{i=1}^n\frac{1}{2\alpha_{t+1}}\|\epsilon^x_{i,t}\|^2\nonumber\\
&\quad+\frac{1}{n}\sum_{i=1}^nF_2(2\|x_{i,t}-\bar{x}_{i,t}\|+\|x_{i,t}-x_{i,t+1}\|)\nonumber\\
&\quad+\frac{1}{ 2n\alpha_{t+1}}\sum_{i=1}^n(\|y_t-z_{i,t+1}\|^2-\|y_{t+1}-z_{i,t+2}\|^2\nonumber\\
&\quad+\|y_{t+1}-x_{i,t+1}\|^2-\|y_t-x_{i,t+1}\|^2).
\end{align}
\end{lemma}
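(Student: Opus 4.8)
The plan is to derive the bound agent by agent from the primal update and then aggregate over the network. First I would apply the projection inequality \eqref{dbco:lemma:projection:xy} of Lemma~\ref{online_op:lemma_projection} to the update \eqref{online_op:al_x}, written as $x_{i,t+1}=\calP_\mathbb{X}(z_{i,t+1}-\alpha_{t+1}\omega_{i,t+1})$, taking $c=z_{i,t+1}$, $a=\alpha_{t+1}\omega_{i,t+1}$, and comparator $y_t\in\mathbb{X}$. Using $x_{i,t+1}-z_{i,t+1}=\epsilon^x_{i,t}$, this gives
\begin{align*}
\langle x_{i,t+1}-y_t,\omega_{i,t+1}\rangle
\le\frac{1}{2\alpha_{t+1}}\big(\|y_t-z_{i,t+1}\|^2-\|y_t-x_{i,t+1}\|^2-\|\epsilon^x_{i,t}\|^2\big).
\end{align*}

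Next I would \emph{lower}-bound the left-hand side by expanding $\omega_{i,t+1}=\partial f_{i,t}(x_{i,t})+\partial[g_{i,t}(x_{i,t})]_+q_{i,t}$. For the loss part, convexity of $f_{i,t}$ together with the splitting $\langle x_{i,t}-y_t,\cdot\rangle=\langle x_{i,t+1}-y_t,\cdot\rangle+\langle x_{i,t}-x_{i,t+1},\cdot\rangle$ and \eqref{online_op:subgupper-a} yields $\langle x_{i,t+1}-y_t,\partial f_{i,t}(x_{i,t})\rangle\ge f_{i,t}(x_{i,t})-f_{i,t}(y_t)-F_2\|x_{i,t}-x_{i,t+1}\|$. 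For the constraint part, the same splitting isolates $\langle x_{i,t+1}-x_{i,t},\partial[g_{i,t}(x_{i,t})]_+q_{i,t}\rangle=q_{i,t}^\top b_{i,t+1}-q_{i,t}^\top[g_{i,t}(x_{i,t})]_+$ by the definition of $b_{i,t+1}$, while componentwise convexity of $[g_{i,t}]_+$ and $q_{i,t}\ge{\bm 0}_{m_i}$ give $\langle x_{i,t}-y_t,\partial[g_{i,t}(x_{i,t})]_+q_{i,t}\rangle\ge q_{i,t}^\top([g_{i,t}(x_{i,t})]_+-[g_{i,t}(y_t)]_+)$. The crucial point is that the two $q_{i,t}^\top[g_{i,t}(x_{i,t})]_+$ terms cancel, leaving exactly $q_{i,t}^\top b_{i,t+1}-q_{i,t}^\top[g_{i,t}(y_t)]_+$. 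Chaining this lower bound against the projection upper bound and rearranging produces the per-agent version of \eqref{online_op:lemma_regretdeltaequ}.

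Then I would average over $i$. The comparator term is immediate since $\frac1n\sum_i f_{i,t}(y_t)=f_t(y_t)$, but the diagonal losses $\frac1n\sum_i f_{i,t}(x_{i,t})$ must be converted into $\frac1n\sum_i f_t(x_{i,t})$. Writing $f_t=\frac1n\sum_j f_{j,t}$, using $F_2$-Lipschitz continuity of each $f_{j,t}$ from \eqref{online_op:subgupper-a} and the triangle inequality $\|x_{i,t}-x_{j,t}\|\le\|x_{i,t}-\bar x_t\|+\|x_{j,t}-\bar x_t\|$, the resulting double sum collapses to $\frac1n\sum_i f_t(x_{i,t})\le\frac1n\sum_i f_{i,t}(x_{i,t})+\frac{2F_2}{n}\sum_i\|x_{i,t}-\bar x_t\|$; this is the origin of the factor $2$ multiplying the consensus error in \eqref{online_op:lemma_regretdeltaequ}.

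Finally I would massage the quadratic terms into telescoping form. The averaged bound so far carries $\frac{1}{2n\alpha_{t+1}}\sum_i(\|y_t-z_{i,t+1}\|^2-\|y_t-x_{i,t+1}\|^2)$, whereas \eqref{online_op:lemma_regretdeltaequ} additionally contains $-\|y_{t+1}-z_{i,t+2}\|^2+\|y_{t+1}-x_{i,t+1}\|^2$. I would add this quantity, which is nonnegative after summation: applying convexity of $\|y_{t+1}-\cdot\|^2$ to $z_{i,t+2}=\sum_j[W_{t+1}]_{ij}x_{j,t+1}$ and then $\sum_i[W_{t+1}]_{ij}=1$ (double stochasticity) gives $\frac1n\sum_i\|y_{t+1}-z_{i,t+2}\|^2\le\frac1n\sum_i\|y_{t+1}-x_{i,t+1}\|^2$. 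I expect the main obstacle to be the second step: keeping the direction of every subgradient inequality consistent, since a \emph{lower} bound on $\langle x_{i,t+1}-y_t,\omega_{i,t+1}\rangle$ is required, and verifying the exact cancellation of the $q_{i,t}^\top[g_{i,t}(x_{i,t})]_+$ terms against the definition of $b_{i,t+1}$, because everything downstream depends on recovering precisely the combination $q_{i,t}^\top([g_{i,t}(y_t)]_+-b_{i,t+1})$.
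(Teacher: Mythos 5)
Your proposal is correct and follows essentially the same route as the paper's own proof: the same projection inequality \eqref{dbco:lemma:projection:xy} applied to the update \eqref{online_op:al_x}, the same exact cancellation of $q_{i,t}^\top[g_{i,t}(x_{i,t})]_+$ against the definition of $b_{i,t+1}$ to recover $q_{i,t}^\top([g_{i,t}(y_t)]_+-b_{i,t+1})$, the same Lipschitz conversion of $\frac{1}{n}\sum_i f_t(x_{i,t})$ into local losses producing the factor $2F_2\|x_{i,t}-\bar{x}_t\|$, and the same convexity-plus-double-stochasticity argument for the telescoping quadratic terms (which the paper folds into its bound \eqref{online_op:omgea2} on $\langle\omega_{i,t+1},x_{i,t+1}-y_t\rangle$ rather than appending at the end). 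The only difference is narrative order—you lower-bound the left-hand side of the projection inequality whereas the paper starts from convexity of $f_{i,t}$ and substitutes $\omega_{i,t+1}$—which is immaterial.
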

\begin{proof}

From the third item in Assumption~\ref{online_op:assfunction} and Lemma~2.6 in \cite{shalev2012online}, it follows that for all $i\in[n],~t\in\mathbb{N}_+,~x,y\in \mathbb{X}$,
\begin{subequations}
\begin{align}
&\left|f_{i,t}(x)-f_{i,t}(y)\right|\le F_2\|x-y\|,\label{online_op:assfunction:functionLipf}\\
      &\|g_{i,t}(x)-g_{i,t}(y)\|\le F_2\|x-y\|.\label{online_op:assfunction:functionLipg}
\end{align}
\end{subequations}

We have
\begin{align}\label{online_op:lxit}
\frac{1}{n}\sum_{i=1}^{n}f_t(x_{i,t})
&=\frac{1}{n}\sum_{i=1}^{n}\Big(\frac{1}{n}\sum_{j=1}^{n}f_{j,t}(x_{i,t})\Big)\nonumber\\
&=\frac{1}{n}\sum_{i=1}^{n}\Big(\frac{1}{n}\sum_{j=1}^{n}f_{j,t}(x_{j,t})
+\frac{1}{n}\sum_{j=1}^{n}
(f_{j,t}(x_{i,t})-f_{j,t}(x_{j,t}))\Big)\nonumber\\
&=\frac{1}{n}\sum_{i=1}^{n}f_{i,t}(x_{i,t})
+\frac{1}{n^2}\sum_{i=1}^{n}\sum_{j=1}^{n}
(f_{j,t}(x_{i,t})-f_{j,t}(x_{j,t}))\nonumber\\
&\le\frac{1}{n}\sum_{i=1}^{n}f_{i,t}(x_{i,t})
+\frac{1}{n^2}\sum_{i=1}^{n}\sum_{j=1}^{n}
G\|x_{i,t}-x_{j,t}\|\nonumber\\
&\le\frac{1}{n}\sum_{i=1}^{n}f_{i,t}(x_{i,t})+\frac{2G}{n}\sum_{i=1}^{n}
\|x_{i,t}-\bar{x}_{t}\|,
\end{align}
where the first inequality holds due to \eqref{online_op:assfunction:functionLipf}.

Noting that $f_{i,t}$ is convex, from \eqref{online_op:subgupper-a}, we have
\begin{align}\label{online_op:fxy}
f_{i,t}(x_{i,t})-f_{i,t}(y_t)&\le\langle\partial f_{i,t}(x_{i,t}),x_{i,t}-y_t\rangle\nonumber\\
&=\langle\partial f_{i,t}(x_{i,t}),x_{i,t}-x_{i,t+1}\rangle
+\langle\partial f_{i,t}(x_{i,t}),x_{i,t+1}-y_t\rangle\nonumber\\
&\le F_2\|x_{i,t}-x_{i,t+1}\|+\langle\partial f_{i,t}(x_{i,t}),x_{i,t+1}-y_t\rangle.
\end{align}
For the second term of \eqref{online_op:fxy}, from \eqref{online_op:al_bigomega}, we have
\begin{align}
\langle\partial f_{i,t}(x_{i,t}),x_{i,t+1}-y_t\rangle
&=\langle\omega_{i,t+1},x_{i,t+1}-y_t\rangle+\langle\partial [g_{i,t}(x_{i,t})]_+ q_{i,t},y_t-x_{i,t+1}\rangle
\nonumber\\
&=\langle\omega_{i,t+1},x_{i,t+1}-y_t\rangle+\langle\partial [g_{i,t}(x_{i,t})]_+ q_{i,t},y_t-x_{i,t}\rangle\nonumber\\
&\quad+\langle\partial [g_{i,t}(x_{i,t})]_+ q_{i,t},x_{i,t}-x_{i,t+1}\rangle.\label{online_op:fxy1}
\end{align}
Noting that $\partial [g_{i,t}(x_{i,t})]_+$ is the subgradient of the convex function $[g_{i,t}]_+$ at $x_{i,t}$, from $q_{i,t}\ge{\bm 0}_{m_i},~\forall t\in\mathbb{N}_+,~\forall i\in[n]$, we have
\begin{align}
&\langle\partial [g_{i,t}(x_{i,t})]_+ q_{i,t},y_t-x_{i,t}\rangle\le q_{i,t}^\top [g_{i,t}(y_{t})]_+ -q_{i,t}^\top [g_{i,t}(x_{i,t})]_+
.\label{online_op:gyxdelta}
\end{align}
Applying \eqref{dbco:lemma:projection:xy} to the update (\ref{online_op:al_x}), we get
\begin{align}
\langle\omega_{i,t+1},x_{i,t+1}-y_t\rangle
&\le\frac{1}{2\alpha_{t+1}}(\|y_t-z_{i,t+1}\|^2-\|y_t-x_{i,t+1}\|^2
-\|\epsilon^x_{i,t}\|^2)\nonumber\\
&=\frac{1}{2\alpha_{t+1}}(\|y_t-z_{i,t+1}\|^2-\|y_{t+1}-z_{i,t+2}\|^2\nonumber\\
&\quad+\|y_{t+1}-z_{i,t+2}\|^2-\|y_t-x_{i,t+1}\|^2-\|\epsilon^x_{i,t}\|^2)\nonumber\\
&=\frac{1}{2\alpha_{t+1}}\Big(\|y_t-z_{i,t+1}\|^2-\|y_{t+1}-z_{i,t+2}\|^2-\|\epsilon^x_{i,t}\|^2\nonumber\\
&\quad+\Big\|y_{t+1}-\sum_{j=1}^n[W_{t+1}]_{ij}x_{j,t+1}\Big\|^2-\|y_t-x_{i,t+1}\|^2
\Big)\nonumber\\
&\le\frac{1}{2\alpha_{t+1}}\Big(\|y_t-z_{i,t+1}\|^2-\|y_{t+1}-z_{i,t+2}\|^2-\|\epsilon^x_{i,t}\|^2\nonumber\\
&\quad+\sum_{j=1}^n[W_{t+1}]_{ij}\|y_{t+1}-x_{j,t+1}\|^2-\|y_t-x_{i,t+1}\|^2
\Big),\label{online_op:omgea2}
\end{align}
where the last inequality holds since $W_{t+1}$ is doubly stochastic and $\|\cdot\|^2$ is convex.

Combining (\ref{online_op:fxy})--(\ref{online_op:omgea2}), summing over $i\in[n]$, and dividing by $n$, and using $\sum_{i=1}^n[W_t]_{ij}=1,~\forall t\in\mathbb{N}_+$ yields
 (\ref{online_op:lemma_regretdeltaequ}).
\end{proof}

Finally,  we show network regret and cumulative constraint violation bounds.
\begin{lemma}\label{online_op:theoremreg}
Suppose Assumptions~\ref{online_op:assfunction}--\ref{online_op:assgraph} hold and $\gamma_{t}\beta_{t}\le1,~t\in\mathbb{N}_+$. For all $i\in[n]$, let $\{x_{i,t}\}$ be the sequences generated by Algorithm~\ref{online_op:algorithm}. Then, for any comparator sequence $y_{[T]}\in\calX_{T}$,
\begin{align}
\NetReg(\{x_{i,t}\},y_{[T]})
&\le 4F_2\varepsilon_2
+\sum_{t=1}^T(\varepsilon_1\gamma_{t}
+10\varepsilon_5\alpha_{t})+\frac{2R(\mathbb{X})^2}{\alpha_{T+1}}
+\frac{2R(\mathbb{X})}{\alpha_{T}}P_T\nonumber\\
&\quad
-\frac{1}{2n}\sum_{t=1}^T\sum_{i=1}^n\Big(\frac{1}{\gamma_{t}}
-\frac{1}{\gamma_{t+1}}+\beta_{t+1}\Big)\|q_{i,t}\|^2,\label{online_op:theoremregequ}\\
\frac{1}{n}\sum_{i=1}^n\Big\|\sum_{t=1}^T[g_{t}(x_{i,t})]_+\Big\|^2
&\le 4n\varepsilon_2F_1F_2T+2\Big(\frac{1}{\gamma_1}
+\sum_{t=1}^T(\beta_{t}+\varepsilon_6\alpha_{t})\Big)
\Big(nF_1T\nonumber\\
&\quad
+\sum_{t=1}^Tn(\varepsilon_1\gamma_{t}
+20\varepsilon_5\alpha_{t})+\frac{2nR(\mathbb{X})^2}{\alpha_{T+1}}\nonumber\\
&\quad
-\frac{1}{2}\sum_{t=1}^T\sum_{i=1}^n\Big(\frac{1}{\gamma_{t}}
-\frac{1}{\gamma_{t+1}}+\beta_{t+1}\Big)\|q_{i,t}-\mu_{ij}^0\|^2\Big).
\label{online_op:theoremconsequ}
\end{align}
\end{lemma}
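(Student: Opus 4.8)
The plan is to obtain both inequalities from one common chain of estimates, specialized in two different ways. The backbone is to sum the one-round bound \eqref{online_op:lemma_regretdeltaequ} of Lemma~\ref{online_op:lemma_regretdelta} over $t=1,\dots,T$, to replace the quantities $q_{i,t}^\top b_{i,t+1}$ by the dual drift estimate \eqref{online_op:gvirtualnorm} of Lemma~\ref{online_op:lemma_virtualbound}, and then to telescope the resulting quadratics in $q_{i,t}$ and in the primal iterates while absorbing the network disagreement through Lemma~\ref{online_op:lemma_neterror}. The regret estimate \eqref{online_op:theoremregequ} and the violation estimate \eqref{online_op:theoremconsequ} correspond to two choices of the comparator and of the dual multiplier $\mu_i$.

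For \eqref{online_op:theoremregequ} I would pick the feasible comparator $y_{[T]}\in\calX_{T}$, so that $[g_{i,t}(y_t)]_+=\bm{0}$ and the dual contribution in \eqref{online_op:lemma_regretdeltaequ} reduces to $-\tfrac1n\sum_i q_{i,t}^\top b_{i,t+1}$. Invoking Lemma~\ref{online_op:lemma_virtualbound} with $\mu_i=\bm{0}$ and the time index shifted by one gives $q_{i,t}^\top b_{i,t+1}\ge\Delta_{i,t+1}(\bm{0})-\varepsilon_1\gamma_{t+1}$; summing makes the $\Delta_{i,t+1}(\bm{0})$ terms telescope with the $(1-\beta\gamma)$ weights, which after collecting the coefficient of each $\|q_{i,t}\|^2$ produces exactly the negative term $-\tfrac{1}{2n}\sum_t\sum_i(\tfrac{1}{\gamma_t}-\tfrac{1}{\gamma_{t+1}}+\beta_{t+1})\|q_{i,t}\|^2$ and the $\varepsilon_1\gamma_t$ contributions (using $q_{i,1}=\bm 0$ to discard the boundary term). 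The primal quadratics $\|y_t-z_{i,t+1}\|^2-\|y_{t+1}-z_{i,t+2}\|^2$ are summed by parts: since $\{\alpha_t\}$ is non-increasing and every iterate lies in $R(\mathbb{X})\mathbb{B}^p$, this yields the $2R(\mathbb{X})^2/\alpha_{T+1}$ term, while the mismatch $\|y_{t+1}-x_{i,t+1}\|^2-\|y_t-x_{i,t+1}\|^2\le 4R(\mathbb{X})\|y_{t+1}-y_t\|$ and $\alpha_{t+1}\ge\alpha_T$ give the path-length term $\tfrac{2R(\mathbb{X})}{\alpha_T}P_T$.

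For \eqref{online_op:theoremconsequ} I would instead run the same summation with the admissible comparator $y_t=x_{j,t}$ (legitimate since $x_{j,t}\in\mathbb{X}$) and keep $\mu_i$ general in Lemma~\ref{online_op:lemma_virtualbound}. Then $[g_{i,t}(y_t)]_+=[g_{i,t}(x_{j,t})]_+$ survives, the loss gap $\tfrac1n\sum_i\sum_t(f_t(x_{i,t})-f_t(x_{j,t}))$ is bounded crudely by $F_1T$ via \eqref{online_op:ftgtupper-a}, and the telescoping now produces $\|q_{i,t}-\mu_i\|^2$ in place of $\|q_{i,t}\|^2$. Collecting terms leaves a concave quadratic in $\mu_i$ whose linear part is $\mu_i^\top\sum_t[g_{i,t}(x_{j,t})]_+$; substituting $\mu_i=\mu_{ij}^0$, its maximizer over $\mathbb{R}_+^{m_i}$ (the nonnegativity constraint being inactive because $[\cdot]_+\ge\bm 0$), replaces the linear term by the squared accumulated violation $\|\sum_t[g_{i,t}(x_{j,t})]_+\|^2$, delivering the prefactor $2(\tfrac{1}{\gamma_1}+\sum_t(\beta_t+\varepsilon_6\alpha_t))$ in front of the bracket and retaining the negative quadratics $-\tfrac12\sum_t\sum_i(\tfrac{1}{\gamma_t}-\tfrac{1}{\gamma_{t+1}}+\beta_{t+1})\|q_{i,t}-\mu_{ij}^0\|^2$. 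Summing over the block structure $\|[g_t(x_{i,t})]_+\|^2=\sum_k\|[g_{k,t}(x_{i,t})]_+\|^2$ of the stacked constraint then assembles \eqref{online_op:theoremconsequ}.

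The hard part will be controlling the disagreement error uniformly in the dual magnitude. The one-step displacement satisfies $\|\epsilon^x_{i,t}\|\le\alpha_{t+1}\|\omega_{i,t+1}\|$ by nonexpansiveness of $\calP_\mathbb{X}$ together with $z_{i,t+1}\in\mathbb{X}$, and since $\omega_{i,t+1}=\partial f_{i,t}(x_{i,t})+\partial[g_{i,t}(x_{i,t})]_+q_{i,t}$ this displacement carries a factor $\|q_{i,t}\|$; the same factor reappears when Lemma~\ref{online_op:lemma_neterror} is substituted for $\|x_{i,t}-\bar{x}_t\|$ into the $F_2$-Lipschitz terms of \eqref{online_op:lemma_regretdeltaequ}. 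These $\|q_{i,t}\|$-dependent contributions must be neutralized by Young's inequality against the negative quadratics $-\tfrac{1}{2\alpha_{t+1}}\|\epsilon^x_{i,t}\|^2$ and $-\tfrac12(\tfrac{1}{\gamma_t}-\tfrac{1}{\gamma_{t+1}}+\beta_{t+1})\|q_{i,t}\|^2$, and the double sum $\tau\sum_s\lambda^{t-s-2}\sum_j\|\epsilon^x_{j,s}\|$ from \eqref{online_op:xitxbar} must be reorganized by swapping the order of summation so that each displacement is charged only a geometric total $\tfrac{1}{1-\lambda}$. Packaging the resulting network constants into $\varepsilon_2,\dots,\varepsilon_6$ and checking that the leftover coefficients of $\alpha_t$ and $\gamma_t$ match those displayed in \eqref{online_op:theoremregequ}--\eqref{online_op:theoremconsequ} is the principal bookkeeping obstacle.
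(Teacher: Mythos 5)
Your derivation of the regret bound \eqref{online_op:theoremregequ} is correct and is essentially the paper's own argument: feasible comparator kills $q_{i,t}^\top[g_{i,t}(y_t)]_+$, Lemma~\ref{online_op:lemma_virtualbound} with $\mu_i=\bm{0}_{m_i}$ and a time shift converts $-q_{i,t}^\top b_{i,t+1}$ into the telescoping $-\Delta_{i,t+1}(\bm{0}_{m_i})$, summation by parts handles the primal quadratics, and Young's inequality with a geometric reorganization of the $\tau\lambda^{t-s-2}$ double sum absorbs the disagreement into $\varepsilon_2,\dots,\varepsilon_5$ against the $-\frac{1}{2\alpha_{t+1}}\|\epsilon^x_{i,t}\|^2$ terms (the paper's choice is $a=10$, matching your description).

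The violation bound \eqref{online_op:theoremconsequ}, however, has a genuine gap: your comparator choice $y_t=x_{j,t}$ does not produce the quantity you need. In \eqref{online_op:lemma_regretdeltaequ} the surviving term is $q_{i,t}^\top[g_{i,t}(x_{j,t})]_+$, weighted by the \emph{dual iterate} $q_{i,t}$, not by the free multiplier $\mu_i$; the $\mu_i$-linear term that enters the concave quadratic comes exclusively from Lemma~\ref{online_op:lemma_virtualbound} and sits at the agent's \emph{own} iterate, i.e.\ it is $\mu_i^\top[g_{i,t}(x_{i,t})]_+$. So after your summation you are left with an uncompensated nonnegative term $\frac{1}{n}\sum_i q_{i,t}^\top[g_{i,t}(x_{j,t})]_+$ on the right-hand side, and maximizing the concave quadratic in $\mu_i$ would only yield $\big\|\sum_t[g_{i,t}(x_{i,t})]_+\big\|^2$ — the diagonal blocks — whereas \eqref{online_op:theoremconsequ} requires the full stacked violation $\big\|\sum_t[g_{t}(x_{j,t})]_+\big\|^2$, i.e.\ all cross terms $i\neq j$. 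The spurious $q$-weighted term cannot be absorbed either: bounding it crudely via $\|q_{i,t}\|\le F_1/\beta_t$ costs $\mathcal{O}(\sum_t t^{\kappa})=\mathcal{O}(T^{1+\kappa})$ inside the bracket, which after multiplication by the prefactor $\mathcal{O}(T^{1-\kappa})$ destroys the $\mathcal{O}(T^{2-\kappa})$ target, and a Young split against the negative quadratic $-\frac{1}{2}(\frac{1}{\gamma_t}-\frac{1}{\gamma_{t+1}}+\beta_{t+1})\|q_{i,t}-\mu_i\|^2$ consumes that quadratic entirely while still leaving an $\mathcal{O}(T^{1+\kappa})$ remainder.

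The paper's mechanism is different in exactly this step: it keeps a \emph{static feasible} comparator $y$ (so the $q$-weighted term vanishes and no path-length term for the iterates arises — note that with $y_t=x_{j,t}$ you would additionally face $\frac{2R(\mathbb{X})}{\alpha_T}\sum_t\|x_{j,t+1}-x_{j,t}\|$, which is not controllable at the right order), and instead transfers the $\mu_i$-linear term across agents by Lipschitz continuity, as in \eqref{online_op:mug}: $\mu_i^\top[g_{i,t}(x_{i,t})]_+\ge\mu_i^\top[g_{i,t}(x_{j,t})]_+-F_2\|\mu_i\|(\|x_{i,t}-\bar{x}_t\|+\|x_{j,t}-\bar{x}_t\|)$, paying only disagreement errors that Young's inequality (with $a=20$, producing the $\varepsilon_6\alpha_t\|\mu_i\|^2$ contribution in the denominator of $\mu_{ij}^0$) can absorb. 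With that repair — static $y$, lower-bounding the regret by $-F_1T$ via \eqref{online_op:ff}, and summing over $j\in[n]$ before the $h_{ij}$ maximization — the rest of your outline (the maximizer $\mu_{ij}^0$, the block identity for the stacked constraint) goes through as in the paper.
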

\begin{proof}
\noindent {\bf (i)} We first provide a loose bound for network regret.

From (\ref{online_op:domainupper}), we have
\begin{align}\label{online_op:dxy}
\|y_{t+1}-x_{i,t+1}\|^2-\|y_t-x_{i,t+1}\|^2
&\le\|y_{t+1}-y_t\|\|y_{t+1}-x_{i,t+1}+y_t-x_{i,t+1}\|\nonumber\\
&\le4R(\mathbb{X})\|y_{t+1}-y_t\|.
\end{align}

From \eqref{online_op:gvirtualnorm}, (\ref{online_op:lemma_regretdeltaequ}), and \eqref{online_op:dxy},  and noting that $g_{i,t}(y_t)\le{\bm 0}_{m_i},~\forall i\in[n]$ when $y_{[T]}\in\calX_{T}$, we have
\begin{align}\label{online_op:theoremregequ1}
&\frac{1}{ n}\sum_{i=1}^n\Big(\Delta_{i,t+1}(\mu_i)+\mu_i^\top[g_{i,t}(x_{i,t})]_+
-\frac{1}{2}\beta_{t+1}\|\mu_i\|^2\Big)+\frac{1}{n}\sum_{i=1}^nf_{t}(x_{i,t})
-f_{t}(y_t)\nonumber\\
&\le \varepsilon_1\gamma_{t+1}+\frac{1}{n}\sum_{i=1}^n
\tilde{\Delta}_{i,t+1}(\mu_i)+\frac{2R(\mathbb{X})}{\alpha_{t+1}}\|y_{t+1}-y_t\|\nonumber\\
&\quad+\frac{1}{ 2n\alpha_{t+1}}\sum_{i=1}^n(\|y_t-z_{i,t+1}\|^2-\|y_{t+1}-z_{i,t+2}\|^2),
\end{align}
where
\begin{align*}
\tilde{\Delta}_{i,t+1}(\mu_i)&=F_2(\|\mu_i\|+1)\|x_{i,t}-x_{i,t+1}\|
+2F_2\|x_{i,t}-\bar{x}_{i,t}\|-\frac{1}{2\alpha_{t+1}}\|\epsilon^x_{i,t}\|^2.
\end{align*}

From $\{\alpha_t\}$ is non-increasing and (\ref{online_op:domainupper}), we have
\begin{align}
&\sum_{t=1}^T\frac{1}{\alpha_{t+1}}(\|y_t-z_{i,t+1}\|^2-\|y_{t+1}-z_{i,t+2}\|^2)\nonumber\\
&=\sum_{t=1}^T\Big(\frac{1}{\alpha_{t}}\|y_t-z_{i,t+1}\|^2-\frac{1}{\alpha_{t+1}}
\|y_{t+1}-z_{i,t+2}\|^2
+\Big(\frac{1}{\alpha_{t+1}}-\frac{1}{\alpha_{t}}\Big)\|y_t-z_{i,t+1}\|^2\Big)\nonumber\\
&\le\frac{1}{\alpha_{1}}\|y_1-z_{i,2}\|^2-\frac{1}{\alpha_{T+1}}\|y_{T+1}-z_{i,T+2}\|^2
+\sum_{t=1}^T\Big(\frac{1}{\alpha_{t+1}}-\frac{1}{\alpha_{t}}\Big)4R(\mathbb{X})^2
\le\frac{4R(\mathbb{X})^2}{\alpha_{T+1}}.\label{online_op:dyz}
\end{align}

Summing (\ref{online_op:theoremregequ1}) over $t\in[T]$, using \eqref{online_op:dyz}, choosing $\mu_i=\bm{0}_{m_i}$, and setting $y_{T+1}=y_{T}$ gives
\begin{align}\label{online_op:theoremregequ2}
&\NetReg(\{x_{i,t}\},y_{[T]})+\frac{1}{n}\sum_{t=1}^T\sum_{i=1}^n
\Delta_{i,t+1}(\bm{0}_{m_i})\nonumber\\
&\le \varepsilon_1\sum_{t=1}^T\gamma_{t+1}
+\frac{1}{n}\sum_{t=1}^{T}\sum_{i=1}^n\tilde{\Delta}_{i,t+1}(\bm{0}_{m_i})
+\frac{2R(\mathbb{X})^2}{\alpha_{T+1}}
+\frac{2R(\mathbb{X})}{\alpha_{T}}P_T.
\end{align}

To get \eqref{online_op:theoremregequ}, we then establish a lower bound for $\sum_{t=1}^T\sum_{i=1}^n
\Delta_{i,t+1}(\bm{0}_{m_i})$ and an upper bound for $\sum_{t=1}^{T}\sum_{i=1}^n\tilde{\Delta}_{i,t+1}(\bm{0}_{m_i})$.

\noindent {\bf (i-1)} Establish a lower bound for $\sum_{t=1}^T\sum_{i=1}^n
\Delta_{i,t+1}(\bm{0}_{m_i})$.

For any $T\in\mathbb{N}_+$, we have
\begin{align}\label{online_op:delta}
\sum_{t=1}^T\Delta_{i,t+1}(\mu_i)
&=\frac{1}{2}\sum_{t=1}^T\Big(\frac{\|q_{i,t+1}-\mu_i\|^2}{\gamma_{t+1}}
-\frac{\|q_{i,t}-\mu_i\|^2}{\gamma_{t}}
+\Big(\frac{1}{\gamma_{t}}-\frac{1}{\gamma_{t+1}}+\beta_{t+1}\Big)
\|q_{i,t}-\mu_i\|^2\Big)\nonumber\\
&=\frac{\|q_{i,T+1}-\mu_i\|^2}{2\gamma_{T+1}}-\frac{\|\mu_i\|^2}{2\gamma_1}
+\frac{1}{2}\sum_{t=1}^T\Big(\frac{1}{\gamma_{t}}-\frac{1}{\gamma_{t+1}}+\beta_{t+1}\Big)
\|q_{i,t}-\mu_i\|^2.
\end{align}

Substituting $\mu_i=\bm{0}_{m_i}$ into \eqref{online_op:delta} yields
\begin{align}\label{online_op:delta_zero}
\sum_{t=1}^T\Delta_{i,t+1}(\bm{0}_{m_i})
\ge\frac{1}{2}\sum_{t=1}^T\Big(\frac{1}{\gamma_{t}}-\frac{1}{\gamma_{t+1}}+\beta_{t+1}\Big)
\|q_{i,t}\|^2.
\end{align}

\noindent {\bf (i-2)} Establish an upper bound for $\sum_{t=1}^{T}\sum_{i=1}^n\tilde{\Delta}_{i,t+1}(\bm{0}_{m_i})$.

We have
\begin{align}
\sum_{t=1}^{T}\sum_{s=1}^{t-2}\lambda^{t-s-2}\sum_{j=1}^n\|\epsilon^x_{j,s}\|
&=\sum_{t=1}^{T-2}\sum_{j=1}^n\|\epsilon^x_{j,t}\|\sum_{s=0}^{T-t-2}\lambda^s
\le\frac{1}{1-\lambda}\sum_{t=1}^{T-2}\sum_{j=1}^n\|\epsilon^x_{j,t}\|.
\label{online_op:xitxbarT1}
\end{align}

From \eqref{online_op:xitxbar} and \eqref{online_op:xitxbarT1}, for any $\mu_i\in\mathbb{R}^{m_i}$, and $a>0$, we have
\begin{align}
\sum_{t=1}^{T}\sum_{i=1}^n\|\mu_i\|\|x_{i,t}-\bar{x}_{t}\|
&\le \varepsilon_2\sum_{i=1}^n\|\mu_i\|
+\frac{1}{n}\sum_{t=2}^{T}\sum_{i=1}^n\sum_{j=1}^n\|\epsilon^x_{j,t-1}\|\|\mu_i\|
\nonumber\\
&\quad+\sum_{t=2}^{T}\sum_{i=1}^n\|\epsilon^x_{i,t-1}\|\|\mu_i\|
+\frac{\tau}{1-\lambda}\sum_{t=1}^{T-2}\sum_{i=1}^n
\sum_{j=1}^n\|\epsilon^x_{j,t}\|\|\mu_i\|\nonumber\\
&\le \varepsilon_2\sum_{i=1}^n\|\mu_i\|
+\frac{1}{n}\sum_{t=2}^{T}\sum_{i=1}^n\sum_{j=1}^n\|\epsilon^x_{i,t-1}\|\|\mu_j\|\nonumber\\
&\quad+\sum_{t=2}^{T}\sum_{i=1}^n\|\epsilon^x_{i,t-1}\|\|\mu_i\|
+\frac{\tau}{1-\lambda}\sum_{t=2}^{T}\sum_{i=1}^n
\sum_{j=1}^n\|\epsilon^x_{i,t-1}\|\|\mu_j\|\nonumber\\
&\le \varepsilon_2\sum_{i=1}^n\|\mu_i\|
+\frac{1}{n}\sum_{t=2}^{T}\sum_{i=1}^n\sum_{j=1}^n\Big(\frac{1}{4aF_2\alpha_t}\|\epsilon^x_{i,t-1}\|^2
+aF_2\alpha_t\|\mu_j\|^2\Big)\nonumber\\
&\quad
+\sum_{t=2}^{T}\sum_{i=1}^n\Big(\frac{1}{4aF_2\alpha_t}\|\epsilon^x_{i,t-1}\|^2
+aF_2\alpha_t\|\mu_i\|^2\Big)\nonumber\\
&\quad
+\sum_{t=2}^{T}\sum_{i=1}^n\sum_{j=1}^n\Big(\frac{1}{2anF_2\alpha_t}\|\epsilon^x_{i,t-1}\|^2
+\frac{anF_2\tau^2\alpha_t}{2(1-\lambda)^2}\|\mu_j\|^2\Big)\nonumber\\
&= \varepsilon_2\sum_{i=1}^n\|\mu_i\|\
+\sum_{t=2}^{T}\sum_{i=1}^n\Big(a\varepsilon_3\alpha_t\|\mu_i\|^2
+\frac{1}{aF_2\alpha_t}\|\epsilon^x_{i,t-1}\|^2\Big).
\label{online_op:xitxbarT}
\end{align}

For any $\mu_i\in\mathbb{R}^{m_i}$ and $a>0$, we have
\begin{align}
\|\mu_i\|\|x_{i,t}-x_{i,t+1}\|
&\le\|\mu_i\|\|x_{i,t}-z_{i,t+1}\|+\|\mu_i\|\|z_{i,t+1}-x_{i,t+1}\|\nonumber\\
&\le\|\mu_i\|\|x_{i,t}-z_{i,t+1}\|+\frac{1}{aF_2\alpha_{t+1}}\|\epsilon^x_{i,t}\|^2
+\frac{aF_2\alpha_{t+1}}{4}\|\mu_i\|^2.\label{online_op:fxx}
\end{align}

From (\ref{online_op:al_z}) and $\sum_{i=1}^n[W_t]_{ij}=\sum_{j=1}^n[W_t]_{ij}=1$,  we have
\begin{align}
\sum_{i=1}^n\|x_{i,t}-z_{i,t+1}\|
&\le \sum_{i=1}^n(\|x_{i,t}-\bar{x}_t\|+\|\bar{x}_t-z_{i,t+1}\|)\nonumber\\
&=\sum_{i=1}^n\Big(\|x_{i,t}-\bar{x}_t\|
+\Big\|\bar{x}_t-\sum_{j=1}^n[W_t]_{ij}x_{j,t}\Big\|\Big)\nonumber\\
&\le\sum_{i=1}^n\|x_{i,t}-\bar{x}_t\|
+\sum_{i=1}^n\sum_{j=1}^n[W_t]_{ij}\|\bar{x}_t-x_{j,t}\|\nonumber\\
&=2\sum_{i=1}^n\|x_{i,t}-\bar{x}_t\|.\label{online_op:xz}
\end{align}

From \eqref{online_op:xitxbarT}--\eqref{online_op:xz}, for any $\mu_i\in\mathbb{R}^{m_i}$, and $a>0$, we have
\begin{align}\label{online_op:xitxtildemu}
\sum_{t=1}^{T}\sum_{i=1}^nF_2\|\mu_i\|\|x_{i,t}-x_{i,t+1}\|
&\le 2F_2\varepsilon_2\sum_{i=1}^n\|\mu_i\|
+\sum_{t=2}^{T}\sum_{i=1}^n\Big(2aF_2\varepsilon_3\alpha_t\|\mu_i\|^2
+\frac{2}{a\alpha_t}\|\epsilon^x_{i,t-1}\|^2\Big)\nonumber\\
&\quad+\sum_{t=1}^{T}\sum_{i=1}^n
\Big(\frac{1}{a\alpha_{t+1}}\|\epsilon^x_{i,t}\|^2
+\frac{aF_2^2\alpha_{t+1}}{4}\|\mu_i\|^2\Big)\nonumber\\
&\le 2F_2\varepsilon_2\sum_{i=1}^n\|\mu_i\|
+\sum_{t=1}^{T}\sum_{i=1}^n\Big(\frac{3}{a\alpha_{t+1}}\|\epsilon^x_{i,t}\|^2
+a\varepsilon_4\alpha_{t}\|\mu_i\|^2\Big).
\end{align}

Choosing $\|\mu_i\|=1$ in \eqref{online_op:xitxbarT} yields
\begin{align}
&\sum_{t=1}^{T}\sum_{i=1}^n2F_2\|x_{i,t}-\bar{x}_{t}\|\le 2nF_2\varepsilon_2
+\sum_{t=2}^{T}\sum_{i=1}^n\Big(2aF_2\varepsilon_3\alpha_t
+\frac{2}{a\alpha_t}\|\epsilon^x_{i,t-1}\|^2\Big).
\label{online_op:xitxbarT2}
\end{align}

Choosing $\|\mu_i\|=1$ in \eqref{online_op:xitxtildemu} yields
\begin{align}\label{online_op:xitxtilde}
&\sum_{t=1}^{T}\sum_{i=1}^nF_2\|x_{i,t}-x_{i,t+1}\|\le 2nF_2\varepsilon_2
+\sum_{t=1}^{T}\sum_{i=1}^n\frac{3}{a\alpha_{t+1}}\|\epsilon^x_{i,t}\|^2
+\sum_{t=1}^{T}an\varepsilon_4\alpha_{t}.
\end{align}

Combining \eqref{online_op:xitxbarT2} and \eqref{online_op:xitxtilde}, and choosing $a=10$ yields
\begin{align}\label{online_op:xitxtilde2}
\sum_{t=1}^{T}\sum_{i=1}^n\tilde{\Delta}_{i,t+1}(\bm{0}_{m_i})\le 4nF_2\varepsilon_2
+\sum_{t=1}^{T}10n\varepsilon_5\alpha_{t}.
\end{align}

\noindent {\bf (i-3)} Combining (\ref{online_op:xitxtilde2}), (\ref{online_op:delta_zero}), and \eqref{online_op:theoremregequ2} yields \eqref{online_op:theoremregequ}.

\noindent {\bf (ii)} We first provide a loose bound for network  cumulative constraint violation.

We have
\begin{align}\label{online_op:mug}
\mu_i^\top[g_{i,t}(x_{i,t})]_+
&=\mu_i^\top[g_{i,t}(x_{j,t})]_+
+\mu_i^\top[g_{i,t}(x_{i,t})]_+-\mu_i^\top[g_{i,t}(x_{j,t})]_+\nonumber\\
&\ge\mu_i^\top[g_{i,t}(x_{j,t})]_+
-\|\mu_i\|\|[g_{i,t}(x_{i,t})]_+-[g_{i,t}(x_{j,t})]_+\|\nonumber\\
&\ge\mu_i^\top[g_{i,t}(x_{j,t})]_+
-\|\mu_i\|\|g_{i,t}(x_{i,t})-g_{i,t}(x_{j,t})\|\nonumber\\
&\ge\mu_i^\top[g_{i,t}(x_{j,t})]_+
-F_2\|\mu_i\|\|x_{i,t}-x_{j,t}\|\nonumber\\
&\ge\mu_i^\top[g_{i,t}(x_{j,t})]_+
-F_2\|\mu_i\|(\|x_{i,t}-\bar{x}_{t}\|+\|x_{j,t}-\bar{x}_{t}\|),
\end{align}
where the second inequality holds since that the projection operator is non-expansive and the third inequality holds due to \eqref{online_op:assfunction:functionLipg}.

Combining \eqref{online_op:theoremregequ1} and \eqref{online_op:mug}, setting $y_t=y$, and summing over $j\in[n]$ yields
\begin{align}\label{online_op:theoremregequ1_g}
&\sum_{i=1}^n\Big(\Delta_{i,t+1}(\mu_i)+\frac{1}{ n}\sum_{j=1}^n\mu_i^\top[g_{i,t}(x_{j,t})]_+
-\frac{1}{2}\beta_{t+1}\|\mu_i\|^2\Big)+\sum_{i=1}^nf_{t}(x_{i,t})
-nf_{t}(y)\nonumber\\
&\le n\varepsilon_1\gamma_{t+1}+\sum_{i=1}^n\hat{\Delta}_{i,t+1}(\mu_i)
+\frac{1}{ n}\check{\Delta}_t+\frac{1}{ 2\alpha_{t+1}}\sum_{i=1}^n(\|y-z_{i,t+1}\|^2-\|y-z_{i,t+2}\|^2),
\end{align}
where
\begin{align*}
\hat{\Delta}_{i,t+1}(\mu_i)
&=F_2\|\mu_i\|\|x_{i,t}-\bar{x}_{i,t}\|+\tilde{\Delta}_{i,t+1}(\mu_i),\\
\check{\Delta}_t&=\sum_{j=1}^n\sum_{i=1}^nF_2\|\mu_i\|\|x_{j,t}-\bar{x}_{t}\|.
\end{align*}

To get \eqref{online_op:theoremconsequ}, we then establish upper bounds for $\sum_{t=1}^{T}\sum_{i=1}^n\hat{\Delta}_{i,t+1}(\mu_i)$ and $\sum_{t=1}^{T}\check{\Delta}_t$.

\noindent {\bf (ii-1)} Establish an upper bound for $\sum_{t=1}^{T}\sum_{i=1}^n\hat{\Delta}_{i,t+1}(\mu_i)$.

Combining \eqref{online_op:xitxbarT} and \eqref{online_op:xitxtildemu}--\eqref{online_op:xitxtilde}, and choosing $a=20$ yields
\begin{align}\label{online_op:xitxtilde2_g}
\sum_{t=1}^{T}\sum_{i=1}^n\hat{\Delta}_{i,t+1}(\mu_i)
&\le 4nF_2\varepsilon_2+\sum_{t=1}^{T}20n\varepsilon_5\alpha_{t}+
3F_2\varepsilon_2\sum_{i=1}^n\|\mu_i\|\nonumber\\
&\quad+\sum_{t=1}^{T}\sum_{i=1}^n20(F_2\varepsilon_3+\varepsilon_4)\alpha_{t}\|\mu_i\|^2 -\sum_{t=1}^{T}\sum_{i=1}^n\frac{1}{20\alpha_{t+1}}\|\epsilon^x_{i,t}\|^2.
\end{align}

\noindent {\bf (ii-2)} Establish an upper bound for $\frac{1}{n}\sum_{t=1}^{T}\check{\Delta}_t$.

From \eqref{online_op:xitxbar} and \eqref{online_op:xitxbarT1}, for any $\mu_j\in\mathbb{R}^{m_j}$, and $a>0$, we have
\begin{align}
&\sum_{t=1}^{T}\sum_{i=1}^n\sum_{j=1}^n\|\mu_j\|\|x_{i,t}-\bar{x}_{t}\|\nonumber\\
&\le n\varepsilon_2\sum_{j=1}^n\|\mu_j\|
+2\sum_{t=2}^{T}\sum_{i=1}^n\sum_{j=1}^n\|\epsilon^x_{i,t-1}\|\|\mu_j\|
+\frac{n\tau}{1-\lambda}\sum_{t=1}^{T-2}\sum_{i=1}^n
\sum_{j=1}^n\|\epsilon^x_{i,t}\|\|\mu_j\|\nonumber\\
&\le n\varepsilon_2\sum_{j=1}^n\|\mu_j\|
+2\sum_{t=2}^{T}\sum_{i=1}^n\sum_{j=1}^n\|\epsilon^x_{i,t-1}\|\|\mu_j\|
+\frac{n\tau}{1-\lambda}\sum_{t=2}^{T}\sum_{i=1}^n
\sum_{j=1}^n\|\epsilon^x_{i,t-1}\|\|\mu_j\|\nonumber\\
&\le n\varepsilon_2\sum_{j=1}^n\|\mu_j\|
+\sum_{t=2}^{T}\sum_{i=1}^n\sum_{j=1}^n\Big(\frac{1}{2aF_2\alpha_t}\|\epsilon^x_{i,t-1}\|^2
+2aF_2\alpha_t\|\mu_j\|^2\Big)\nonumber\\
&\quad
+\sum_{t=2}^{T}\sum_{i=1}^n\sum_{j=1}^n\Big(\frac{1}{2aF_2\alpha_t}\|\epsilon^x_{i,t-1}\|^2
+\frac{an^2F_2\tau^2\alpha_t}{2(1-\lambda)^2}\|\mu_j\|^2\Big)\nonumber\\
&= n\varepsilon_2\sum_{i=1}^n\|\mu_i\|
+\sum_{t=2}^{T}\sum_{i=1}^n\Big(na\varepsilon_3\alpha_t\|\mu_i\|^2
+\frac{n}{aF_2\alpha_t}\|\epsilon^x_{i,t-1}\|^2\Big).
\label{online_op:xitxbarTij}
\end{align}

Choosing $a=20$ in \eqref{online_op:xitxbarTij} yields
\begin{align}
\sum_{t=1}^{T}\frac{1}{n}\sum_{t=1}^{T}\check{\Delta}_t
&\le F_2\varepsilon_2\sum_{i=1}^n\|\mu_i\|
+\sum_{t=2}^{T}\sum_{i=1}^n\Big(20F_2\varepsilon_3\alpha_t\|\mu_i\|^2
+\frac{1}{20\alpha_t}\|\epsilon^x_{i,t-1}\|^2\Big).
\label{online_op:xitxbarTij_g}
\end{align}

\noindent {\bf (ii-3)} Prove \eqref{online_op:theoremconsequ}.

Let $h_{ij}:\mathbb{R}^{m_i}_{+}\rightarrow\mathbb{R}$ be a function defined as
\begin{align}\label{online_op:gc}
h_{ij}(\mu_i)&=\mu_i^\top\sum_{t=1}^T[g_{i,t}(x_{j,t})]_+ -\frac{1}{2}\|u_i\|^2\Big(\frac{1}{\gamma_1}
+\sum_{t=1}^T(\beta_{t}+\varepsilon_6\alpha_{t})\Big).
\end{align}

Then, noting \eqref{online_op:dyz}, (\ref{online_op:delta}), (\ref{online_op:xitxtilde2_g}), \eqref{online_op:xitxbarTij_g}, and \eqref{online_op:gc}, and summing (\ref{online_op:theoremregequ1_g}) over $t\in[T]$ gives
\begin{align}\label{online_op:theoremregequ2_g}
&\frac{1}{2}\sum_{t=1}^T\sum_{i=1}^n
\Big(\frac{1}{\gamma_{t}}-\frac{1}{\gamma_{t+1}}+\beta_{t+1}\Big)
\|q_{i,t}-\mu_i\|^2+\frac{1}{n}\sum_{i=1}^n\sum_{j=1}^nh_{ij}(\mu_i)+n\NetReg(\{x_{i,t}\},\{y\})\nonumber\\
&\le 4nF_2\varepsilon_2+n\sum_{t=1}^T(\varepsilon_1\gamma_{t}+20\varepsilon_5\alpha_{t})
+4F_2\varepsilon_2\sum_{i=1}^n\|\mu_i\|+\frac{2nR(\mathbb{X})^2}{\alpha_{T+1}}.
\end{align}

Noting that $\mu_{ij}^0=\frac{\sum_{t=1}^T[g_{i,t}(x_{j,t})]_+}
{\frac{1}{\gamma_1}
	+\sum_{t=1}^T(\beta_{t}+2\varepsilon_6\alpha_{t})}$ and substituting $\mu_i=\mu_{ij}^0\in\mathbb{R}^m_{+}$ into \eqref{online_op:gc} yields
\begin{align}\label{online_op:gcequ}
h_{ij}(\mu_{ij}^0)=\frac{\|\sum_{t=1}^T[g_{i,t}(x_{j,t})]_+\Big\|^2}
{2(\frac{1}{\gamma_1}
+\sum_{t=1}^T(\beta_{t}+\varepsilon_6\alpha_{t}))}.
\end{align}

From $g_t(x)=\col(g_{1,t}(x),\dots,g_{n,t}(x))$, we have
\begin{align}
\sum_{i=1}^n\sum_{j=1}^n\Big\|\sum_{t=1}^T[g_{i,t}(x_{j,t})]_+\Big\|^2
=\sum_{j=1}^n\Big\|\sum_{t=1}^T[g_{t}(x_{j,t})]_+\Big\|^2.
\end{align}

From the definition of $\mu_{ij}^0$ and (\ref{online_op:ftgtupper-b}), we have
\begin{align}\label{online_op:mu0equ}
\|\mu_i^0\|\le\frac{F_1T}{\frac{1}{\gamma_1}
+\sum_{t=1}^T(\beta_{t}+\varepsilon_6\alpha_{t})}.
\end{align}

From (\ref{online_op:ftgtupper-a}), we have
\begin{align}\label{online_op:ff}
-\NetReg(\{x_{i,t}\},\{y\})\le F_1T.
\end{align}

Substituting $\mu_{ij}=\mu_{ij}^0$ into (\ref{online_op:theoremregequ2_g}), using \eqref{online_op:gcequ}--\eqref{online_op:ff},  and rearranging  terms yields \eqref{online_op:theoremconsequ}.
\end{proof}

We are now ready to prove Theorem~\ref{online_op:corollaryreg}. The proof is to substitute the specially designed parameter sequences in \eqref{online_op:stepsize1} into the bounds provided in Lemma~\ref{online_op:theoremreg}.

\noindent {\bf (i)} For any constant $a\in[0,1)$ and $T\in\mathbb{N}_+$, it holds that
\begin{align}\label{online_op:sequenceupp}
\sum_{t=1}^T\frac{1}{t^a}\le\int_1^T\frac{1}{t^a}dt+1=\frac{T^{1-a}-a}{1-a}\le\frac{T^{1-a}}{1-a}.
\end{align}
From \eqref{online_op:stepsize1} and (\ref{online_op:sequenceupp}), we have
\begin{align}
\sum_{t=1}^T(\varepsilon_1\gamma_{t}+10\varepsilon_5\alpha_{t})
\le\frac{\varepsilon_1}{\kappa}T^{\kappa}+\frac{10\varepsilon_5\alpha_0}{1-\kappa}T^{1-\kappa}.
\label{online_op:corollaryregequ11}
\end{align}

 From \eqref{online_op:stepsize1}, we have
\begin{align}\label{online_op:betatgammat}
\frac{1}{\gamma_{t}}-\frac{1}{\gamma_{t+1}}+\beta_{t+1}
&=\frac{t}{t^\kappa}-\frac{t+1}{(t+1)^\kappa}+\frac{1}{t^\kappa}
=\frac{t+1}{t^\kappa}-\frac{t+1}{(t+1)^\kappa}>0.
\end{align}

Combining  (\ref{online_op:theoremregequ}), (\ref{online_op:corollaryregequ11}), and (\ref{online_op:betatgammat}) yields
\begin{align}\label{online_op:corollaryregequ1_proof}
\NetReg(\{x_{i,t}\},y_{[T]})
&\le 4F_2\varepsilon_2
+\frac{\varepsilon_1}{\kappa}T^\kappa+\frac{10\varepsilon_5\alpha_0}{1-\kappa}T^{1-\kappa}
+\frac{4R(\mathbb{X})^2T^\kappa}{\alpha_0}+\frac{2R(\mathbb{X})T^\kappa P_T}{\alpha_0},
\end{align}
which gives \eqref{online_op:corollaryregequ1}.

\noindent {\bf (ii)}  From \eqref{online_op:stepsize1} and (\ref{online_op:sequenceupp}), we have
\begin{align}
\sum_{t=1}^T(\varepsilon_1\gamma_{t}+20\varepsilon_5\alpha_{t})
&\le\frac{\varepsilon_1}{\kappa}T^{\kappa}+\frac{20\varepsilon_5\alpha_0}{(1-\kappa)}T^{1-\kappa},
\label{online_op:corollaryregequ11_g}\\
\sum_{t=1}^T(\beta_{t}+\varepsilon_6\alpha_{t})
&\le\frac{1+\varepsilon_6\alpha_0}{1-\kappa}T^{1-\kappa}.\label{online_op:corollaryregequ12}
\end{align}
Combining (\ref{online_op:theoremconsequ}), (\ref{online_op:betatgammat}), \eqref{online_op:corollaryregequ11_g}, and  (\ref{online_op:corollaryregequ12}) yields
\begin{align}\label{online_op:corollaryconsequ_proof}
&\Big(\frac{1}{n}\sum_{j=1}^n\Big\|\sum_{t=1}^T[g_{t}(x_{j,t})]_+\Big\|\Big)^2
\le\frac{1}{n}\sum_{j=1}^n\Big\|\sum_{t=1}^T[g_{t}(x_{j,t})]_+\Big\|^2\nonumber\\
&\le 4n\varepsilon_2F_1F_2T
+2n\Big(1+\frac{1+\varepsilon_6\alpha_0}{1-\kappa}T^{1-\kappa}\Big)
\Big(F_1T +\frac{\varepsilon_1}{\kappa}T^\kappa+\frac{20\varepsilon_5\alpha_0}{1-\kappa}T^{1-\kappa}
+\frac{4R(\mathbb{X})^2T^\kappa}{\alpha_0}\Big).
\end{align}
Combining \eqref{online_op:corollaryconsequ_proof} and
\begin{align}\label{online_op:corollaryconsequ_proof2}
&\sum_{t=1}^T\|[g_{t}(x_{j,t})]_+\|\le\sum_{t=1}^T\|[g_{t}(x_{j,t})]_+\|_1
=\Big\|\sum_{t=1}^T[g_{t}(x_{j,t})]_+\Big\|_1\le\sqrt{m}\Big\|\sum_{t=1}^T[g_{t}(x_{j,t})]_+\Big\|.
\end{align}
yields \eqref{online_op:corollaryconsequ}.

\subsection{Proof of Theorem~\ref{online_op:corollaryreg_sc}}\label{online_op:corollaryregproof_sc}

In addition to the notations defined in the proof of Theorem~\ref{online_op:corollaryreg}, we also denote $\varepsilon_7=\lceil(\frac{1}{\mu})^{\frac{1}{1-c}}\rceil$, where $\lceil \cdot\rceil$ is the ceiling function.

\noindent {\bf (i)} Under Assumption~\ref{online_op:assstrongconvex}, \eqref{online_op:fxy} can be replaced by
\begin{align}\label{online_op:fxy_sc}
f_{i,t}(x_{i,t})-f_{i,t}(y_t)
&\le F_2\|x_{i,t}-x_{i,t+1}\|-\frac{\mu}{2}\|y_t-x_{i,t}\|^2
+\langle\partial f_{i,t}(x_{i,t}),x_{i,t+1}-y_t\rangle.
\end{align}
Note that compared with \eqref{online_op:fxy}, \eqref{online_op:fxy_sc} has an extra term $-\frac{\mu}{2}\|y_t-x_{i,t}\|^2$.
Then, \eqref{online_op:dyz} can be replaced by
\begin{align}
&\frac{1}{n}\sum_{i=1}^{n}\Big(\frac{1}{\alpha_{t+1}}(\|y_t-z_{i,t+1}\|^2
-\|y_{t+1}-z_{i,t+2}\|^2)
-\mu\|y_t-x_{i,t}\|^2\Big)\nonumber\\
&=\frac{1}{n}\sum_{i=1}^{n}\Big(\frac{1}{\alpha_{t}}\|y_t-z_{i,t+1}\|^2
-\frac{1}{\alpha_{t+1}}\|y_{t+1}-z_{i,t+2}\|^2\nonumber\\
&\quad+\Big(\frac{1}{\alpha_{t+1}}-\frac{1}{\alpha_{t}}\Big)\|y_t-z_{i,t+1}\|^2
-\mu\|y_t-x_{i,t}\|^2\Big)\nonumber\\
&=\frac{1}{n}\sum_{i=1}^{n}\Big(\frac{1}{\alpha_{t}}\|y_t-z_{i,t+1}\|^2
-\frac{1}{\alpha_{t+1}}\|y_{t+1}-z_{i,t+2}\|^2\nonumber\\
&\quad+\Big(\frac{1}{\alpha_{t+1}}-\frac{1}{\alpha_{t}}\Big)
\big\|y_{t}-\sum_{j=1}^{n}[W_t]_{ij}x_{j,t}\|^2
-\mu\|y_t-x_{i,t}\|^2\Big)\nonumber\\
&\le\frac{1}{n}\sum_{i=1}^{n}\Big(\frac{1}{\alpha_{t}}\|y_t-z_{i,t+1}\|^2
-\frac{1}{\alpha_{t+1}}\|y_{t+1}-z_{i,t+2}\|^2\nonumber\\
&\quad+\Big(\frac{1}{\alpha_{t+1}}-\frac{1}{\alpha_{t}}\Big)
\sum_{j=1}^{n}[W_t]_{ij}\|y_t-x_{i,t}\|^2
-\mu\|y_t-x_{i,t}\|^2\Big)\nonumber\\
&=\frac{1}{n}\sum_{i=1}^{n}\Big(\frac{1}{\alpha_{t}}\|y_t-z_{i,t+1}\|^2
-\frac{1}{\alpha_{t+1}}\|y_{t+1}-z_{i,t+2}\|^2
+\Big(\frac{1}{\alpha_{t+1}}-\frac{1}{\alpha_{t}}-\mu\Big)
\|y_t-x_{i,t}\|^2\Big),\label{online_op:dyz_sc}
\end{align}
where the inequality holds due to $\sum_{j=1}^{n}[W_t]_{ij}=1$; and the last equality holds due to $\sum_{i=1}^{n}[W_t]_{ij}=1$.

When $t\ge\varepsilon_7$,  we have
\begin{align}\label{online_op:mu_sc}
\frac{1}{\alpha_{t+1}}
-\frac{1}{\alpha_{t}}-\mu
&=\frac{t+1}{(t+1)^{1-c}}-\frac{t}{t^{1-c}}-\mu
<\frac{1}{t^{1-c}}-\mu\le0.
\end{align}

Similar to the way to get \eqref{online_op:corollaryregequ1_proof}, from \eqref{online_op:stepsize1_sc},  \eqref{online_op:dyz_sc}, and \eqref{online_op:mu_sc},  we have
\begin{align}\label{online_op:corollaryregequ1_sc_proof}
\NetReg(\{x_{i,t}\},\check{x}^*_{T})
&\le 4F_2\varepsilon_2
+\frac{\varepsilon_1}{\kappa}T^\kappa+\frac{10\varepsilon_5}{(1-c)}T^{1-c}
+\frac{1}{n}\sum_{i=1}^{n}\frac{1}{\alpha_{1}}\|y_1-z_{i,2}\|^2\nonumber\\
&\quad
+\frac{1}{n}\sum_{i=1}^{n}\sum_{t=1}^{\varepsilon_7-1}\Big(\frac{1}{\alpha_{t+1}}-\frac{1}{\alpha_{t}}-\mu\Big)
\|y_t-x_{i,t}\|^2\nonumber\\
&\le 4F_2\varepsilon_2
+\frac{\varepsilon_1}{\kappa}T^\kappa+\frac{10\varepsilon_5}{1-c}T^{1-c}
+4(1+(\varepsilon_7-1)[1-\mu]_+)R(\mathbb{X})^2,
\end{align}

Noting that $\kappa\ge1-c$ due to $c\ge1-\kappa$, from \eqref{online_op:corollaryregequ1_sc_proof}, we have \eqref{online_op:corollaryregequ1_sc}.

\noindent {\bf (ii)} Similar to the way to get \eqref{online_op:corollaryconsequ_proof}, from \eqref{online_op:stepsize1_sc}, \eqref{online_op:dyz_sc}, and \eqref{online_op:mu_sc},  we have
\begin{align}\label{online_op:corollaryconsequ_proof_sc}
\Big(\frac{1}{n}\sum_{j=1}^n\Big\|\sum_{t=1}^T[g_{t}(x_{j,t})]_+\Big\|\Big)^2
&\le 4n\varepsilon_2F_1F_2T
+2n\Big(1+\frac{1}{1-\kappa}T^{1-\kappa}+\frac{\varepsilon_6}{1-c}T^{1-c}\Big)
\Big(F_1T\nonumber\\
&\quad+\frac{\varepsilon_1}{\kappa}T^\kappa +\frac{20\varepsilon_5}{1-c}T^{1-c}
+4(1+(\varepsilon_7-1)[1-\mu]_+)R(\mathbb{X})^2\Big).
\end{align}

Noting that $1-\kappa\ge1-c$ due to $c\ge\kappa$, from  \eqref{online_op:corollaryconsequ_proof2} and \eqref{online_op:corollaryconsequ_proof_sc}, we have \eqref{online_op:corollaryconsequ_sc}.

\subsection{Proof of Theorem~\ref{online_op:corollaryreg_bandit}}\label{online_op:corollaryregproof_bandit}
In addition to the notations defined in Appendix~\ref{online_op:corollaryregproof},
we also denote $\hat{f}_{i,t}(x)=\mathbf{E}_{v\in\mathbb{B}^p}[f_{i,t}(x+\delta_{t} v)]$, $[\hat{g}_{i,t}(x)]_+=\mathbf{E}_{v\in\mathbb{B}^p}[[g_{i,t}(x+\delta_{t} v)]_+]$, $\hat{b}_{i,t}=[g_{i,t-1}(x_{i,t-1})]_+ +(\hat{\partial} [g_{i,t-1}(x_{i,t-1})]_+)^\top(x_{i,t}-x_{i,t-1})$,  $\hat{y}_{t}=(1-\xi_t)y_{t}$, $\hat{\varepsilon}_{1}=F_1+2pF_2R(\mathbb{X})$,
$\hat{\varepsilon}_5=2F_2\varepsilon_3+p\varepsilon_4$,
$\hat{\varepsilon}_6=40\hat{\varepsilon}_5$, and $\hat{\mu}_{ij}^0=\frac{\sum_{t=1}^T[g_{i,t}(x_{j,t})]_+}
{\frac{1}{\gamma_1}+\sum_{t=1}^T(\beta_{t}+2\hat{\varepsilon}_6\alpha_{t})}$.

To prove Theorem~\ref{online_op:corollaryreg_bandit}, we need some preliminary results.

We first present some useful properties of the gradient estimators $\hat{\partial}f_{i,t}(\cdot)$ and $\hat{\partial}[g_{i,t}(\cdot)]_+$, which are direct extensions of Lemmas~\ref{online_op:lemma_projection} and \ref{dbco:lemma:uniformsmoothing}.
\begin{lemma}\label{online_op:lemma_gradient}
Assume Assumption~\ref{online_op:assfunction} holds.
Then, $\hat{f}_{i,t}(x)$ and $[\hat{g}_{i,t}(x)]_+$ are convex on $(1-\xi_{t})\mathbb{X}$, and for any $i\in[n]$, $t\in\mathbb{N}_+$ $x\in(1-\xi_{t})\mathbb{X}$, $q\in\mathbb{R}^{m_i}_+$,
\begin{subequations}
\begin{align}
&\partial\hat{f}_{i,t}(x)=\mathbf{E}_{\mathfrak{U}_{t}}[\hat{\partial}f_{i,t}(x)],
\label{dbco:lemma_regretdeltaequ:fsmooth1}\\
&f_{i,t}(x)\le \hat{f}_{i,t}(x)\le f_{i,t}(x)+F_2\delta_{t} ,\label{dbco:lemma_regretdeltaequ:fsmooth2}\\
&\|\hat{\partial}f_{i,t}(x)\|\le pF_2,
\label{dbco:lemma_regretdeltaequ:fsmooth3}\\
&\partial[\hat{g}_{i,t}(x)]_+=\mathbf{E}_{\mathfrak{U}_{t}}[\hat{\partial}[g_{i,t}(x)]_+],
\label{dbco:lemma_regretdeltaequ:gsmooth1}\\
&q^\top[g_{i,t}(x)]_+\le q^\top[\hat{g}_{i,t}(x)]_+\le q^\top[g_{i,t}(x)]_++ F_2\delta_{t}\|q\|,\label{dbco:lemma_regretdeltaequ:gsmooth2}\\
&\|\hat{\partial}[g_{i,t}(x)]_+\|\le pF_2,
\label{dbco:lemma_regretdeltaequ:gsmooth4}\\
&\|[\hat{g}_{i,t}(x)]_+\|\le F_1.\label{dbco:lemma_regretdeltaequ:gsmooth3}
\end{align}
\end{subequations}
\end{lemma}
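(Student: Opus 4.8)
The plan is to obtain every item as a specialization of the uniform smoothing lemma (Lemma~\ref{dbco:lemma:uniformsmoothing}) applied to the two functions $f_{i,t}$ and $[g_{i,t}]_+$. The whole argument therefore reduces to checking that these two functions satisfy the hypotheses of Lemma~\ref{dbco:lemma:uniformsmoothing} on $\mathbb{K}=\mathbb{X}$ (which contains $r(\mathbb{X})\mathbb{B}^p$ by the first part of Assumption~\ref{online_op:assfunction}), with the exploration parameter $\delta_t\in(0,r(\mathbb{X})\xi_t]$ playing the role of $\delta$, and then reading off the five conclusions for each function.

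First I would record the required regularity. The function $f_{i,t}$ is convex by the second part of Assumption~\ref{online_op:assfunction} and Lipschitz on $\mathbb{X}$ with constant $F_2$, as in \eqref{online_op:assfunction:functionLipf}. For the constraint, $[g_{i,t}]_+$ is convex because $g_{i,t}$ is convex and the componentwise positive part is convex and nondecreasing; it is Lipschitz with the same constant $F_2$ because the projection $[\cdot]_+$ onto $\mathbb{R}^{m_i}_+$ is nonexpansive, so $\|[g_{i,t}(x)]_+-[g_{i,t}(y)]_+\|\le\|g_{i,t}(x)-g_{i,t}(y)\|\le F_2\|x-y\|$ by \eqref{online_op:assfunction:functionLipg}; and it is bounded by $F_1$ since $\|[g_{i,t}(x)]_+\|\le\|g_{i,t}(x)\|\le F_1$ by \eqref{online_op:ftgtupper-b}. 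Thus $f_{i,t}$ and $[g_{i,t}]_+$ satisfy parts (a)--(d) of Lemma~\ref{dbco:lemma:uniformsmoothing} with $L_0=F_2$ and $F_0=F_1$.

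Applying the lemma then yields everything. Part (a) gives \eqref{dbco:lemma_regretdeltaequ:fsmooth1} and \eqref{dbco:lemma_regretdeltaequ:gsmooth1}, after noting that $\hat{\partial}f_{i,t}(x)$ and $\hat{\partial}[g_{i,t}(x)]_+$ depend only on $u_{i,t}$, so the expectation $\mathbf{E}_{u\in\mathbb{S}^p}$ in the lemma coincides with the conditional expectation $\mathbf{E}_{\mathfrak{U}_t}$ used in the statement. Part (b) gives convexity of $\hat{f}_{i,t}$ and $[\hat{g}_{i,t}]_+$ together with $f_{i,t}(x)\le\hat{f}_{i,t}(x)$ and $[g_{i,t}(x)]_+\le[\hat{g}_{i,t}(x)]_+$; the latter, paired with $q\ge\bm{0}_{m_i}$, gives the left inequality of \eqref{dbco:lemma_regretdeltaequ:gsmooth2}, and likewise the left inequality of \eqref{dbco:lemma_regretdeltaequ:fsmooth2}. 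Part (c) gives the norm bounds $\|\hat{\partial}f_{i,t}(x)\|\le pF_2$ and $\|\hat{\partial}[g_{i,t}(x)]_+\|\le pF_2$, i.e.\ \eqref{dbco:lemma_regretdeltaequ:fsmooth3} and \eqref{dbco:lemma_regretdeltaequ:gsmooth4}, together with the approximation errors $\|\hat{f}_{i,t}(x)-f_{i,t}(x)\|\le F_2\delta_t$ and $\|[\hat{g}_{i,t}(x)]_+-[g_{i,t}(x)]_+\|\le F_2\delta_t$; the first is exactly the right inequality of \eqref{dbco:lemma_regretdeltaequ:fsmooth2}, while the second combined with Cauchy--Schwarz, $q^\top([\hat{g}_{i,t}(x)]_+-[g_{i,t}(x)]_+)\le\|q\|\,F_2\delta_t$, supplies the right inequality of \eqref{dbco:lemma_regretdeltaequ:gsmooth2}. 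Finally, part (d) applied to the $F_1$-bounded function $[g_{i,t}]_+$ yields \eqref{dbco:lemma_regretdeltaequ:gsmooth3}.

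Since Lemma~\ref{dbco:lemma:uniformsmoothing} does the analytic work, there is no serious obstacle; the only points deserving care are the verification that $[g_{i,t}]_+$ inherits convexity, $F_2$-Lipschitzness (via nonexpansiveness of the positive-part projection) and the $F_1$ bound so that the smoothing lemma genuinely applies to it, and the short Cauchy--Schwarz step that converts the vector approximation error into the $q$-weighted inequality \eqref{dbco:lemma_regretdeltaequ:gsmooth2}. The identification $\mathbf{E}_{\mathfrak{U}_t}=\mathbf{E}_{u_{i,t}\in\mathbb{S}^p}$ should also be stated explicitly, since the lemma is phrased for a single smoothing direction whereas the algorithm draws one such direction per agent.
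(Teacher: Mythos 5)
Your proposal is correct and matches the paper's intent exactly: the paper gives no separate proof of this lemma, stating only that it is a ``direct extension'' of Lemma~\ref{online_op:lemma_projection} and Lemma~\ref{dbco:lemma:uniformsmoothing}, and your argument is precisely the filling-in of that claim --- verifying that $f_{i,t}$ and $[g_{i,t}]_+$ satisfy the smoothing lemma's hypotheses (with $[g_{i,t}]_+$ inheriting convexity, $F_2$-Lipschitzness via nonexpansiveness of $[\cdot]_+$, and the $F_1$ bound), then reading off each item, with the Cauchy--Schwarz step for \eqref{dbco:lemma_regretdeltaequ:gsmooth2} and the identification $\mathbf{E}_{\mathfrak{U}_t}=\mathbf{E}_{u_{i,t}\in\mathbb{S}^p}$ handled correctly.
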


The rest of the proof has similar structure as the proof of Theorem~\ref{online_op:corollaryreg}, but the details are different. Note that Lemma~\ref{online_op:lemma_neterror} still holds since \eqref{online_op:al_z} and \eqref{online_op:al_z_bandit} are the same. Moreover, Lemmas~\ref{online_op:lemma_virtualbound}--\ref{online_op:theoremreg} are respectively replaced by Lemmas~\ref{online_op:lemma_virtualbound_bandit}--\ref{online_op:theoremreg_bandit} in the following.

\begin{lemma}\label{online_op:lemma_virtualbound_bandit}
Suppose Assumptions~\ref{online_op:assfunction}--\ref{online_op:assgraph} hold and $\gamma_{t}\beta_{t}\le1,~t\in\mathbb{N}_+$. For all $i\in[n]$ and $t\in\mathbb{N}_+$, $q_{i,t}$ generated by Algorithm~\ref{online_op:algorithm_bandit} satisfy
\begin{align}
\Delta_{i,t}(\mu_i)&\le 2\hat{\varepsilon}_{1}^2\gamma_t
+q_{i,t-1}^\top \hat{b}_{i,t}-\mu_i^\top[g_{i,t-1}(x_{i,t-1})]_+ 
+\frac{1}{2}\beta_t\|\mu_i\|^2+pF_2\|\mu_i\|\|x_{i,t}-x_{i,t-1}\|.
\label{online_op:gvirtualnorm_bandit}
\end{align}
\end{lemma}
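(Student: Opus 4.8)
The plan is to mirror the proof of Lemma~\ref{online_op:lemma_virtualbound}, replacing the exact subgradient $\partial[g_{i,t}]_+$ by the two-point estimator $\hat{\partial}[g_{i,t}]_+$ and $b_{i,t}$ by $\hat{b}_{i,t}$ throughout, while tracking exactly where the estimator bounds from Lemma~\ref{online_op:lemma_gradient} enter and produce the constant $\hat{\varepsilon}_1=F_1+2pF_2R(\mathbb{X})$ and the factor $p$. First I would establish the a priori dual bound $\|\beta_tq_{i,t}\|\le\hat{\varepsilon}_1$ by induction on $t$. The base case is immediate since $q_{i,1}=\bm{0}_{m_i}$. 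For the inductive step, writing the dual update \eqref{online_op:al_q_bandit} as $q_{i,t+1}=[(1-\beta_{t+1}\gamma_{t+1})q_{i,t}+\gamma_{t+1}\hat{b}_{i,t+1}]_+$ and using that $[\cdot]_+$ is nonexpansive (with $[\bm{0}]_+=\bm{0}$) together with $\gamma_{t+1}\beta_{t+1}\le1$ gives $\|q_{i,t+1}\|\le(1-\beta_{t+1}\gamma_{t+1})\|q_{i,t}\|+\gamma_{t+1}\|\hat{b}_{i,t+1}\|$.

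Here lies the one genuinely new point relative to the full-information argument, and the main obstacle. In Lemma~\ref{online_op:lemma_virtualbound} the term $b_{i,t+1}$ was controlled through the subgradient inequality $b_{i,t+1}\le[g_{i,t}(x_{i,t+1})]_+$, which relied on $\partial[g_{i,t}(x_{i,t})]_+$ being a true subgradient of the convex map $[g_{i,t}]_+$. Since $\hat{\partial}[g_{i,t}(x_{i,t})]_+$ is only a randomized estimator and not a subgradient, that monotonicity step is unavailable, so the clean recursion through a clipped constraint value fails. Instead I would bound $\hat{b}_{i,t+1}$ directly in norm:
\begin{align*}
\|\hat{b}_{i,t+1}\|&\le\|[g_{i,t}(x_{i,t})]_+\|+\|\hat{\partial}[g_{i,t}(x_{i,t})]_+\|\,\|x_{i,t+1}-x_{i,t}\|\le F_1+pF_2\cdot2R(\mathbb{X})=\hat{\varepsilon}_1,
\end{align*}
invoking \eqref{online_op:ftgtupper-b}, the estimator bound $\|\hat{\partial}[g_{i,t}(x)]_+\|\le pF_2$ from \eqref{dbco:lemma_regretdeltaequ:gsmooth4}, and $\|x_{i,t+1}-x_{i,t}\|\le2R(\mathbb{X})$ from \eqref{online_op:domainupper}. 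This is precisely where the factor $p$ and the constant $\hat{\varepsilon}_1$ originate. Feeding this bound and the monotonicity of $\{\beta_t\}$ back into the recursion closes the induction, giving $\|q_{i,t+1}\|\le\hat{\varepsilon}_1/\beta_{t+1}$.

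For the main inequality I would then expand $\|q_{i,t}-\mu_i\|^2$ exactly as in \eqref{online_op:gvirtualnorm_pf}: using $\mu_i=[\mu_i]_+$ (since $\mu_i\in\mathbb{R}^{m_i}_+$) and nonexpansiveness of $[\cdot]_+$, then squaring out $(1-\beta_t\gamma_t)q_{i,t-1}+\gamma_t\hat{b}_{i,t}-\mu_i$ and splitting $\mu_i^\top\hat{b}_{i,t}=\mu_i^\top[g_{i,t-1}(x_{i,t-1})]_++\mu_i^\top(\hat{\partial}[g_{i,t-1}(x_{i,t-1})]_+)^\top(x_{i,t}-x_{i,t-1})$. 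Three terms remain to be controlled. The quadratic term obeys $\|\hat{b}_{i,t}-\beta_tq_{i,t-1}\|\le\|\hat{b}_{i,t}\|+\|\beta_tq_{i,t-1}\|\le2\hat{\varepsilon}_1$ (using $\|\hat{b}_{i,t}\|\le\hat{\varepsilon}_1$ and the dual bound with $\beta_t\le\beta_{t-1}$), so $\gamma_t^2\|\hat{b}_{i,t}-\beta_tq_{i,t-1}\|^2\le4\hat{\varepsilon}_1^2\gamma_t^2$. The term $-2\beta_t\gamma_t(q_{i,t-1}-\mu_i)^\top q_{i,t-1}$ is bounded by $\beta_t\gamma_t(\|\mu_i\|^2-\|q_{i,t-1}-\mu_i\|^2)$ exactly as in \eqref{online_op:qmu3}. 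The estimator cross-term is bounded by $2\gamma_t pF_2\|\mu_i\|\|x_{i,t}-x_{i,t-1}\|$, again through \eqref{dbco:lemma_regretdeltaequ:gsmooth4}, which is what converts the full-information $F_2$ into $pF_2$ in the statement. Dividing by $2\gamma_t$ and recognizing $\Delta_{i,t}(\mu_i)$ yields \eqref{online_op:gvirtualnorm_bandit}. Apart from the induction step, every manipulation is a verbatim transcription of the full-information proof with the substitutions noted above.
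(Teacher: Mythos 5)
Your proof is correct and takes essentially the same route as the paper's: the paper likewise abandons the componentwise bound \eqref{online_op:qg} in favor of the direct norm bound $\|\hat{b}_{i,t+1}\|\le\hat{\varepsilon}_1$ (its \eqref{online_op:qg_bandit}), runs the same induction to obtain $\|\beta_t q_{i,t}\|\le\hat{\varepsilon}_1$, and then repeats the expansion from Lemma~\ref{online_op:lemma_virtualbound} with \eqref{online_op:gvirtualnorm_pf2} and \eqref{online_op:qmu2} replaced by their bandit counterparts, yielding the constants $2\hat{\varepsilon}_1^2\gamma_t$ and $pF_2$ exactly as you derive them. Your remark that the subgradient monotonicity step $b_{i,t+1}\le[g_{i,t}(x_{i,t+1})]_+$ is unavailable for the randomized estimator correctly identifies the one substantive change, which is precisely the substitution the paper makes.
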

\begin{proof}
From \eqref{online_op:domainupper}, \eqref{online_op:ftgtupper-b}, and \eqref{dbco:lemma_regretdeltaequ:gsmooth4}, we have
\begin{align}
\|\hat{b}_{i,t+1}\|\le \hat{\varepsilon}_{1}.\label{online_op:qg_bandit}
\end{align}

Replacing \eqref{online_op:qg} by \eqref{online_op:qg_bandit} and  following steps similar to those used to prove \eqref{online_op:lemma_virtualboundeqy} yields
\begin{align}
\|\beta_tq_{i,t}\|&\le \hat{\varepsilon}_{1}.\label{online_op:lemma_virtualboundeqy_bandit}
\end{align}

From \eqref{online_op:qg_bandit} and \eqref{online_op:lemma_virtualboundeqy_bandit}, we have
\begin{align}\label{online_op:gvirtualnorm_pf2_bandit}
\|\hat{b}_{i,t}-\beta_tq_{i,t-1}\|\le 2\hat{\varepsilon}_{1}.
\end{align}

From \eqref{dbco:lemma_regretdeltaequ:gsmooth4}, we have
\begin{align}
&-2\gamma_t\mu_i^\top(\hat{\partial} [g_{i,t-1}(x_{i,t-1})]_+)^\top(x_{i,t}-x_{i,t-1})
\le2\gamma_tpF_2\|\mu_i\|\|x_{i,t}-x_{i,t-1}\|.\label{online_op:qmu2_bandit}
\end{align}

Replacing \eqref{online_op:gvirtualnorm_pf2} and \eqref{online_op:qmu2} by \eqref{online_op:gvirtualnorm_pf2_bandit} and \eqref{online_op:qmu2_bandit}, respectively, and following steps similar to those used to prove \eqref{online_op:gvirtualnorm} yields \eqref{online_op:gvirtualnorm_bandit}.
\end{proof}

\begin{lemma}\label{online_op:lemma_regretdelta_bandit}
Suppose Assumptions~\ref{online_op:assfunction}--\ref{online_op:assgraph} hold. For all $i\in[n]$, let $\{x_{i,t}\}$ be the sequences generated by Algorithm \ref{online_op:algorithm_bandit} and $\{y_{t}\}$ be an arbitrary sequence in $\mathbb{X}$, then
\begin{align}\label{online_op:lemma_regretdeltaequ_bandit}
\frac{1}{n}\sum_{i=1}^nf_{t}(x_{i,t})
-f_{t}(y_t)
&\le \frac{1}{n}\sum_{i=1}^nq_{i,t}^\top ([g_{i,t}(y_{t})]_+-\mathbf{E}_{\mathfrak{U}_{t}}[\hat{b}_{i,t+1}])
-\frac{1}{n}\sum_{i=1}^n\frac{\mathbf{E}_{\mathfrak{U}_{t}}[\|\epsilon^x_{i,t}\|^2]}{2\alpha_{t+1}}
\nonumber\\
&\quad+\frac{1}{n}\sum_{i=1}^nF_2(2\|x_{i,t}-\bar{x}_{i,t}\|
+p\mathbf{E}_{\mathfrak{U}_{t}}[\|x_{i,t}-x_{i,t+1}\|])\nonumber\\
&\quad+\frac{1}{ 2n\alpha_{t+1}}\sum_{i=1}^n\mathbf{E}_{\mathfrak{U}_{t}}[\|\hat{y}_t-z_{i,t+1}\|^2
-\|\hat{y}_{t+1}-z_{i,t+2}\|^2\nonumber\\
&\quad+\|\hat{y}_{t+1}-x_{i,t+1}\|^2-\|\hat{y}_t-x_{i,t+1}\|^2]\nonumber\\
&\quad+\frac{1}{n}\sum_{i=1}^nF_2(R(\mathbb{X})\xi_{t}+ \delta_{t})(\|q_{i,t}\|+1).
\end{align}
\end{lemma}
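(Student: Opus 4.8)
The plan is to mirror the proof of Lemma~\ref{online_op:lemma_regretdelta} step by step, replacing the true subgradients by the two-point estimators, shrinking the comparator to $\hat{y}_t=(1-\xi_t)y_t$, and taking the conditional expectation $\mathbf{E}_{\mathfrak{U}_t}$ at the right places. Throughout I will use that $x_{i,t}$, $q_{i,t}$, and $z_{i,t+1}$ are $\mathcal{U}_{t-1}$-measurable, hence deterministic under $\mathbf{E}_{\mathfrak{U}_t}$, whereas $x_{i,t+1}$, $\hat{\omega}_{i,t+1}$, and the estimators depend on $\mathfrak{U}_t$. The unbiasedness identities \eqref{dbco:lemma_regretdeltaequ:fsmooth1} and \eqref{dbco:lemma_regretdeltaequ:gsmooth1} are what let me pass from the gradients of the smoothed functions $\hat{f}_{i,t}$ and $[\hat{g}_{i,t}]_+$ (for which convexity inequalities hold by Lemma~\ref{online_op:lemma_gradient}) to expectations of inner products with the estimators actually used by Algorithm~\ref{online_op:algorithm_bandit}.

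First I would reproduce \eqref{online_op:lxit} verbatim, since it uses only the Lipschitz bound \eqref{online_op:assfunction:functionLipf} and is unaffected by the randomness, giving the consensus term $\frac{2F_2}{n}\sum_i\|x_{i,t}-\bar{x}_t\|$. Then, for each agent, I split $f_{i,t}(x_{i,t})-f_{i,t}(y_t)$ through $\hat{y}_t$: the shrinkage costs $|f_{i,t}(\hat{y}_t)-f_{i,t}(y_t)|\le F_2\xi_t\|y_t\|\le F_2R(\mathbb{X})\xi_t$ by \eqref{online_op:assfunction:functionLipf}, while the smoothing estimate \eqref{dbco:lemma_regretdeltaequ:fsmooth2} gives $f_{i,t}(x_{i,t})-f_{i,t}(\hat{y}_t)\le \hat{f}_{i,t}(x_{i,t})-\hat{f}_{i,t}(\hat{y}_t)+F_2\delta_t$. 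These two residuals supply the ``$+1$'' half of the final additive term $F_2(R(\mathbb{X})\xi_t+\delta_t)(\|q_{i,t}\|+1)$.

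Next I bound $\hat{f}_{i,t}(x_{i,t})-\hat{f}_{i,t}(\hat{y}_t)\le\langle\partial\hat{f}_{i,t}(x_{i,t}),x_{i,t}-\hat{y}_t\rangle=\mathbf{E}_{\mathfrak{U}_t}[\langle\hat{\partial}f_{i,t}(x_{i,t}),x_{i,t}-\hat{y}_t\rangle]$ by convexity and \eqref{dbco:lemma_regretdeltaequ:fsmooth1}, and split $x_{i,t}-\hat{y}_t=(x_{i,t}-x_{i,t+1})+(x_{i,t+1}-\hat{y}_t)$. The first piece is controlled by $\|\hat{\partial}f_{i,t}(x_{i,t})\|\le pF_2$ from \eqref{dbco:lemma_regretdeltaequ:fsmooth3}, producing $pF_2\,\mathbf{E}_{\mathfrak{U}_t}[\|x_{i,t}-x_{i,t+1}\|]$; this is exactly where the factor $p$ replaces the $F_2$ of the full-information case. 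For the second piece I substitute $\hat{\partial}f_{i,t}(x_{i,t})=\hat{\omega}_{i,t+1}-\hat{\partial}[g_{i,t}(x_{i,t})]_+q_{i,t}$ as in \eqref{online_op:fxy1}, apply the projection inequality \eqref{dbco:lemma:projection:xy} to the update \eqref{online_op:al_x_bandit} with $\mathbb{K}=(1-\xi_{t+1})\mathbb{X}$ and comparator $\hat{y}_t$, and reuse the telescoping/Jensen step of \eqref{online_op:omgea2}; this is legitimate because $W_{t+1}$ is doubly stochastic and, as $\{\xi_t\}$ is non-increasing, $\hat{y}_t\in(1-\xi_t)\mathbb{X}\subseteq(1-\xi_{t+1})\mathbb{X}$. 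The remaining cross term $\langle\hat{\partial}[g_{i,t}(x_{i,t})]_+q_{i,t},\hat{y}_t-x_{i,t+1}\rangle$ is split at $x_{i,t}$: the part along $x_{i,t}-x_{i,t+1}$ combines with $-q_{i,t}^\top[g_{i,t}(x_{i,t})]_+$ to equal $-q_{i,t}^\top\hat{b}_{i,t+1}$ pointwise, hence $-q_{i,t}^\top\mathbf{E}_{\mathfrak{U}_t}[\hat{b}_{i,t+1}]$ in expectation; the part along $\hat{y}_t-x_{i,t}$ becomes, after $\mathbf{E}_{\mathfrak{U}_t}$ and \eqref{dbco:lemma_regretdeltaequ:gsmooth1}, $\langle\partial[\hat{g}_{i,t}(x_{i,t})]_+q_{i,t},\hat{y}_t-x_{i,t}\rangle\le q_{i,t}^\top[\hat{g}_{i,t}(\hat{y}_t)]_+-q_{i,t}^\top[\hat{g}_{i,t}(x_{i,t})]_+$ by convexity of $[\hat{g}_{i,t}]_+$ and $q_{i,t}\ge\bm{0}$, which I bound back to $q_{i,t}^\top[g_{i,t}(y_t)]_+$ using \eqref{dbco:lemma_regretdeltaequ:gsmooth2}, \eqref{online_op:assfunction:functionLipg}, and the shrinkage estimate, at the cost of $F_2(R(\mathbb{X})\xi_t+\delta_t)\|q_{i,t}\|$.

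Summing over $i\in[n]$, dividing by $n$, and collecting the four telescoping squared-distance terms, the consensus term, the $p$-weighted displacement term, and the two residuals then yields \eqref{online_op:lemma_regretdeltaequ_bandit}. I expect the main obstacle to be the expectation bookkeeping: one must consistently exploit that $x_{i,t},q_{i,t},z_{i,t+1}$ are deterministic given $\mathcal{U}_{t-1}$ so that the unbiasedness identities turn each convexity inequality (stated for $\partial\hat{f}_{i,t}$ and $\partial[\hat{g}_{i,t}]_+$) into an expectation of estimator-based inner products, while simultaneously checking that the smoothing error ($\propto\delta_t$) and the comparator-shrinkage error ($\propto\xi_t$) assemble precisely into the single term $F_2(R(\mathbb{X})\xi_t+\delta_t)(\|q_{i,t}\|+1)$ rather than leaving scattered residuals.
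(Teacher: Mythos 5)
Your proposal is correct and follows essentially the same route as the paper's own proof: the paper likewise mirrors Lemma~\ref{online_op:lemma_regretdelta} by working with the smoothed functions $\hat{f}_{i,t}$ and $[\hat{g}_{i,t}]_+$ and the shrunk comparator $\hat{y}_t=(1-\xi_t)y_t$, uses the unbiasedness identities \eqref{dbco:lemma_regretdeltaequ:fsmooth1} and \eqref{dbco:lemma_regretdeltaequ:gsmooth1} together with the $\mathcal{U}_{t-1}$-measurability of $x_{i,t}$, $q_{i,t}$, $\hat{y}_t$ to pass to estimator-based inner products, applies \eqref{dbco:lemma:projection:xy} to \eqref{online_op:al_x_bandit} with $\hat{y}_t\in(1-\xi_t)\mathbb{X}\subseteq(1-\xi_{t+1})\mathbb{X}$, and assembles the same split of the cross term into $-q_{i,t}^\top\mathbf{E}_{\mathfrak{U}_t}[\hat{b}_{i,t+1}]$ plus the $g$-side convexity step, with the smoothing and shrinkage residuals collected exactly as in the paper's \eqref{dbco:lemma_regretdelta:equ1_g2}--\eqref{dbco:lemma_regretdelta:equ1} into the term $F_2(R(\mathbb{X})\xi_t+\delta_t)(\|q_{i,t}\|+1)$. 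The only (immaterial) difference is bookkeeping order: you split $f_{i,t}(x_{i,t})-f_{i,t}(y_t)$ through $\hat{y}_t$ first, whereas the paper bounds $\hat{f}_{i,t}(\hat{y}_t)-f_{i,t}(y_t)$ directly, which is algebraically the same decomposition.
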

\begin{proof}
From $q_{i,t}\ge{\bf 0}_{m_i}$, $x_{i,t},\hat{y}_{t}\in(1-\xi_t)\mathbb{X}$, \eqref{online_op:domainupper}, \eqref{online_op:assfunction:functionLipg}, and \eqref{dbco:lemma_regretdeltaequ:gsmooth2}, we have
\begin{align}
q_{i,t}^\top [\hat{g}_{i,t}(x_{i,t})]_+&\ge q_{i,t}^\top [g_{i,t}(x_{i,t})]_+,\label{dbco:lemma_regretdelta:equ1_g2}\\
q_{i,t}^\top[\hat{g}_{i,t}(\hat{y}_{t})]_+&\le q_{i,t}^\top[g_{i,t}(\hat{y}_{t})]_++ F_2\delta_{t}\|q_{i,t}\|\nonumber\\
&=q_{i,t}^\top([g_{i,t}(y_{t})]_++[g_{i,t}(\hat{y}_{t})]_+-[g_{i,t}(y_{t})]_+)+ F_2\delta_{t}\|q_{i,t}\|\nonumber\\
&\le q_{i,t}^\top[g_{i,t}(y_{t})]_++F_2\|\hat{y}_{t}-y_{t}\|\|q_{i,t}\|+ F_2\delta_{t}\|q_{i,t}\|\nonumber\\
&\le q_{i,t}^\top[g_{i,t}(y_{t})]_++F_2(R(\mathbb{X})\xi_{t}+ \delta_{t})\|q_{i,t}\|.\label{dbco:lemma_regretdelta:equ1_g}
\end{align}

From \eqref{online_op:domainupper}, \eqref{online_op:assfunction:functionLipf}, and \eqref{dbco:lemma_regretdeltaequ:fsmooth2}, we have
\begin{align}
\hat{f}_{i,t}(\hat{y}_{t})-f_{i,t}(y_{t})
&=f_{i,t}(\hat{y}_{t})-f_{i,t}(y_{t})
+\hat{f}_{i,t}(\hat{y}_{t})-f_{i,t}(\hat{y}_{t})\nonumber\\
&\le F_2\|\hat{y}_{t}-y_{t}\|+\hat{f}_{i,t}(\hat{y}_{t})-f_{i,t}(\hat{y}_{t})\nonumber\\
&\le F_2R(\mathbb{X})\xi_{t}+ F_2\delta_{t}.\label{dbco:lemma_regretdelta:equ1}
\end{align}

From $x_{i,t}\in(1-\xi_t)\mathbb{X}$, \eqref{online_op:lxit}, and \eqref{dbco:lemma_regretdeltaequ:fsmooth2}, we have
\begin{align}\label{online_op:lxit_bandit}
\frac{1}{n}\sum_{i=1}^{n}f_t(x_{i,t})
\le\frac{1}{n}\sum_{i=1}^{n}\hat{f}_{i,t}(x_{i,t})+\frac{2F_2}{n}\sum_{i=1}^{n}
\|x_{i,t}-\bar{x}_{t}\|.
\end{align}

We have
\begin{align}\label{online_op:fxy_bandit}
\hat{f}_{i,t}(x_{i,t})-\hat{f}_{i,t}(\hat{y}_t)&\le\langle\partial \hat{f}_{i,t}(x_{i,t}),x_{i,t}-\hat{y}_t\rangle\nonumber\\
&=\langle\mathbf{E}_{\mathfrak{U}_{t}}[\hat{\partial} f_{i,t}(x_{i,t})],x_{i,t}-\hat{y}_t\rangle\nonumber\\
&=\mathbf{E}_{\mathfrak{U}_{t}}[\langle\hat{\partial}f_{i,t}(x_{i,t}),
x_{i,t}-\hat{y}_t\rangle]\nonumber\\
&=\mathbf{E}_{\mathfrak{U}_{t}}[\langle\hat{\partial}f_{i,t}(x_{i,t}),x_{i,t}-x_{i,t+1}\rangle
+\langle\hat{\partial}f_{i,t}(x_{i,t}),x_{i,t+1}-\hat{y}_t\rangle]\nonumber\\
&\le \mathbf{E}_{\mathfrak{U}_{t}}[pF_2\|x_{i,t}-x_{i,t+1}\|
+\langle\hat{\partial}f_{i,t}(x_{i,t}),x_{i,t+1}-\hat{y}_t\rangle],
\end{align}
where the first inequality holds since $x_{i,t},\hat{y}_t\in(1-\xi_t)\mathbb{X}$ and $\hat{f}_{i,t}(x)$ is convex on $(1-\xi_{t})\mathbb{X}$ as shown in Lemma~\ref{online_op:lemma_gradient}; the first equality holds due to \eqref{dbco:lemma_regretdeltaequ:fsmooth1}; the second equality holds since $x_{i,t}$ and $\hat{y}_t$ are independent of $\mathfrak{U}_{t}$; and the last inequality holds due to \eqref{dbco:lemma_regretdeltaequ:fsmooth3}.

Similar to the way to get \eqref{online_op:fxy1}, from \eqref{online_op:al_bigomega_bandit}, we have
\begin{align}
\langle\hat{\partial}f_{i,t}(x_{i,t}),x_{i,t+1}-\hat{y}_t\rangle
&=\langle\hat{\omega}_{i,t+1},x_{i,t+1}-\hat{y}_t\rangle
+\langle\hat{\partial}[g_{i,t}(x_{i,t})]_+ q_{i,t},\hat{y}_t-x_{i,t}\rangle\nonumber\\
&\quad+\langle\hat{\partial}[g_{i,t}(x_{i,t})]_+ q_{i,t},x_{i,t}-x_{i,t+1}\rangle.\label{online_op:fxy1_bandit}
\end{align}

We have
\begin{align}\label{online_op:gyxdelta_bandit}
\mathbf{E}_{\mathfrak{U}_{t}}[\langle\hat{\partial}[g_{i,t}(x_{i,t})]_+ q_{i,t},\hat{y}_t-x_{i,t}\rangle]
&=\langle\mathbf{E}_{\mathfrak{U}_{t}}[\hat{\partial}[g_{i,t}(x_{i,t})]_+] q_{i,t},\hat{y}_t-x_{i,t}\rangle\nonumber\\
&=\langle\partial[\hat{g}_{i,t}(x_{i,t})]_+ q_{i,t},\hat{y}_t-x_{i,t}\rangle\nonumber\\
&\le q_{i,t}^\top [\hat{g}_{i,t}(\hat{y}_{t})]_+ -q_{i,t}^\top [\hat{g}_{i,t}(x_{i,t})]_+\nonumber\\
&\le q_{i,t}^\top[g_{i,t}(y_{t})]_+-q_{i,t}^\top [g_{i,t}(x_{i,t})]_++F_2(R(\mathbb{X})\xi_{t}+ \delta_{t})\|q_{i,t}\|,
\end{align}
where the first equality holds since $x_{i,t}$, $q_{i,t}$, and $\hat{y}_t$ are independent of $\mathfrak{U}_{t}$; the last equality holds due to \eqref{dbco:lemma_regretdeltaequ:gsmooth1}; the first inequality holds since $q_{i,t}\ge{\bf 0}_{m_i}$, $x_{i,t},\hat{y}_t\in(1-\xi_t)\mathbb{X}$ and $[\hat{g}_{i,t}(x)]_+$ is convex on $(1-\xi_{t})\mathbb{X}$ as shown in Lemma~\ref{online_op:lemma_gradient}; and the last inequality holds due to \eqref{dbco:lemma_regretdeltaequ:gsmooth2}, \eqref{dbco:lemma_regretdelta:equ1_g2}, and \eqref{dbco:lemma_regretdelta:equ1_g}.

Same as the way to get \eqref{online_op:omgea2}, from  (\ref{online_op:al_x_bandit}), \eqref{dbco:lemma:projection:xy}, and $\hat{y}_t\in(1-\xi_t)\mathbb{X}\subseteq(1-\xi_{t+1})\mathbb{X}$ due to $\xi_t\ge\xi_{t+1}$, we have
\begin{align}
\langle\hat{\omega}_{i,t+1},x_{i,t+1}-\hat{y}_t\rangle
&\le\frac{1}{2\alpha_{t+1}}\Big(\|\hat{y}_t-z_{i,t+1}\|^2-\|\hat{y}_{t+1}-z_{i,t+2}\|^2
-\|\epsilon^x_{i,t}\|^2\nonumber\\
&\quad+\sum_{j=1}^n[W_{t+1}]_{ij}\|\hat{y}_{t+1}-x_{j,t+1}\|^2-\|\hat{y}_t-x_{i,t+1}\|^2
\Big).\label{online_op:omgea2_bandit}
\end{align}

Combining (\ref{dbco:lemma_regretdelta:equ1})--(\ref{online_op:omgea2_bandit}), taking expectation in $\mathfrak{U}_{t}$, summing over $i\in[n]$, and dividing by $n$, and using $\sum_{i=1}^n[W_t]_{ij}=1,~\forall t\in\mathbb{N}_+$ yields
 (\ref{online_op:lemma_regretdeltaequ_bandit}).
\end{proof}

\begin{lemma}\label{online_op:theoremreg_bandit}
Suppose Assumptions~\ref{online_op:assfunction}--\ref{online_op:assgraph} hold and $\gamma_{t}\beta_{t}\le1,~t\in\mathbb{N}_+$. For all $i\in[n]$, let $\{x_{i,t}\}$ be the sequences generated by Algorithm~\ref{online_op:algorithm_bandit}. Then, for any comparator sequence $y_{[T]}\in\calX_{T}$,
\begin{align}
\mathbf{E}[\NetReg(\{x_{i,t}\},y_{[T]})]
&\le 2(p+1)F_2\varepsilon_2
+\sum_{t=1}^T(2\hat{\varepsilon}_1^2\gamma_{t}
+10\hat{\varepsilon}_5\alpha_{t})+\frac{2R(\mathbb{X})^2}{\alpha_{T+1}}
+\frac{2R(\mathbb{X})}{\alpha_{T}}P_T\nonumber\\
&\quad-\frac{1}{2n}\sum_{t=1}^T\sum_{i=1}^n\Big(\frac{1}{\gamma_{t}}
-\frac{1}{\gamma_{t+1}}+\beta_{t+1}\Big)\mathbf{E}[\|q_{i,t}\|^2]\nonumber\\
&\quad
+\sum_{t=1}^TF_2(R(\mathbb{X})\xi_{t}+ \delta_{t})\Big(\frac{\hat{\varepsilon}_{1}}{\beta_t}+1\Big)+\sum_{t=1}^T\frac{2R(\mathbb{X})^2(\xi_t-\xi_{t+1})}{\alpha_{t+1}},
\label{online_op:theoremregequ_bandit}\\
\mathbf{E}\Big[\frac{1}{n}\sum_{i=1}^n\Big\|\sum_{t=1}^T[g_{t}(x_{i,t})]_+\Big\|^2\Big]
&\le 2(p+1)n\varepsilon_2F_1F_2T+2\Big(\frac{1}{\gamma_1}
+\sum_{t=1}^T(\beta_{t}+\hat{\varepsilon}_6\alpha_{t}\Big)\Big(nF_1T\nonumber\\
&\quad+\sum_{t=1}^Tn(2\hat{\varepsilon}_1^2\gamma_{t}
+20\hat{\varepsilon}_5\alpha_{t})+\frac{2nR(\mathbb{X})^2}{\alpha_{T+1}}\nonumber\\
&\quad
-\frac{1}{2}\sum_{t=1}^T\sum_{i=1}^n\Big(\frac{1}{\gamma_{t}}
-\frac{1}{\gamma_{t+1}}+\beta_{t+1}\Big)\mathbf{E}[\|q_{i,t}-\hat{\mu}_{ij}^0\|^2]\nonumber\\
&\quad+\sum_{t=1}^TnF_2(R(\mathbb{X})\xi_{t}+ \delta_{t})\Big(\frac{\hat{\varepsilon}_{1}}{\beta_t}+1\Big)\nonumber\\
&\quad+\sum_{t=1}^T\frac{2R(\mathbb{X})^2(\xi_t-\xi_{t+1})}{\alpha_{t+1}}\Big).
\label{online_op:theoremconsequ_bandit}
\end{align}
\end{lemma}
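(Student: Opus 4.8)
The plan is to follow the same three-step architecture used in the proof of Lemma~\ref{online_op:theoremreg}, replacing each full-information ingredient by its bandit counterpart: Lemma~\ref{online_op:lemma_neterror} stays in force (the consensus step \eqref{online_op:al_z_bandit} coincides with \eqref{online_op:al_z}), while Lemma~\ref{online_op:lemma_virtualbound_bandit} and Lemma~\ref{online_op:lemma_regretdelta_bandit} take the places of Lemma~\ref{online_op:lemma_virtualbound} and Lemma~\ref{online_op:lemma_regretdelta}. The two structural novelties are that the one-slot bound \eqref{online_op:lemma_regretdeltaequ_bandit} is now stated against the shrunk comparator $\hat{y}_t=(1-\xi_t)y_t$ rather than $y_t$, and that it carries the extra smoothing/shrinkage error $\frac{1}{n}\sum_i F_2(R(\mathbb{X})\xi_t+\delta_t)(\|q_{i,t}\|+1)$; everything else differs only by the factor $p$ produced by the estimators.

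For part~(i) I would add \eqref{online_op:gvirtualnorm_bandit}, summed over $i$ and divided by $n$, to \eqref{online_op:lemma_regretdeltaequ_bandit}, yielding the bandit analogue of \eqref{online_op:theoremregequ1} in expectation, with $\varepsilon_1\gamma_{t+1}$ replaced by $2\hat{\varepsilon}_1^2\gamma_{t+1}$. The $z$-difference terms telescope exactly as in \eqref{online_op:dyz} to $\tfrac{4R(\mathbb{X})^2}{\alpha_{T+1}}$, since $\hat{y}_t\in\mathbb{X}$ still gives $\|\hat{y}_t-z_{i,t+1}\|\le 2R(\mathbb{X})$. For the comparator-drift terms I would invoke the decomposition $\hat{y}_{t+1}-\hat{y}_t=(1-\xi_{t+1})(y_{t+1}-y_t)+(\xi_t-\xi_{t+1})y_t$, which with $\|y_t\|\le R(\mathbb{X})$ and $\xi_t\ge\xi_{t+1}$ produces $\|\hat{y}_{t+1}-x_{i,t+1}\|^2-\|\hat{y}_t-x_{i,t+1}\|^2\le 4R(\mathbb{X})\|y_{t+1}-y_t\|+4R(\mathbb{X})^2(\xi_t-\xi_{t+1})$; dividing by $2\alpha_{t+1}$ and summing gives precisely the $\tfrac{2R(\mathbb{X})}{\alpha_T}P_T$ and $\sum_t\tfrac{2R(\mathbb{X})^2(\xi_t-\xi_{t+1})}{\alpha_{t+1}}$ terms of \eqref{online_op:theoremregequ_bandit}. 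Setting $\mu_i=\bm{0}_{m_i}$, I lower-bound $\sum_t\Delta_{i,t+1}(\bm{0}_{m_i})$ via \eqref{online_op:delta_zero} and upper-bound the $\tilde{\Delta}$-aggregate by reusing the consensus-error estimates \eqref{online_op:xitxbarT}--\eqref{online_op:xitxtilde2}, where $\varepsilon_5$ is replaced by $\hat{\varepsilon}_5=2F_2\varepsilon_3+p^2\varepsilon_4$ because the step-difference term now carries $pF_2$ from \eqref{dbco:lemma_regretdeltaequ:fsmooth3}--\eqref{dbco:lemma_regretdeltaequ:gsmooth4}, and the $4F_2\varepsilon_2$ boundary constant becomes $2(p+1)F_2\varepsilon_2$. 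Finally the extra error term is summed using the dual bound \eqref{online_op:lemma_virtualboundeqy_bandit}, i.e. $\|q_{i,t}\|\le\hat{\varepsilon}_1/\beta_t$, giving the $\sum_t F_2(R(\mathbb{X})\xi_t+\delta_t)(\hat{\varepsilon}_1/\beta_t+1)$ term.

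Part~(ii) mirrors part~(ii) of Lemma~\ref{online_op:theoremreg} almost verbatim. I would transfer each per-agent constraint value to the averaged iterate through the displacement inequality \eqref{online_op:mug}, sum the per-slot inequality over $j\in[n]$ with $y_t\equiv y$, introduce the quadratic surrogate $h_{ij}(\mu_i)=\mu_i^\top\sum_t[g_{i,t}(x_{j,t})]_+-\tfrac12\|\mu_i\|^2(\tfrac{1}{\gamma_1}+\sum_t(\beta_t+\hat{\varepsilon}_6\alpha_t))$ with $\hat{\varepsilon}_6=40\hat{\varepsilon}_5$, and evaluate it at the maximizer $\hat{\mu}_{ij}^0$ to convert the left-hand side into $\tfrac12\|\sum_t[g_{t}(x_{j,t})]_+\|^2$ over the common normalizer. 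Bounding $\|\hat{\mu}_{ij}^0\|$ through \eqref{online_op:ftgtupper-b} and using $-\NetReg\le F_1T$ from \eqref{online_op:ftgtupper-a} then yields \eqref{online_op:theoremconsequ_bandit}.

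I expect the main obstacle to be the expectation bookkeeping rather than any single estimate. Each one-slot bound in Lemma~\ref{online_op:lemma_regretdelta_bandit} is conditional on $\mathfrak{U}_t$ and the estimators are unbiased only after that conditioning, so I must sum the conditional bounds and take a single outer expectation over $\mathcal{U}_T$, carefully using that $q_{i,t}$, $x_{i,t}$, and $\hat{y}_t$ are $\mathcal{U}_{t-1}$-measurable and hence independent of $\mathfrak{U}_t$ (as recorded after Algorithm~\ref{online_op:algorithm_bandit}). Keeping the $p$-dependence consistent across the $\tilde{\Delta}$, $\hat{\Delta}$, and $\check{\Delta}$ aggregates, so that $\hat{\varepsilon}_5$ and $\hat{\varepsilon}_6$ surface with the correct constants, is the other place where the computation, though routine, must be executed with care.
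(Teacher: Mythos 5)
Your proposal is correct and follows essentially the same route as the paper's proof: the paper likewise establishes the shrunk-comparator drift bound $\|\hat{y}_{t+1}-x_{i,t+1}\|^2-\|\hat{y}_t-x_{i,t+1}\|^2\le 4R(\mathbb{X})\|y_{t+1}-y_t\|+4R(\mathbb{X})^2(\xi_t-\xi_{t+1})$ via the decomposition $\hat{y}_{t+1}-\hat{y}_t=(1-\xi_{t+1})(y_{t+1}-y_t)+(\xi_t-\xi_{t+1})y_t$, and then repeats the steps of Lemma~\ref{online_op:theoremreg} using Lemmas~\ref{online_op:lemma_neterror}, \ref{online_op:lemma_virtualbound_bandit}, and \ref{online_op:lemma_regretdelta_bandit} together with the dual bound \eqref{online_op:lemma_virtualboundeqy_bandit}. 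Your constant bookkeeping ($2\hat{\varepsilon}_1^2\gamma_t$ in place of $\varepsilon_1\gamma_t$, the replacements $\hat{\varepsilon}_5$ and $\hat{\varepsilon}_6$, the boundary term $2(p+1)F_2\varepsilon_2$, and the smoothing/shrinkage term with factor $\hat{\varepsilon}_1/\beta_t+1$) matches the stated bounds exactly.
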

\begin{proof}
From (\ref{online_op:domainupper}) and $\hat{y}_{t}=(1-\xi_t)y_{t}$, we have
\begin{align}\label{online_op:dxy_bandit}
\|\hat{y}_{t+1}-x_{i,t+1}\|^2-\|\hat{y}_t-x_{i,t+1}\|^2
&\le\|\hat{y}_{t+1}-\hat{y}_t\|\|\hat{y}_{t+1}-x_{i,t+1}+\hat{y}_t-x_{i,t+1}\|\nonumber\\
&\le4R(\mathbb{X})\|\hat{y}_{t+1}-\hat{y}_t\|\nonumber\\
&=4R(\mathbb{X})\|(1-\xi_{t+1})(y_{t+1}-y_t)+(\xi_t-\xi_{t+1})y_t\|\nonumber\\
&=4R(\mathbb{X})\|y_{t+1}-y_t\|+4R(\mathbb{X})^2(\xi_t-\xi_{t+1}).
\end{align}

From \eqref{online_op:lemma_virtualboundeqy_bandit}, \eqref{online_op:dxy_bandit}, and Lemmas~\ref{online_op:lemma_neterror}, \ref{online_op:lemma_virtualbound_bandit}, and \ref{online_op:lemma_regretdelta_bandit}, following steps similar to those used to prove Lemma~\ref{online_op:theoremreg}, we know Lemma~\ref{online_op:theoremreg_bandit} holds.
\end{proof}

We are now ready to prove Theorem~\ref{online_op:corollaryreg_bandit}.

\noindent {\bf (i)} Similar to the way to \eqref{online_op:corollaryregequ1_proof}, from \eqref{online_op:stepsize1_bandit}, (\ref{online_op:sequenceupp}), (\ref{online_op:betatgammat}), and \eqref{online_op:theoremregequ_bandit}, we have
\begin{align*}
\mathbf{E}[\NetReg(\{x_{i,t}\},y_{[T]})]
&\le 2(p+1)F_2\varepsilon_2
+\frac{2\hat{\varepsilon}^2_1}{\kappa}T^\kappa+\frac{10\hat{\varepsilon}_5\alpha_0}{(1-\kappa)}T^{1-\kappa}
+\frac{4R(\mathbb{X})^2T^\kappa}{\alpha_0}\nonumber\\
&\quad+F_2(R(\mathbb{X})+r(\mathbb{X}))\Big(\frac{\hat{\varepsilon}_1}{\kappa}T^\kappa+\log(T)\Big)
\nonumber\\
&\quad+\frac{2R(\mathbb{X})^2(2-\kappa)}{\alpha_0(1-\kappa)}+\frac{2R(\mathbb{X})T^\kappa P_T}{\alpha_0},
\end{align*}
which gives \eqref{online_op:corollaryregequ1_bandit}.

\noindent {\bf (ii)} Similar to the way to \eqref{online_op:corollaryconsequ_proof}, from \eqref{online_op:stepsize1_bandit}, (\ref{online_op:sequenceupp}), (\ref{online_op:betatgammat}), and \eqref{online_op:theoremconsequ_bandit}, we have
\begin{align}\label{online_op:corollaryconsequ_proof_bandit}
\mathbf{E}\Big[\Big(\frac{1}{n}\sum_{j=1}^n\Big\|\sum_{t=1}^T[g_{t}(x_{j,t})]_+\Big\|\Big)^2\Big]
&\le 2(p+1)n\varepsilon_2F_1F_2T
+2n\Big(1+\frac{1+\hat{\varepsilon}_6\alpha_0}{1-\kappa}T^{1-\kappa}\Big)
\Big(F_1T\nonumber\\
&\quad +\frac{2\hat{\varepsilon}^2_1}{\kappa}T^\kappa+\frac{20\hat{\varepsilon}_5\alpha_0}{1-\kappa}T^{1-\kappa}
+\frac{4R(\mathbb{X})^2T^\kappa}{\alpha_0}\nonumber\\
&\quad+F_2(R(\mathbb{X})+r(\mathbb{X}))\Big(\frac{\hat{\varepsilon}_1}{\kappa}T^\kappa+\log(T)\Big)\nonumber\\
&\quad+\frac{2R(\mathbb{X})^2(2-\kappa)}{\alpha_0(1-\kappa)}\Big).
\end{align}

Combining \eqref{online_op:corollaryconsequ_proof2} and \eqref{online_op:corollaryconsequ_proof_bandit} yields \eqref{online_op:corollaryconsequ_bandit}.

\subsection{Proof of Theorem~\ref{online_op:corollaryreg_sc_bandit}}\label{online_op:corollaryregproof_sc_bandit}

\noindent {\bf (i)} From Assumption~\ref{online_op:assstrongconvex} and the last part of Lemma~\ref{dbco:lemma:uniformsmoothing}, we know that $\hat{f}_{i,t}$ is strongly convex with constant $\mu>0$ over $(1-\xi_t)\mathbb{X}$. Thus, \eqref{online_op:fxy_bandit} can be replaced by
\begin{align}\label{online_op:fxy_sc_bandit}
\hat{f}_{i,t}(x_{i,t})-\hat{f}_{i,t}(\hat{y}_t)\le \mathbf{E}_{\mathfrak{U}_{t}}[pF_2\|x_{i,t}-x_{i,t+1}\|
+\langle\hat{\partial}f_{i,t}(x_{i,t}),x_{i,t+1}-\hat{y}_t\rangle]-\frac{\mu}{2}\|\hat{y}_t-x_{i,t}\|^2.
\end{align}

Similar to the way to get \eqref{online_op:corollaryregequ1_sc_proof}, from \eqref{online_op:stepsize1_sc_bandit}  and \eqref{online_op:fxy_sc_bandit}, we have
\begin{align}\label{online_op:corollaryregequ1_sc_proof_bandit}
\mathbf{E}[\NetReg(\{x_{i,t}\},\check{x}^*_{T})]
&\le 2(p+1)F_2\varepsilon_2
+\frac{2\hat{\varepsilon}^2_1}{\kappa}T^\kappa+\frac{10\hat{\varepsilon}_5}{1-c}T^{1-c}\nonumber\\
&\quad+4(1+(\varepsilon_7-1)[1-\mu]_+)R(\mathbb{X})^2\nonumber\\
&\quad+F_2(R(\mathbb{X})+r(\mathbb{X}))\Big(\frac{\hat{\varepsilon}_1}{\kappa}
T^\kappa+\log(T)\Big)+\frac{2R(\mathbb{X})^2(2-\kappa)}{\alpha_0(1-\kappa)},
\end{align}
Noting that $\kappa\ge1-c$ due to $c\ge1-\kappa$, from \eqref{online_op:corollaryregequ1_sc_proof_bandit}, we have \eqref{online_op:corollaryregequ1_sc_bandit}.

\noindent {\bf (ii)} Similar to the way to get \eqref{online_op:corollaryconsequ_proof_sc}, from \eqref{online_op:stepsize1_sc_bandit}  and \eqref{online_op:fxy_sc_bandit}, we have
\begin{align}\label{online_op:corollaryconsequ_proof_sc_bandit}
\mathbf{E}\Big[\Big(\frac{1}{n}\sum_{j=1}^n\Big\|\sum_{t=1}^T[g_{t}(x_{j,t})]_+\Big\|\Big)^2\Big]
&\le 2(p+1)n\varepsilon_2F_1F_2T\nonumber\\
&\quad
+2n\Big(1+\frac{1}{1-\kappa}T^{1-\kappa}+\frac{\hat{\varepsilon}_6}{1-c}T^{1-c}\Big)
\Big(F_1T+\frac{2\hat{\varepsilon}^2_1}{\kappa}T^\kappa \nonumber\\
&\quad +\frac{20\hat{\varepsilon}_5}{1-c}T^{1-c}
+4(1+(\varepsilon_7-1)[1-\mu]_+)R(\mathbb{X})^2\nonumber\\
&\quad+F_2(R(\mathbb{X})+r(\mathbb{X}))\Big(\frac{\hat{\varepsilon}_1}{\kappa}
T^\kappa+\log(T)\Big)+\frac{2R(\mathbb{X})^2(2-\kappa)}{\alpha_0(1-\kappa)}\Big).
\end{align}

Noting that $1-\kappa\ge1-c$ due to $c\ge\kappa$, from  \eqref{online_op:corollaryconsequ_proof2} and \eqref{online_op:corollaryconsequ_proof_sc_bandit}, we have \eqref{online_op:corollaryconsequ_sc_bandit}.
\end{document}